\newcommand{\R}{\mathbb{R}}
\newcommand{\s}{\mathbb{S}}
\newcommand{\C}{\mathcal{C}}
\newcommand{\J}{\mathcal{J}}
\newcommand{\F}{\mathcal{F}}
\newcommand{\T}{\mathcal{T}}
\newtheorem{theorem}{Theorem}[section]
\newtheorem{lemma}[theorem]{Lemma}
\newtheorem{proposition}[theorem]{Proposition}
\newtheorem{corollary}[theorem]{Corollary}
\newtheorem{remark}{Remark}[section]
\theoremstyle{definition}
\numberwithin{equation}{section}
  \title{  Improved Beckner's inequality for axially symmetric functions on $\mathbb{S}^n$ }
\author{Changfeng Gui}
\address{Changfeng Gui, The University of Texas at San Antonio, TX, USA.
}
\email{changfeng.gui@utsa.edu}
\author{Yeyao Hu}
\address{Yeyao Hu, School of Mathematics and Statistics, HNP-LAMA, Central South University,
  Changsha, Hunan 410083, P. R. China}
\email{huyeyao@gmail.com }
\author{Weihong Xie}
\address{Weihong Xie, School of Mathematics and Statistics, HNP-LAMA, Central South University,
  Changsha, Hunan 410083, P. R. China}
\email{wh.xie@csu.edu.cn}
\begin{document}
 \begin{abstract}
In this article we present various uniqueness and existence results for Q-curvature type equations with a Paneitz operator on $\s^n$ in axially symmetric function spaces.  In particular, we show  uniqueness results for $n=6, 8$ and improve  the best constant of Beckner's inequality in these dimensions for axially symmetric functions under the constraint that their centers of mass are at the origin. As a consequence, the associated first Szeg\"o limit theorem is also proven for axially symmetric functions.
\end{abstract}

\date{today}

\maketitle

{\bf Key words}: Axial Symmetry,  Beckner's inequality, Q-curvature  equation, Paneitz operator, Conformal metrics, Szeg\"o limit theorem,  Bifurcation
\vskip6mm

{\bf 2010 AMS subject classification}: 35J15, 35J35, 35J60, 53A05, 53C18,  53C21

\vskip6mm

\vskip4mm
{\section{Introduction and statement of  results}}

 In this paper, we  consider the constant  $Q$-curvature-type  equation
    \begin{equation}\label{n-mfe}
    \alpha P_n u+(n-1)!\left(1-\frac{e^{nu}}{\int_{\s^n}e^{nu}dw}\right)=0, \quad \text{on } \mathbb S^n,
    \end{equation}
where $\mathbb S^n$ is the $n$-dimensional sphere  and
  \begin{equation*}
   P_n=\begin{cases}
       \prod_{k=0}^{\frac{n-2}{2}}(-\Delta+k(n-k-1)),&\mbox{ for $n$ even};\\
       \left(-\Delta+\left(\frac{n-1}{2}\right)^2\right)^{1/2}\prod_{k=0}^{\frac{n-3}{2}}(-\Delta+k(n-k-1)),&\mbox{ for $n$ odd}
   \end{cases}
\end{equation*}
 is the Paneitz operator on $\mathbb{S}^n$ and $\alpha$ is a positive constant.  The volume form  $dw$ is  normalized so that $\int_{\mathbb{S}^n} dw=1$.

 The corresponding functional is defined in $H^{\frac{n}{2}}(\s^n)$  by
\begin{equation}\label{functional}
J_\alpha(u)=\frac{\alpha}{2} \int_{\s^n}(P_nu) udw+(n-1)!\int_{\s^n} udw-
\frac{(n-1)!}{n}\ln \int_{\s^n}e^{nu}dw.
\end{equation}

If $\alpha =1$, \eqref{n-mfe}  corresponds to the constant $Q$-curvature equation on $\mathbb{S}^n$. It is shown in \cite{Beckner} that the following Beckner's inequality,  a higher order Moser-Trudinger type inequality, holds
\begin{equation}
\label{beckner}
J_{1} \ge 0, \quad u \in H^{\frac{n}{2}}(\s^n).
\end{equation}
Furthermore, $J_1$ is invariant under the
conformal transformation
$$ u(\xi) \to v(\tau \xi)+  \frac1n ln( |det(d\tau)(\xi)|)$$
where $\tau$ is an element of the conformal group of $\mathbb{S}^n$ and  $ |det(\cdot)|$ is the modulus of the corresponding Jacobian determinant. Equality in \eqref{beckner} is only attained at functions of the form
$$
u(\xi)= -\ln(1- \zeta \cdot \xi)+ C, \quad   C  \in \mathbb{R},
$$
where $\zeta \in B^{n+1}:=\{ \xi\in \mathbb{R}^{n+1}, |\xi| <1\}$.
(See  also  \cite{Chang95}.) In particular, \eqref{n-mfe} with $\alpha=1$ has a family of axially symmetric solutions
\begin{equation*}
  u(\xi)=-\ln(1-a \xi_1), \quad \xi\in \mathbb{S}^n \mbox{ for }a\in (-1, 1).
\end{equation*}

On the other hand,  an improved Aubin-type inequality   has been shown in \cite[Lemma 4.6]{Chang95}: for any $\alpha>1/2$, there exists a constant $C(\alpha)\ge 0$ such that  $J_{\alpha}(u) \ge -C(\alpha)$  provided that   $u$ belongs to  the  set  of  functions  with center of mass at the origin
 \begin{equation*}
   \mathfrak L=\{v\in  H^{\frac{n}{2}}(\s^n):\int_{\mathbb{S}^n}e^{nv}\xi_jdw=0;\ j=1,2,\cdots,n+1\}.
 \end{equation*}

This gives rise to the existence of a minimizer $u_0$ of $J_{\alpha} $ in $ \mathfrak L$, and $u_0$ satisfies
the corresponding Euler-Lagrange equation
 \begin{equation}\label{lagrange}
    \alpha P_n u+(n-1)!\left(1-\frac{e^{nu}}{\int_{\s^n}e^{nu}dw}\right)= \sum_{i=1}^{n+1} a_i \xi_i e^{nu} , \quad \hbox{on} \quad  \mathbb{S}^n
    \end{equation}
  for some constants $a_i, i=1, 2, \cdots n+1$. Furthermore, by exploiting the invariance of $J_1$ under the conformal transformation, \cite[Remarks (3) (ii)  for Cor. 5.4]{Chang95}
 implies  that  the following Kazdan-Warner condition
   \begin{equation}\label{kw}
    \int_{\mathbb{S}^n} \langle\nabla Q, \nabla \xi_i\rangle e^{nu}dw=0,  \quad i=1, 2, \cdots n+1
    \end{equation}
 is also applicable for    the prescribing $Q$-curvature equation
 \begin{equation*}
    P_n u+ (n-1)! -Q e^{nu}=0, \quad \xi \in   \mathbb{S}^n.
    \end{equation*}
 It is an immediate consequence that $a_i=0,  i=1, 2, \cdots n+1$  in \eqref{lagrange}. (See \cite{Wei-Xu}, proof of  Theorem 2.6.) This argument    is reminiscent of that  in \cite[Cor. 2.1]{CY87} on the constant Gaussian curvature type equation,  or the mean field equation on $ \mathbb{S}^2$,
  \begin{equation}\label{gausian}
    -\alpha \Delta u + \left(1-\frac{e^{2u}}{\int_{ \mathbb{S}^2} {e^{2u}}dw}\right)=0, \quad \xi\in  \mathbb{S}^2.
    \end{equation}
  For \eqref{gausian},  there is a vast literature. See, e.g., \cite{CY87}, \cite{GM} and references therein. Moreover,
interested reader is referred to  \cite{Chang97,DHL00,Dj08,Hang16,Hang163,Ly19,Lin98,M06,Wei99} for literature  on equations that have conformal structure.
\par
 In what follows, we shall consider axially symmetric functions that are only dependent on $\xi_1$.  The first aim of this article discusses the classification of axially symmetric solutions for \eqref{n-mfe}
  at the critical parameter $\alpha=\frac1{n+1}$. We have the following theorem.
   \begin{theorem}\label{n}
  If $\alpha=\frac{1}{n+1}$   and $u$ is an axially symmetric solution to \eqref{n-mfe} with $n\ge2$, then $u$ must be constant.
  \end{theorem}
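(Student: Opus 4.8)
The strategy is to exploit the spectral and conformal structure of \eqref{n-mfe} at the critical value $\alpha=\frac1{n+1}$, together with Beckner's inequality \eqref{beckner} and a global compactness argument. First I would normalize: since adding a constant to $u$ leaves \eqref{n-mfe} invariant, assume $\int_{\s^n}u\,dw=0$ and put $\rho:=e^{nu}\big/\int_{\s^n}e^{nu}\,dw$, so that \eqref{n-mfe} becomes $P_nu=(n+1)(n-1)!\,(\rho-1)$. Expanding $u$ in zonal harmonics (Gegenbauer polynomials in $\xi_1$), $P_n$ acts on the degree-$\ell$ component by $\mu_\ell=\Gamma(\ell+n)/\Gamma(\ell)$, so that $\mu_1=n!$ and $\mu_2=(n+1)!$; hence $\alpha=\frac1{n+1}=n!/\mu_2$ is exactly the parameter at which the linearization of \eqref{n-mfe} at a constant degenerates, with one-dimensional kernel spanned by the degree-two harmonic $Z:=\xi_1^2-\frac1{n+1}$ — i.e.\ the first bifurcation value off the constant branch in the axially symmetric class. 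Two elementary observations follow: a nonzero multiple of a single zonal harmonic of degree $\ge2$ cannot solve \eqref{n-mfe} (its exponential creates mass in degrees $\ge4$ that the left side cannot absorb); and multiplying \eqref{n-mfe} by $\langle\nabla u,\nabla\xi_1\rangle$ and integrating by parts — equivalently, invoking the Kazdan--Warner identity \eqref{kw} exactly as in the discussion preceding \eqref{lagrange} — forces the degree-one component of $u$ to vanish, so $u\in\mathfrak L$.

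Next I would prove local rigidity near the constants. Restricted to the constant branch, the bifurcation at $\alpha=\frac1{n+1}$ is transcritical rather than a pitchfork: the reduced one-dimensional bifurcation equation has quadratic coefficient proportional to $\int_{\s^n}Z^3\,dw$, and a short moment computation gives $\int_{\s^n}\bigl(\xi_1^2-\frac1{n+1}\bigr)^3dw=\dfrac{8n(n-1)}{(n+1)^3(n+3)(n+5)}>0$ for every $n\ge2$. By the Crandall--Rabinowitz analysis, near a constant the solution set of \eqref{n-mfe} consists of the constant branch together with a curve crossing it transversally at $\alpha=\frac1{n+1}$; in particular, at that exact parameter value the only axially symmetric solutions near a constant are the constants themselves.

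It then remains to exclude axially symmetric solutions far from the constants, and this is the step I expect to be the main obstacle. Because $u\in\mathfrak L$, the density $\rho$ cannot concentrate at a single pole; combined with the quantization of blow-up mass for the $Q$-curvature equation (at most one ``bubble'' can form and, carrying the full available mass, it would have to sit at a pole, which $\mathfrak L$ excludes), the family of axially symmetric solutions in $\mathfrak L$ at $\alpha=\frac1{n+1}$ is a priori bounded in every $C^k$, hence compact. One then runs a Leray--Schauder degree argument: for $\alpha=1$ the only axially symmetric solution in $\mathfrak L$ is the constant (by the classification of solutions of the constant $Q$-curvature equation, i.e.\ the rigidity case of \eqref{beckner}), a nondegenerate minimum of $J_1$ on this class, so the degree there is $+1$; homotoping $\alpha$ from $1$ down to $\frac1{n+1}$ preserves the degree by compactness, and the local analysis of the previous paragraph shows the degenerate constant accounts for exactly that degree — so no other solution exists, and $u$ must be constant. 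The delicate points are precisely the quantitative blow-up/compactness analysis (especially for $n$ odd, where $P_n$ is nonlocal) and the local degree count at the degenerate constant: since $\alpha=\frac1{n+1}$ is a bifurcation value, every inequality produced from \eqref{n-mfe} and \eqref{beckner} alone turns out to be self-referential, so an independent ingredient — the blow-up analysis, or (for $n$ even, after reducing \eqref{n-mfe} to a second-order ODE in $\xi_1$) an isoperimetric / sphere-covering inequality tailored to the nonlinearity $\rho-1$ at $\alpha=\frac1{n+1}$ — is genuinely needed to close the argument.
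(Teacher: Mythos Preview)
Your proposal contains a genuine gap and also takes a route very different from the paper's.

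\textbf{The gap.} Your argument rests on two pillars that you yourself flag as unfinished: (i) a priori $C^k$-compactness of axially symmetric solutions in $\mathfrak L$ at $\alpha=\frac1{n+1}$, and (ii) a local Leray--Schauder degree computation at the degenerate constant. Neither is actually carried out, and both are problematic. For (i), the quantization/blow-up analysis you invoke is nontrivial already for even $n$ and essentially undeveloped for odd $n$ where $P_n$ is nonlocal; nothing in the paper or in standard references gives this compactness for free at $\alpha=\frac1{n+1}$. For (ii), at a \emph{transcritical} bifurcation value the local degree of the degenerate solution is typically $0$, not $+1$ (two branches with opposite indices collide). So even granting compactness and global degree $+1$, the statement ``the degenerate constant accounts for exactly that degree'' is false as written: a degree-$0$ contribution from the constant would \emph{force}, not exclude, another solution. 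The closing paragraph of your proposal effectively concedes that an independent ingredient is still missing.

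\textbf{What the paper does instead.} The paper's proof is direct and elementary, with no compactness or degree theory at all. Set $G=(1-x^2)u'$, differentiate the equation, and substitute $e^{nu}=\frac{\alpha}{(n-1)!}P_nu+1$ to obtain
\[
(1-x^2)(P_nu)'=n\,P_nu\cdot G+\frac{n!}{\alpha}\,G.
\]
Now expand $u$, $P_nu$, $G$ in Gegenbauer polynomials $C_k^{\frac{n-1}{2}}$ and compare coefficients. Matching the $C_0$-coefficient gives $a_1=0$ (hence $b_0=0$) whenever $\alpha\neq1$; this is your Kazdan--Warner step. The crucial move is then matching the $C_1$-coefficient: after a telescoping computation of $\int P_nu\cdot G\cdot C_1$, one obtains the exact identity
\[
\frac{2(n!)^2}{(n-1)^2(n+3)}\Bigl(n+1-\frac{1}{\alpha}\Bigr)a_2
=\sum_{k\ge 0}\frac{(k+n-1)!\,\prod_{s=1}^{n-1}(k+s)}{(k+1)(2k+n+1)(k+1)!}\,b_{k+1}^{\,2},
\]
with every term on the right nonnegative. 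At $\alpha=\frac1{n+1}$ the left side vanishes, so $b_k=0$ for all $k\ge1$; combined with $b_0=0$ this forces $G\equiv0$ and hence $u$ constant. The whole argument is a two-step coefficient comparison, in the spirit of Osgood--Phillips--Sarnak, and works uniformly for all $n\ge2$, odd or even. This is precisely the ``independent ingredient'' your proposal is searching for, and it replaces rather than supplements the bifurcation/degree scaffolding.
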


  The rest of this paper focuses on  \eqref{n-mfe} with $n$ even in the axially symmetric setting.  We shall show that \eqref{n-mfe}  admits only constant solutions when $\alpha$ belongs to some suitable sub interval in (1/2, 1) for $n=6, 8$.   As a consequence we obtain an improved Aubin-type inequality for axially symmetric functions in $\mathfrak L$. Note that the case $n=4$ has been considered in \cite{Gui20} and similar results are obtained.

 Considering solutions axially symmetric about $\xi_1$-axis and denoting $\xi_1$ by $x$, we can reduce \eqref{n-mfe}  to
 \begin{equation}\label{n-ode}
  \alpha\left(-1\right)^\frac{n}{2}[(1-x^2)^\frac{n}{2} u']^{(n-1)}+(n-1)!- \frac{(n-1)!\sqrt\pi\Gamma\left(\frac{n}{2}\right)}{ \Gamma\left(\frac{n+1}{2}\right)\gamma}e^{nu}=0,
 \end{equation}
 where $\gamma=\int_{-1}^1(1-x^2)^\frac{n-2}{2}e^{nu}$.

One can refer to Section  3, for the detailed derivation of \eqref{n-ode}. By direct computations, we see that the corresponding functional $ I_\alpha(u)$  in $H^{\frac{n}{2}}(-1,1)$ can be expressed  as follows
 \begin{equation*}
 \begin{aligned}
     I_\alpha(u)&= \left(-1\right)^\frac{n}{2}
     \frac{\alpha}2\int_{-1}^1 (1-x^2)^\frac{n-2}{2}[(1-x^2)^\frac{n}{2} u']^{(n-1)}u+(n-1)!\int_{-1}^1 (1-x^2)^\frac{n-2}{2} u\\
     &- \frac{(n-1)!\sqrt\pi\Gamma\left(\frac{n}{2}\right)}{ n\Gamma\left(\frac{n+1}{2}\right)}\ln\left( \frac{\Gamma\left(\frac{n+1}{2}\right)}
     {\sqrt\pi\Gamma\left(\frac{n}{2}\right)}\int_{-1}^1(1-x^2)^\frac{n-2}{2}e^{nu} \right).
 \end{aligned}
\end{equation*}
where $H^{\frac{n}{2}}(-1,1)$ is defined  as the restriction of  $H^{\frac{n}{2}}(\s^n)$ in  the set of functions axially symmetric about $\xi_1$-axis and $\xi_1=x$. Moreover, the set $\mathfrak L$ is replaced  by
\begin{equation}\label{axi-constraint}
\mathfrak L_r=\bigg\{u\in H^2(\mathbb{S}^4):u=u(x) \mbox{ and } \int_{-1}^1x(1-x^2)^\frac{n-2}{2}e^{nu} =0\bigg\}.
\end{equation}

Let
\begin{equation*}
  \alpha^{(6)}=\frac{115 + \sqrt{2851}}{273} \approx 0.6168 \mbox{ and }\alpha^{(8)}=\frac{19}{23} \approx 0.8261.
\end{equation*}
Now we state the main results.
  \begin{theorem}\label{even}
 Let $n=6$ or $8$. If $\alpha^{(n)}\leq \alpha<1$, then  \eqref{n-ode} admits only constant solutions.  As an immediate consequence,  we have
$$\inf_{u\in \mathfrak L_r}I_\alpha(u)=0.
$$
 \end{theorem}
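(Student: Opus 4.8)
The plan is to establish first the classification --- that \eqref{n-ode} admits only constant axially symmetric solutions when $\alpha^{(n)}\le\alpha<1$ --- and then to read off $\inf_{u\in\mathfrak L_r}I_\alpha(u)=0$ as a short corollary. For the corollary: since $0\in\mathfrak L_r$ with $I_\alpha(0)=0$, it suffices to exhibit a constant minimizer. Exactly as recalled in the introduction for the class $\mathfrak L$, the improved Aubin-type inequality \cite[Lemma 4.6]{Chang95} makes $I_\alpha$ bounded below on $\mathfrak L_r$ for $\alpha>1/2$, and the direct method produces a minimizer $u_0\in\mathfrak L_r$. By the principle of symmetric criticality $u_0$ satisfies the Euler--Lagrange equation \eqref{lagrange} in which, by axial symmetry, only $a_1$ can be nonzero; and the Kazdan--Warner obstruction \eqref{kw} --- which applies to \eqref{lagrange} through the conformal invariance of $J_1$, exactly as recalled in the introduction (cf.\ \cite{Chang95,Wei-Xu}) --- forces $a_1=0$ as well. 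Hence $u_0$ solves \eqref{n-mfe}, therefore \eqref{n-ode}, and by the classification $u_0$ is constant, so $\inf_{\mathfrak L_r}I_\alpha=I_\alpha(u_0)=0$.

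For the classification I would first record the spectral data. For axially symmetric $u$ one has $P_nu=(-1)^{n/2}\big[(1-x^2)^{n/2}u'\big]^{(n-1)}$, and $P_n$ acts on the degree-$\ell$ axially symmetric spherical harmonic --- the Gegenbauer polynomial $C_\ell^{(n-1)/2}(x)$ --- by the eigenvalue $\mu_\ell=\ell(\ell+1)\cdots(\ell+n-1)=\Gamma(\ell+n)/\Gamma(\ell)$; in particular $\mu_1=n!$ and $\mu_2=(n+1)!$. Linearizing \eqref{n-ode} at a constant shows that non-constant solutions can branch off only at $\alpha=\alpha_\ell:=n!/\mu_\ell$, a sequence decreasing from $\alpha_1=1$ through $\alpha_2=\tfrac1{n+1}$; thus on the whole interval $(\tfrac1{n+1},1)$ the constant is a non-degenerate solution of Morse index one, the single unstable direction being the $\xi_1$-mode, and the degeneracy at $\alpha=1$ is precisely the conformal one carried by the family $-\ln(1-a\xi_1)$. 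Theorem \ref{even} asserts that, for $n=6,8$, this conformal direction leaves behind no non-constant solution once $\alpha\ge\alpha^{(n)}$.

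The core would be a quantitative rigidity estimate, modelled on the $n=4$ argument of \cite{Gui20}. Suppose $u$ is a non-constant solution of \eqref{n-ode}; normalize $\int_{\s^n}e^{nu}\,dw=1$ (permissible since $u$ and $u+\mathrm{const}$ are simultaneously solutions), and write $\bar u:=\int_{\s^n}u\,dw$ and $u_\ell$ for the degree-$\ell$ component of $u$. Testing \eqref{n-mfe} against $1$, against $u$, and against $C_1,\dots,C_{n/2}$, and using self-adjointness of $P_n$, produces a closed system of scalar identities; for instance, for $\ell\ge1$,
\[
\int_{\s^n}C_\ell\,e^{nu}\,dw=\frac{\alpha\mu_\ell}{(n-1)!}\int_{\s^n}uC_\ell\,dw,\qquad
\int_{\s^n}(P_nu)u\,dw=\frac{(n-1)!}{\alpha}\Big(\int_{\s^n}ue^{nu}\,dw-\bar u\Big).
\]
I would then couple these with Beckner's inequality \eqref{beckner} for $u$, the elementary convexity bounds $\int_{\s^n}ue^{nu}\,dw\ge0$ and $\bar u\le0$, and the exact spectral bounds $\int_{\s^n}(P_nu)u\,dw\ge\mu_1\|u-\bar u\|_2^2$ and $\int_{\s^n}(P_nu)u\,dw\ge\mu_1\|u_1\|_2^2+\mu_2\|u-\bar u-u_1\|_2^2$. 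Eliminating the nonlinear quantities while keeping the modes up to $\ell=n/2$ with their exact eigenvalues $\mu_\ell$ should collapse everything to a single scalar inequality of the shape $Q_n(\alpha)\,\Sigma\le0$, where $\Sigma\ge0$ is a definite weighted combination of the $\|u_\ell\|_2^2$ and $Q_n$ is a polynomial in $\alpha$ --- quadratic for $n=6$, affine for $n=8$ --- whose largest root is exactly $\alpha^{(n)}$. Then $\alpha^{(n)}\le\alpha<1$ forces $Q_n(\alpha)>0$, hence $\Sigma=0$, hence $u$ is constant: contradiction.

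The hard part will be the bookkeeping that makes $Q_n$ emerge with largest root exactly $\alpha^{(n)}$ rather than merely $\alpha_2=\tfrac1{n+1}$ (all the $\ell\le1$ data yields) or $1$ (all that bare Beckner yields): one must carry the $\ell=2,\dots,n/2$ contributions together with the exact remainders in the convexity and Beckner estimates simultaneously, and it is here that the computation becomes genuinely dimension-specific --- which is also why only $n=4,6,8$ appear to be within reach and why $\alpha^{(6)}$ solves a quadratic while $\alpha^{(8)}$ is rational. A minor preliminary to dispatch first is the justification of the integrations by parts: solutions of \eqref{n-mfe} are smooth on $\s^n$ and hence smooth in $x$ up to $x=\pm1$, so $(1-x^2)^{n/2}u'$ vanishes to order $n/2$ at the poles and every boundary term that arises is zero.
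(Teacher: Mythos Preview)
Your argument for the corollary $\inf_{\mathfrak L_r}I_\alpha=0$ is correct and matches what the paper intends.

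For the classification itself, however, the proposal is off track. The paper does \emph{not} proceed by testing \eqref{n-mfe} against finitely many Gegenbauer modes and combining with Beckner's inequality and convexity. Instead it works with the auxiliary function $G(x)=(1-x^2)u'$ and proceeds in three stages:
\begin{enumerate}
\item[(i)] a \emph{pointwise} gradient estimate (Lemma~\ref{Gj-estimate-lemma}): by a maximum-principle argument applied to the differentiated equation, one shows $(-1)^j\big[(1-x^2)^jG\big]^{(2j+1)}\le (2j+1)!/\alpha$ for $0\le j\le n/2-1$, in particular $G'\le 1/\alpha$;
\item[(ii)] \emph{cubic} integral identities: multiplying the differentiated equation \eqref{G-equation} by $G$ and integrating produces terms cubic in $G$ and its derivatives, which are reduced via lengthy integrations by parts (Appendices A and B) to expressions like $\int(1-x^2)^4G'(G'')^2$ and $\int(1-x^2)^3(G')^3$;
\item[(iii)] the pointwise bound from (i) is used to replace one factor in each cubic term by $1/\alpha$, linearizing the estimate to a \emph{quadratic} inequality in the Gegenbauer coefficients of $G$; a spectral analysis of this quadratic form (Propositions~\ref{2/3} and~\ref{21/25}, then the refinements via the quantities $D$ and $E$) yields the thresholds $\alpha^{(6)}$ and $\alpha^{(8)}$.
\end{enumerate}

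The gap in your plan is step (i): you have no replacement for the pointwise gradient estimate, and without it there is no evident way to dominate the exponential nonlinearity. Testing against $C_1,\dots,C_{n/2}$ produces only finitely many identities relating coefficients of $u$ and of $e^{nu}$, while Beckner and the entropy/Jensen bounds give only one further inequality each; this is not enough structure to ``eliminate the nonlinear quantities'' as you propose. Your observation that $\alpha^{(6)}$ solves a quadratic and $\alpha^{(8)}$ is rational is correct, but in the paper these arise from the specific coefficients generated by (i)--(iii), not from mode-testing. The $n=4$ argument in \cite{Gui20} that you invoke as a model already relies on the analogue of Lemma~\ref{Gj-estimate-lemma}, so your sketch does not in fact parallel it.
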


We believe that $J_{1/2}(u) \ge 0$ for $u\in \mathfrak L $, given the similar inequality for $ \mathbb{S}^2$ as shown in \cite{GM}.

Next we  define the following first momentum functionals on $H^{\frac n2}(\mathbb{S}^n)$
\begin{equation*}
\J_\alpha(u)=\frac{\alpha}{2} \int_{\s^n}(P_nu) udw+(n-1)!\int_{\s^n} udw-
\frac{(n-1)!}{2n}\ln\left( \left(\int_{\mathbb{S}^n}e^{nu}dw\right)^2 -\sum_{i=1}^{n+1} (\int_{\mathbb{S}^n}e^{nu} \xi_i dw)^2 \right).
\end{equation*}
Note that $\J_{\alpha} (u)= J_{\alpha}(u)$ when $ u \in \mathfrak L$.

As a consequence of Theorem \ref{even},  we have the following
form of first Sz\"ego limit theorem on $ \mathbb{S}^n$ for axially symmetric functions.

 \begin{theorem}\label{Szego}
Let $n=6$ or $8$, then
$$ \J_{\frac n{n+1}} (u) \ge 0, \quad \forall u \in \{ u\in H^{\frac n2}(\mathbb{S}^n):  u(\xi)=u(\xi_1)\}.
$$
 \end{theorem}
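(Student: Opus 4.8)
I would prove the inequality $\J_{n/(n+1)}(u)\ge0$ (for axially symmetric $u$) by showing that the infimum of $\J_\alpha$ over the axially symmetric subspace of $H^{\frac n2}(\s^n)$ equals $0$, which is $\J_\alpha$ evaluated at constants. The plan is to do this first for every $\alpha$ in the open interval $(\frac n{n+1},1)$ — this is contained in $(\alpha^{(n)},1)$ for $n=6,8$, since $\tfrac67\approx0.857>\alpha^{(6)}$ and $\tfrac89\approx0.889>\alpha^{(8)}$ — and then to recover the endpoint $\alpha=\frac n{n+1}$ by continuity, using that $\J_\alpha(u)=\J_1(u)-(1-\alpha)\tfrac12\int_{\s^n}(P_nu)u\,dw$ is affine in $\alpha$ for each fixed $u$, so $\J_{n/(n+1)}(u)=\lim_{\alpha\downarrow n/(n+1)}\J_\alpha(u)\ge0$.

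So fix $\alpha\in(\frac n{n+1},1)$. One first shows that $\J_\alpha$ attains its infimum over the axially symmetric class at some $u_0$; by the principle of symmetric criticality $u_0$ is then a critical point of $\J_\alpha$ on all of $H^{\frac n2}(\s^n)$, so it solves the Euler--Lagrange equation, which after using axial symmetry to annihilate all but the $\xi_1$-component of the center of mass reads
\begin{equation*}
\alpha P_nu_0+(n-1)!=\frac{(n-1)!\,(m_0-M_1\xi_1)}{m_0^{\,2}-M_1^{\,2}}\,e^{nu_0},\qquad m_0=\int_{\s^n}e^{nu_0}\,dw,\quad M_1=\int_{\s^n}e^{nu_0}\xi_1\,dw .
\end{equation*}
The heart of the argument is to force $M_1=0$. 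Differentiating the conformal invariance of $u\mapsto\tfrac12\int_{\s^n}(P_nu)u\,dw+(n-1)!\int_{\s^n}u\,dw$ along the conformal flow in the $\xi_1$-direction (whose infinitesimal generator is $\nabla\xi_1$, with $\Delta\xi_1=-n\xi_1$) yields two identities valid for all $v\in H^{\frac n2}(\s^n)$: $P_n\xi_1=n!\,\xi_1$ and $\int_{\s^n}(P_nv)\langle\nabla v,\nabla\xi_1\rangle\,dw=0$. Testing the equation above against $\xi_1$ and against $\langle\nabla u_0,\nabla\xi_1\rangle-\xi_1$, and simplifying with these identities together with $\int_{\s^n}e^{nu_0}\langle\nabla u_0,\nabla\xi_1\rangle\,dw=M_1$ and $\int_{\s^n}e^{nu_0}\xi_1\langle\nabla u_0,\nabla\xi_1\rangle\,dw=\tfrac1n[(n+1)T-m_0]$ with $T=\int_{\s^n}e^{nu_0}\xi_1^2\,dw$, one obtains
\begin{equation*}
\alpha n\int_{\s^n}u_0\xi_1\,dw=\frac{M_1(m_0-T)}{m_0^{\,2}-M_1^{\,2}}\qquad\text{and}\qquad(1-\alpha)n^2\int_{\s^n}u_0\xi_1\,dw=\frac{M_1(m_0-T)}{m_0^{\,2}-M_1^{\,2}} .
\end{equation*}
Subtracting gives $[\,n-\alpha(n+1)\,]\int_{\s^n}u_0\xi_1\,dw=0$; since $\alpha\ne\frac n{n+1}$ this forces $\int_{\s^n}u_0\xi_1\,dw=0$, hence $M_1(m_0-T)=0$, and as $m_0-T=\int_{\s^n}e^{nu_0}(1-\xi_1^2)\,dw>0$ we conclude $M_1=0$. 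Thus $u_0$ is balanced and solves $\alpha P_nu_0+(n-1)!\big(1-e^{nu_0}/\!\int_{\s^n}e^{nu_0}\,dw\big)=0$, i.e.\ the reduced equation \eqref{n-ode}; by Theorem \ref{even}, $u_0$ is constant, so $\inf\J_\alpha=\J_\alpha(\mathrm{const})=0$. This proves $\J_\alpha\ge0$ on axially symmetric functions for all $\alpha\in(\frac n{n+1},1)$, and letting $\alpha\downarrow\frac n{n+1}$ completes the proof.

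The step I expect to be the main obstacle is the existence of the minimizer $u_0$, i.e.\ the compactness of minimizing sequences: unlike $J_1$, the functional $\J_\alpha$ is not conformally invariant, and $J_\alpha$ itself is unbounded below, so this does not follow from Beckner's inequality alone. I would handle it by applying the improved Aubin-type inequality of \cite[Lemma 4.6]{Chang95} (valid for $\alpha>\tfrac12$) to a conformally balanced representative of each function, together with the conformal invariance of $J_1$ and the fact that for $\alpha>\frac{n-1}{n}$ (in particular for $\alpha\ge\frac n{n+1}$) the functional $\J_\alpha$ tends to $+\infty$ along every conformal orbit in the $\xi_1$-direction — so that any loss of compactness of a minimizing sequence must be of conformal type and is thereby excluded. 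The remaining ingredients are by comparison routine: the symmetric-criticality reduction, the two Pohozaev-type identities coming from conformal invariance, and the invocation of Theorem \ref{even} to pin down the balanced critical point as a constant.
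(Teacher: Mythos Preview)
Your strategy is genuinely different from the paper's, and the difference is instructive. The paper observes that the functional $\J_{n/(n+1)}$ itself enjoys a conformal invariance (equation~\eqref{invariance}): with the modified conformal rule
\[
v(\xi)=u(\phi_t(\xi))+\tfrac{n+1}{n^2}\ln|\det d\phi_t(\xi)|,
\]
one has $\J_{n/(n+1)}(v)=\J_{n/(n+1)}(u)$. Given any axially symmetric $u$, one chooses $t$ so that $v\in\mathfrak L$; then $\J_{n/(n+1)}(u)=\J_{n/(n+1)}(v)=J_{n/(n+1)}(v)\ge 0$ by Theorem~\ref{even}. No minimization, no Euler--Lagrange equation, no limit in $\alpha$.

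Your variational route (minimize $\J_\alpha$ for $\alpha\in(\tfrac{n}{n+1},1)$, identify the minimizer, pass to the limit) can be made to work, and your Kazdan--Warner computation forcing $M_1=0$ is correct. But the step you flag as the main obstacle---existence of the minimizer---is where your proposal has a real gap. Your sketch relies on the assertion that $\J_\alpha$ tends to $+\infty$ along conformal orbits for $\alpha>\tfrac{n-1}{n}$; this threshold is wrong. In fact
\[
\J_\alpha(u_t)=\J_{n/(n+1)}(u_t)+\tfrac12\bigl(\alpha-\tfrac{n}{n+1}\bigr)\int_{\s^n}(P_nu_t)u_t\,dw
=\J_{n/(n+1)}(u)+\tfrac12\bigl(\alpha-\tfrac{n}{n+1}\bigr)\int_{\s^n}(P_nu_t)u_t\,dw,
\]
so $\J_\alpha(u_t)\to+\infty$ precisely when $\alpha>\tfrac{n}{n+1}$ and $\J_\alpha(u_t)\to-\infty$ when $\alpha<\tfrac{n}{n+1}$; at $\alpha=\tfrac{n}{n+1}$ it is constant along the orbit. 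So the very fact you need (coercivity along conformal orbits) is \emph{equivalent} to the conformal invariance of $\J_{n/(n+1)}$, and establishing it already gives the paper's one-line proof. Likewise, your plan to get a lower bound via Aubin on the balanced representative $v$ only relates $\J_\alpha(u)$ to $J_\alpha(v)$ through this same invariance. Finally, the statement that ``any loss of compactness must be of conformal type'' is a concentration--compactness principle that is itself nontrivial in this setting and is not supplied.

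In short: your argument is not wrong in spirit, but the compactness step is incomplete, and any natural completion passes through the conformal invariance of $\J_{n/(n+1)}$---at which point the detour through minimization, Euler--Lagrange, and $\alpha\downarrow\tfrac{n}{n+1}$ becomes unnecessary. The paper's proof isolates exactly that invariance and uses it directly.
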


Using a bifurcation approach and Theorem \ref{n}-\ref{even}, we can also show the existence of non constant axially symmetric solution for $\alpha \in (\frac1{n+1}, \frac12).$

 \begin{theorem}\label{nontrivial}
For $n\ge 2$, there exists a non constant  solution $u_{\alpha}$ to \eqref{n-ode} for $\alpha \in (\frac1{n+1}, \frac12).$   Moreover, there exists a sequence $\alpha_m \in (\frac1{n+1},\alpha^{(n)}) $ and a sequence of non constant solutions $u_{\alpha_m}, m=1, 2, \cdots$ to \eqref{n-ode}  such that $\alpha_m \to \frac12$,  $\int_{-1}^1(1-x^2)^\frac{n-2}{2}e^{nu_{\alpha_m}} = \frac{\sqrt\pi\Gamma\left(\frac{n}{2}\right)}{\Gamma\left(\frac{n+1}{2}\right)} $ and  $\|u_{\alpha_m}\|_{L^\infty([-1,1])} \to \infty$ as $m \to \infty$.
 \end{theorem}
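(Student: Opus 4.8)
The plan is to run a Crandall--Rabinowitz-type bifurcation analysis on \eqref{n-ode} treating $\alpha$ as the bifurcation parameter and the constant solution $u\equiv 0$ (after normalizing $\gamma$ so that the equation is satisfied) as the trivial branch. First I would rewrite \eqref{n-ode} as $F(\alpha,u)=0$ with $F:\R_{>0}\times H^{n/2}(-1,1)\to (H^{n/2}(-1,1))^*$ (or a suitable pair of spaces making $F$ Fredholm of index zero), where $F(\alpha,u)$ is the left-hand side of \eqref{n-ode} with the integral constraint $\gamma$ substituted in. The linearization of $F$ at the trivial solution in the direction $\phi$ is, up to a positive constant, $\alpha(-1)^{n/2}[(1-x^2)^{n/2}\phi']^{(n-1)} - c_n n\,\phi + (\text{mean-value correction})$, i.e. essentially $\alpha L_n\phi - \lambda\phi$ where $L_n$ is the self-adjoint operator $(-1)^{n/2}[(1-x^2)^{n/2}\cdot']^{(n-1)}$ acting on axially symmetric functions. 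The eigenfunctions of $L_n$ on $\s^n$ restricted to the axial variable are the Gegenbauer polynomials $C_k^{(n-1)/2}(x)$, with eigenvalues $\mu_k = \prod_{j=0}^{(n-2)/2}(k(k+n-1)-j(n-1-j))\cdot$ (appropriate normalization), which are exactly the factors appearing in $P_n$. Hence the kernel of the linearization is nontrivial precisely at the values $\alpha = \alpha_k := (n-1)!/(n\,\mu_k)$ — note $\mu_1$ corresponds to $\xi_1$ itself and gives $\alpha=1/(n+1)$, consistent with Theorem \ref{n}, while the next eigenvalue $\mu_2$ gives a bifurcation point in $(\frac1{n+1},\frac12)$ (one checks $\alpha_2<\frac12$ by direct computation in each dimension, or uniformly).

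At each such $\alpha_k$ (with $k\ge 2$, so the kernel is one-dimensional, spanned by $C_k^{(n-1)/2}$) the transversality condition $\partial_\alpha D_u F \neq 0$ on the kernel holds because $\partial_\alpha$ of the linear operator is just $L_n$, which acts as multiplication by $\mu_k\neq 0$ on the eigenfunction; this is nonzero modulo the range, so Crandall--Rabinowitz applies and yields a local branch of nonconstant solutions emanating from $(\alpha_k, 0)$. To get solutions for the full interval $\alpha\in(\frac1{n+1},\frac12)$ rather than just near bifurcation points, I would invoke global bifurcation (Rabinowitz's alternative): the connected component of nonconstant solutions through $(\alpha_2,0)$ is either unbounded in $\R\times H^{n/2}$ or returns to another bifurcation point on the trivial branch. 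The return points are exactly the $\alpha_k$; using Theorem \ref{n} ($\alpha=\frac1{n+1}$ forces $u$ constant, so the branch cannot reach $\alpha=\frac1{n+1}$ except at the trivial solution) together with a priori bounds excluding blow-up for $\alpha$ bounded away from $\frac12$, one confines the branch and shows it must sweep across the whole parameter window, giving a nonconstant $u_\alpha$ for every $\alpha\in(\frac1{n+1},\frac12)$.

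For the second assertion — the existence of a sequence $\alpha_m\to\frac12$ with $\|u_{\alpha_m}\|_{L^\infty}\to\infty$ — the idea is that $\alpha=\frac12$ is the threshold at which the improved Aubin-type inequality (and hence compactness of the minimization) is known to hold with constant $0$, but below which the functional $I_\alpha$ restricted to $\{\gamma = \sqrt\pi\Gamma(n/2)/\Gamma((n+1)/2)\}$ is unbounded below in a way that forces concentration. Concretely, I would test $I_\alpha$ along the family $u_a(\xi)=-\ln(1-ax_1)+\text{const}$ (the conformal family, adjusted to keep $\gamma$ fixed) as $a\to 1$; the conformal invariance of $J_1$ means $I_\alpha$ along this family behaves like $(\alpha - \frac12)\cdot(\text{something}\to+\infty \text{ or } -\infty)$ plus bounded terms, so for $\alpha$ slightly less than $\frac12$ the infimum of $I_\alpha$ is strictly negative and attained at a nonconstant minimizer whose $L^\infty$ norm must diverge as $\alpha\uparrow\frac12$ (otherwise a subsequential limit would be a nonconstant minimizer at $\alpha=\frac12$, contradicting the critical-threshold inequality). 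One then extracts the stated normalization of $\gamma$ from the scaling freedom and the sequence $\alpha_m$ from a diagonal argument. The main obstacle I anticipate is the global/a priori part: verifying that the Rabinowitz continuum genuinely covers all of $(\frac1{n+1},\frac12)$ and does not, for instance, escape to infinity prematurely at some interior $\alpha$ — controlling this requires a uniform a priori estimate for axially symmetric solutions of \eqref{n-ode} with $\alpha$ in a compact subinterval of $(\frac1{n+1},\frac12)$, which is the delicate ODE/blow-up analysis; the transversality and local bifurcation steps, by contrast, are routine once the Gegenbauer spectral picture is in place.
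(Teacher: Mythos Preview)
Your bifurcation picture has a concrete error that undermines the first half of the argument. The linearization of \eqref{n-ode} at $u\equiv 0$ is $\alpha P_n-n(n-1)!\,\mathrm{Id}$ on mean-zero axially symmetric functions, so the bifurcation values are $\alpha_k=n!/\lambda_k$ with $\lambda_k=\Gamma(n+k)/\Gamma(k)$. This gives $\alpha_1=1$ (the conformal family $-\ln(1-ax)$), $\alpha_2=1/(n+1)$, and $\alpha_3=2/((n+1)(n+2))<1/(n+1)$: there is \emph{no} bifurcation point from the trivial branch strictly inside $(\tfrac1{n+1},\tfrac12)$. Your sentence ``$\mu_1$ corresponds to $\xi_1$ and gives $\alpha=1/(n+1)$, while the next eigenvalue $\mu_2$ gives a bifurcation point in $(\tfrac1{n+1},\tfrac12)$'' mislabels the spectrum and asserts an interior bifurcation that does not exist. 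The paper's mechanism is different: the bifurcation at the \emph{endpoint} $\alpha_2=1/(n+1)$ is shown to be \emph{transcritical} (Theorem~\ref{bifur1}, with $\rho_2'(0)\neq 0$), so one half-branch $\mathcal{B}^-_2$ enters the region $\alpha>1/(n+1)$. Global bifurcation then confines $\mathcal{B}^-_2$ to $\alpha\in(1/(n+1),\alpha^{(n)})$ via Theorem~\ref{even} (uniqueness for $\alpha\ge\alpha^{(n)}$), not merely via Theorem~\ref{n}; connectedness of $\mathcal{B}^-_2$ together with Rabinowitz's alternative is what yields a nonconstant solution for every $\alpha\in(\tfrac1{n+1},\tfrac12)$.

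For the blow-up sequence at $\alpha\to\tfrac12$, your variational proposal has a gap: you invoke ``the critical-threshold inequality'' $I_{1/2}\ge 0$ on $\mathfrak L_r$ to rule out a nonconstant limiting minimizer, but that inequality is only \emph{conjectured} in the paper (see the remark after Theorem~\ref{even}); what is actually proven is $I_\alpha\ge 0$ for $\alpha\ge\alpha^{(n)}>\tfrac12$. The paper instead argues entirely on the branch $\mathcal{B}^-_2$: Proposition~\ref{n-pro} forces every solution into $\mathfrak L_r$, and one has uniform a priori bounds on the branch for $\alpha$ in compact subintervals of $(1/(n+1),\alpha^{(n)})$ that avoid $\tfrac12$ (for $\alpha>\tfrac12$ via the Chang--Yang inequality, for $\alpha<\tfrac12$ via the compactness in $\rho>2(n-1)!$ stated in Theorem~\ref{main-bifur}). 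Since $\mathcal{B}^-_2$ is unbounded in $L^\infty$ but lives over a bounded $\alpha$-interval, the only place the norm can diverge is at $\alpha=\tfrac12$, which produces the sequence $\alpha_m\to\tfrac12$. Your ``main obstacle'' paragraph correctly anticipates that a priori control is the heart of the matter, but the needed input is two-sided (both $\alpha<\tfrac12$ and $\alpha>\tfrac12$), and it is Theorem~\ref{even} rather than the unproven $\alpha=\tfrac12$ inequality that closes the argument.
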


We also establish the following proposition concerning the centers of mass and first order momenta of solutions to \eqref{n-mfe}.
\begin{proposition}\label{n-pro}
  If $u$ solves \eqref{n-mfe}, then
  \begin{equation*}
    \int_{\mathbb{S}^n}  e^{nu}  \xi_idw=0 \quad \mbox{ and } \int_{\mathbb{S}^n}  u  \xi_idw=0, \quad i=1,2,\cdots,n+1,
  \end{equation*}
whenever $\alpha\neq1$.
\end{proposition}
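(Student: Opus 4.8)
The plan is to combine two facts. The first is a ``moment identity'' obtained by testing \eqref{n-mfe} against the coordinate functions $\xi_i$; the second is the Kazdan--Warner condition \eqref{kw}, which I will show forces $\int_{\s^n}e^{nu}\xi_i\,dw=0$ precisely when $\alpha\neq1$. Throughout one uses that solutions of \eqref{n-mfe} are smooth (elliptic regularity for the Paneitz operator $P_n$ after the Moser--Trudinger embedding), that $P_n$ is self-adjoint and annihilates constants, and that each $\xi_i$ is a first spherical harmonic, so that $P_n\xi_i=\mu_1\xi_i$ with $\mu_1>0$: the factor $-\Delta$ occurring in $P_n$ (for both $n$ even and $n$ odd) contributes the eigenvalue $n$, while each remaining factor contributes a positive number, and one also has $-\Delta\xi_i=n\xi_i$.

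First I would derive the moment identity. Multiplying \eqref{n-mfe} by $\xi_i$, integrating over $\s^n$, and using $\int_{\s^n}\xi_i\,dw=0$ together with $\int_{\s^n}(P_nu)\xi_i\,dw=\mu_1\int_{\s^n}u\,\xi_i\,dw$, one obtains
\[
\alpha\,\mu_1\int_{\s^n}u\,\xi_i\,dw=\frac{(n-1)!}{\int_{\s^n}e^{nu}\,dw}\int_{\s^n}e^{nu}\xi_i\,dw,\qquad i=1,\dots,n+1.
\]
Since $\alpha>0$ and $\mu_1>0$, the whole proposition reduces to proving $\int_{\s^n}e^{nu}\xi_i\,dw=0$. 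To prove the latter I would rescale: set $w:=\alpha u$. Because $P_n$ is linear and kills constants, \eqref{n-mfe} becomes $P_n w+(n-1)!=Q\,e^{nw}$ with the smooth positive prescribed curvature
\[
Q:=\frac{(n-1)!}{\int_{\s^n}e^{nu}\,dw}\;e^{\,n(1-\alpha)u}.
\]
Hence the Kazdan--Warner condition \eqref{kw}, applied to the solution $w$ of this prescribed $Q$-curvature equation, gives $\int_{\s^n}\langle\nabla Q,\nabla\xi_i\rangle e^{nw}\,dw=0$. Since $\nabla Q=n(1-\alpha)\,Q\,\nabla u$ and $Q\,e^{nw}=\frac{(n-1)!}{\int_{\s^n}e^{nu}\,dw}\,e^{n(1-\alpha)u}e^{n\alpha u}=\frac{(n-1)!}{\int_{\s^n}e^{nu}\,dw}\,e^{nu}$, this identity collapses to
\[
n(1-\alpha)\,\frac{(n-1)!}{\int_{\s^n}e^{nu}\,dw}\int_{\s^n}e^{nu}\,\langle\nabla u,\nabla\xi_i\rangle\,dw=0.
\]
As $\alpha\neq1$, this yields $\int_{\s^n}e^{nu}\langle\nabla u,\nabla\xi_i\rangle\,dw=0$; integrating by parts and using $-\Delta\xi_i=n\xi_i$ turns the left-hand side into $\int_{\s^n}e^{nu}\xi_i\,dw$. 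Thus $\int_{\s^n}e^{nu}\xi_i\,dw=0$, and substituting back into the moment identity gives $\int_{\s^n}u\,\xi_i\,dw=0$.

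The substantive step is the rescaling $w=\alpha u$: for $\alpha\neq1$ the equation \eqref{n-mfe} is not itself of prescribed $Q$-curvature form, but $w$ does solve such an equation with a genuine (solution-dependent) curvature $Q$, after which \eqref{kw} does all the work; note that $\alpha=1$ is exactly the exponent value that makes $Q$ constant and hence makes the Kazdan--Warner integral degenerate, so the hypothesis $\alpha\neq1$ is used in an essential way. The only remaining care is the bookkeeping that reduces the Kazdan--Warner integral to a multiple of $(1-\alpha)\int_{\s^n}e^{nu}\xi_i\,dw$, together with the routine verification that solutions of \eqref{n-mfe} are regular enough for the integrations by parts and for \eqref{kw} to apply.
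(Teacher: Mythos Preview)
Your proof is correct and follows essentially the same strategy as the paper: test \eqref{n-mfe} against $\xi_i$ to link $\int_{\s^n}u\,\xi_i\,dw$ with $\int_{\s^n}e^{nu}\xi_i\,dw$, recast the equation in prescribed $Q$-curvature form, and apply the Kazdan--Warner condition \eqref{kw}. The only cosmetic difference is that the paper does not rescale: it keeps $u$ and writes \eqref{Pn-mfe} as $P_nu+(n-1)!-Qe^{nu}=0$ with $Q=\frac{(n-1)!}{\alpha}+(n-1)!\bigl(1-\tfrac{1}{\alpha}\bigr)e^{-nu}$, so that $\langle\nabla Q,\nabla\xi_i\rangle e^{nu}$ is a multiple of $\langle\nabla u,\nabla\xi_i\rangle$ \emph{without} the weight $e^{nu}$, yielding $\int_{\s^n}u\,\xi_i\,dw=0$ first; your rescaling $w=\alpha u$ produces the weighted integrand and gives $\int_{\s^n}e^{nu}\xi_i\,dw=0$ first, after which the moment identity closes the argument either way.
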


The remainder of this paper is organized as follows.  First, we give some preliminaries and validate Theorem \ref{n} and Proposition \ref{n-pro} in the study of the case $n\ge 2$  in Section 2.   Section 3 is devoted to the case $n=6$ or $8$ and   the proof of Theorems \ref{even}-\ref{Szego}.
In Section 4,  we carry out a bifurcation analysis of \eqref{n-ode} and its equivalent form,  and prove Theorem \ref{nontrivial}  based on Theorems \ref{n} and \ref{even}.

{\section{Preliminaries and Classification of $\alpha=\frac{1}{n+1}$}}
\vskip 2mm
 \par
 In this section, we state several  preliminaries  which will be needed in the proof of our main results.

Note that the eigenfunctions associated with the Paneitz operator coincide with those associated with the Laplacian. It is natural to introduce Gegenbauer polynomials, see \cite[8.93]{GR}, which can be considered as a family of generalized Legendre polynomials.

 Let us first introduce the Gegenbauer polynomials (see \cite[8.93]{GR}).   Recall that
\begin{equation}\label{Ck}
  C_k^{\frac{n-1}{2}}(x)=\left(\frac{-1}{2}\right)^k\frac{  \Gamma\left(k+n-1\right)\Gamma\left(\frac{n}{2}\right)}{k! \Gamma\left(n-1\right) \Gamma\left(k+\frac{n}{2}\right)} (1-x^2)^{-\frac{n-2}{2}}\frac{d^k}{dx^k}(1-x^2)^{k+\frac{n-2}{2}}
\end{equation}
 is called Gegenbauer polynomial of order $\frac{n-1}{2}$ and degree $k$. Then $ C_k^{\frac{n-1}{2}}$ satisfies
 \begin{equation}\label{Ck-ode}
   (1-x^2)(C_k^{\frac{n-1}{2}})''-nx(C_k^{\frac{n-1}{2}})'+\bar\lambda_kC_k^{\frac{n-1}{2}}=0,\quad \ k=0,1,\cdots,
 \end{equation}
 where $\bar\lambda_k=k(k+n-1)$.

 After some calculations, it is easy to see from \cite{GR}
 that
 \begin{equation}\label{nCk'}
   |(C_k^{\frac{n-1}{2}})'|\leq  \frac{ \Gamma\left(k+n\right)}{  n\Gamma\left(n-1\right) \Gamma(k) }
 \end{equation}
 and
  \begin{equation}\label{ortho}
  \int_{-1}^1(1-x^2)^{\frac{n-2}{2}}C_k^{\frac{n-1}{2}}(x)C_s^{\frac{n-1}{2}}(x)=\begin{cases}
    \frac{\pi(k+n-2)!}{2^{(n-2)}k!(k+\frac{n-1}{2})[\Gamma(\frac{n-1}{2})]^2}
    :=A_n\frac{(k+n-2)!}{k!(k+\frac{n-1}{2})} &\quad k=s;\\
    0   &\quad k\neq s.
  \end{cases}
\end{equation}

Furthermore, we know that
 \begin{equation}\label{PnCk}
  P_n C_k^{\frac{n-1}{2}}=  \lambda_k   C_k^{\frac{n-1}{2}},
\end{equation}
where
\begin{equation}\label{lambdak}
  \lambda_k=\prod_{s=0}^{n-1}(k+s)=\frac{\Gamma(n+k)}{\Gamma(k)}.
\end{equation}
Indeed, for $n$ even,
\begin{equation*}
\begin{aligned}
    \lambda_k&=\prod_{s=0}^{\frac{n-2}{2}}[k(k+n-1) +s(n-s-1)]
=\prod_{s=0}^{\frac{n-2}{2}}(k+s)(k+n-1-s)\\
  &=\prod_{s=0}^{n-1}(k+s).
\end{aligned}
\end{equation*}
The final formula also works when $n$ is odd.

We now prove Proposition \ref{n-pro}.
Since \eqref{n-mfe} is invariant under addition by a constant, we can normalize $u$ so that
 $\int_{\mathbb{S}^4} e^{4u}dw=1$. Then, \eqref{n-mfe}  can  be written as
\begin{equation}\label{Pn-mfe}
    \alpha P_n u=(n-1)!(e^{nu}-1), \quad \xi\in \mathbb{S}^n.
    \end{equation}
 As in \cite{CY87,KW74}, we can ultiply \eqref{Pn-mfe} by $\xi_i,\ i=1,2,\cdots,n+1$ and integrate to get
  \begin{equation*}
  {  \alpha\int_{\mathbb{S}^n}(P_n u)\xi_idw=(n-1)!\int_{\mathbb{S}^n}e^{nu}\xi_idw.}
  \end{equation*}
It is easy to see from \eqref{Ck-ode} and  \eqref{PnCk} that
\begin{equation*}
-\Delta \xi_i=\bar \lambda_1 \xi_i \mbox{ and }   P_n \xi_i=\lambda_1 \xi_i,\quad i=1,2,\cdots,n+1.
\end{equation*}
  We further have
 \begin{equation*}
 { n\alpha\int_{\mathbb{S}^n} u \xi_idw= \int_{\mathbb{S}^n}e^{nu}\xi_idw.}
 \end{equation*}
On the other hand, let
\begin{equation*}
  Q=\frac{(n-1)!}{\alpha}+(n-1)!\left(1-\frac{1}{\alpha}\right) e^{-nu}.
\end{equation*}
Then \eqref{Pn-mfe} can be reduced to
\begin{equation}
  P_n u+ (n-1)! -Q e^{nu}=0
\end{equation}
 As stated  in the Introduction,     the Kazdan-Warner condition \eqref{kw} holds.  It follows from \eqref{kw} that
\begin{equation*}
  0= n!\left(\frac{1}{\alpha}-1\right)\int_{\mathbb{S}^n}\langle\nabla u, \nabla \xi_i\rangle dw=-n!\left(\frac{1}{\alpha}-1\right)\int_{\mathbb{S}^n}  u \Delta \xi_idw=n n!\left(\frac{1}{\alpha}-1\right)\int_{\mathbb{S}^n}  u  \xi_idw.
\end{equation*}
Therefore,
\begin{equation*}
 \int_{\mathbb{S}^n}  u  \xi_idw=0 \quad \mbox{ and }\quad\int_{\mathbb{S}^n}  e^{nu}  \xi_idw=0\ \quad i=1,2,\cdots,n+1
\end{equation*}
whenever  $\alpha\neq1$. Proposition \ref{n-pro} has been proven.

Throughout this paper,   we assume that $u$ is axially symmetric w.r.t. $\xi_1$-axis, i.e.,
 $u=u(\xi_1)$ for $u\in\C^\infty(\s^n)$. We may drop the subscript for simplicity to write
 \begin{equation*}
   u=u(x),\quad x\in(-1,1).
 \end{equation*}

Next, we shall prove  the uniqueness
of axially symmetric solutions when  $\alpha=\frac1{n+1}$ in \eqref{n-mfe} for all $n\ge 2$.

Let
 \begin{equation}\label{decomp-u}
   u=\sum_{k=0}^\infty a_k C_k^{\frac{n-1}{2}}(x).
 \end{equation}
 As previously  discussed,  we can get
\begin{equation}\label{decomp-Pnu}
   P_nu=\sum_{k=0}^\infty \lambda_k a_k C_k^{\frac{n-1}{2}}(x),
\end{equation}

 Now we assume that $u$  is  solution for \eqref{Pn-mfe}   and define
\begin{equation}\label{G}
G(x)=(1-x^2)u'.
\end{equation}
One has
\begin{equation}\label{decomp-G}
 G=\sum_{k=0}^\infty b_kC_k^{\frac{n-1}{2}}(x).
\end{equation}
By the recursive relations of $C_k^{\frac{n-1}{2}}(x)$(\cite[8.939]{GR})
\begin{equation*}
\begin{aligned}
   (1-x^2)(C_k^{\frac{n-1}{2}}(x))'&=2(n-1)C_{k-1}^{\frac{n-1}{2}}(x)-kxC_k^{\frac{n-1}{2}}(x)\\
&=(k+n-1)xC_k^{\frac{n-1}{2}}(x)-(k+1)C_{k+1}^{\frac{n-1}{2}}(x),
\end{aligned}
\end{equation*}
we have
\begin{equation}\label{recursive-C}
    (1-x^2)(C_k^{\frac{n-1}{2}}(x))'=\frac{(k+n-1)(k+n-2)}{2k+n-1} C_{k-1}^{\frac{n-1}{2}}(x)-
    \frac{k(k+1)}{2k+n-1}C_{k-1}^{\frac{n-1}{2}}(x),\quad \mbox{for }k\geq1.
\end{equation}
Therefore,  we see from \eqref{decomp-G} and \eqref{recursive-C} that
\begin{equation}\label{bk}
 b_k=\begin{cases}
   \frac{(k+n)(k+n-1)}{2k+n+1}-\frac{k(k-1)}{2}a_{k-1} &\mbox{for }k\geq1;\\
   \frac{n(n-1)}{n+1}a_1  &\mbox{for }k=0.
\end{cases}
\end{equation}

   Differentiate \eqref{Pn-mfe} w.r.t $x$ and multiply both sides by $(1-x^2)$ to get
   \begin{equation*}
     (1-x^2)(P_nu)'=\frac{n!}{\alpha}e^{nu}(1-x^2)u'.
   \end{equation*}
 Replacing $e^{nu}$ by $\frac{\alpha}{(n-1)!}P_n u+1$, we derive that
 \begin{equation}\label{G-eq}
    (1-x^2)(P_nu)'=n P_nu G+\frac{n!}{\alpha}G.
 \end{equation}
Inspired by Osgood,  Phillips and  Sarnak \cite{Osgood88}, we shall compare the  coefficients in front of  $C_k^{\frac{n-1}{2}}(x)$ in  both sides of \eqref{G-eq}.
It is worthy pointing out that 1-d case is solved by comparing  Fourier coefficients in \cite{Wang17}.
\par
\begin{proof}[Proof of Theorem \ref{n}]
 We first compare the coefficients before $C_0^{\frac{n-1}{2}}(x)$. By \eqref{decomp-Pnu} and \eqref{bk}, we see that
 \begin{equation}\label{decomp-Pnu'}
 \begin{aligned}
      &\quad (1-x)^2(P_nu)'= \frac{n(n-1)}{n+1} \lambda_1a_1 C_0^{\frac{n-1}{2}}(x)\\
      &+\sum_{k=1}^\infty\left[\frac{(k+n)(k+n-1)\lambda_{k+1}}{2k+n+1}
      a_{k+1}-
   \frac{k(k-1)\lambda_{k-1}}{2k+n-3}a_{k-1}\right]C_k^{\frac{n-1}{2}}(x).
 \end{aligned}
\end{equation}
On the other hand,
 multiplying \eqref{G-eq} by $(1-x^2)^{\frac{n-2}{2}}$ and integrating, we obtain
\begin{equation*}
   \frac{2n (n-2)!n!}{n+1} A_n a_1 =n\int_{\s^n}P_nuG+\frac{2n (n-2)!n!}{n+1} A_na_1.
\end{equation*}
Equivalently, we have
\begin{equation}\label{coeffi-C0}
   \frac{2 (n-2)!n!}{n+1}\left(1-\frac{1}{\alpha}\right)a_1=\int_{\s^n}P_n uG.
\end{equation}
It remains to compute $\int_{\s^n}P_n uG$.
It follows from \eqref{decomp-Pnu}, \eqref{decomp-G}, \eqref{bk}  and \eqref{ortho} that
\begin{equation*}
  \begin{aligned}
   &\quad\int_{\s^n}P_nuG =\int_{-1}^1
   (1-x^2)^{\frac{n-2}{2}}\left(\sum_{k=0}^\infty \lambda_k a_kC_k^{\frac{n-1}{2}}(x)\right)
   \sum_{k=0}^\infty b_k C_k^{\frac{n-1}{2}}(x)\\
   &=A_n \sum_{k=1}^\infty \frac{(k+n-2)!}{k!(k+\frac{n-1}{2})}\lambda_k a_kb_k\\
   &=2A_n \sum_{k=1}^\infty\lambda_k  \left[\frac{(k+n)!}{k!(2k+n-1)(2k+n+1)}a_{k+1}a_k -\frac{(k-1)(k+n-2)!}{(k-1)!(2k+n-3)(2k+n-1)}a_{k-1}a_k \right]\\
   &=2A_n \sum_{k=1}^\infty \left[\frac{(k+n-1)!\lambda_{k+1}}{(k-1)!(2k+n-1)(2k+n+1)}a_{k+1}a_k -\frac{(k-1)(k+n-2)!\lambda_k}{(k-1)!(2k+n-3)(2k+n-1)}a_{k-1}a_k \right]\\
   &=0.
  \end{aligned}
\end{equation*}
By \eqref{coeffi-C0}, we conclude  that
\begin{equation}\label{b0}
  \mbox{ if } \alpha\neq1, \mbox{  then }a_1=0\mbox{  and so }b_0=0.
\end{equation}
\par
Then we compare the coefficients in front of $C_1^{\frac{n-1}{2}}(x)$ in \eqref{G-eq}.  More precisely,
\begin{equation}\label{coeffi-C1}
     \int_{-1}^1(1-x^2)^{\frac n2} (P_nu)'C_1^{\frac{n-1}{2}}=n \int_{-1}^1 (1-x^2)^{\frac{n-2}{2}}(P_nu)' P_nu G C_1^{\frac{n-1}{2}}+\frac{n!}{\alpha}\int_{-1}^1 (1-x^2)^{\frac{n-2}{2}}GC_1^{\frac{n-1}{2}}.
\end{equation}
From \eqref{decomp-Pnu'},
 we deduce that
\begin{equation}\label{coeffi-C1-1}
\begin{aligned}
    &\quad \int_{-1}^1(1-x^2)^{\frac n2}(P_nu)'C_1^{\frac{n-1}{2}}(x)= \frac{2 n!(n+1)! }{n+3}
      A_na_{2}.
\end{aligned}
\end{equation}
For the second term of RHS of \eqref{coeffi-C1}, we have
\begin{equation}\label{coeffi-C1-2}
\frac{n!}{\alpha}\int_{-1}^1 (1-x^2)^{\frac{n-2}{2}}GC_1^{\frac{n-1}{2}}=\frac{2 (n!)^2 }{\alpha(n+3)}
      A_na_{2}.
\end{equation}
For the first term of RHS of \eqref{coeffi-C1}, after integration by part,
we obtain
\begin{equation*}
  \begin{aligned}
    &\quad \int_{-1}^1(1-x^2)^{\frac{n-2}{2}}P_nu GC_1^{\frac{n-1}{2}}(x)=-\frac{n-1}{n}\int_{-1}^1P_nu Gd((1-x^2)^{\frac n2})\\
    &=\frac{n-1}{n}\int_{-1}^1(1-x^2)^{\frac{n-2}{2}}\left[(1-x^2)(P_nu)' G+(1-x^2)G'P_nu\right]dx\\
    &:=\frac{n-1}{n}(I+II).
  \end{aligned}
\end{equation*}
By \eqref{decomp-u}-\eqref{recursive-C}, we find
\begin{equation}\label{decomp-G'}
\begin{aligned}
    (1-x^2)G'&=\sum_{k=1}^\infty b_k\left[\frac{(k+n-1)(k+n-2)}{2k+n-1} C_{k-1}^{\frac{n-1}{2}}(x)-
    \frac{k(k+1)}{2k+n-1}C_{k-1}^{\frac{n-1}{2}}(x)\right]\\
    &=\frac{n(n-1)}{n+1} b_1C_0^{\frac{n-1}{2}}(x)+\sum_{k=1}^\infty\left(\frac{(k+n)(k+n-1)}{2k+n-1}b_{k+1}
    -\frac{k(k-1)}{2k+n-3} b_{k-1}\right)C_{k}^{\frac{n-1}{2}}(x).
\end{aligned}
\end{equation}
After some computations,  we deduce from \eqref{decomp-Pnu}, \eqref{ortho},\eqref{decomp-Pnu'} and \eqref{decomp-G'},
\begin{equation*}
\begin{aligned}
I+II&=  \frac{2(n+1)!n!}{n+3} A_n a_2b_1-\frac{2n!n!}{n+3} A_n a_2b_1
\\
&+2A_n\sum_{k=2}^\infty a_{k+1} b_k\lambda_{k+1}\left[\frac{(k+n)!}{(2k+n+1)(2k+n-1)k!}
-\frac{(k+n-1)!}{(2k+n-1)(2k+n+1)k!}\right]\\
 &+2A_n\sum_{k=2}^\infty a_k b_{k+1}\lambda_{k}\left[\frac{(k+n)!}{(2k+n+1)(2k+n-1)k!}
-\frac{(k+n-1)!}{(2k+n-1)(2k+n+1)(k-1)!}\right]\\
&=\frac{2n(n!)^2}{n+3} A_n a_2b_1+2n A_n\sum_{k=2}^\infty \frac{(k+n-1)!\lambda_{k}}{(2k+n+1)(2k+n-1)k!}\left(
\frac{k+n}{k}a_{k+1} b_k+a_k b_{k+1}\right)\\
&= \frac{2n(n-1)n!(n+1)!}{(n+3)(n+5)} A_n a_2^2+2n (n-1)A_n\sum_{k=2}^\infty \frac{\lambda_{k}(k+n)(k+n)!}{(2k+n+1)(2k+n-1)(2k+n-3)kk!}a_{k+1}^2\\
&-2n (n-1)A_n\sum_{k=2}^\infty \frac{\lambda_{k}(k+n+1)!}{(2k+n+1)(2k+n-1)(2k+n-3)(k+1)!}a_ka_{k+2}\\
&=2n (n-1)A_n\sum_{k=2}^\infty \frac{(k+n-1)!}{(2k+n+1)(2k+n-1)(k-1)!}\\
&\times\left(\frac{(k+n-1)\lambda_{k-1}}{(2k+n-3)(k-1)}a_k^2
-\frac{(k+n+1)(k+n)\lambda_{k}}{k(2k+n-3)(k-1)}a_ka_{k+2}\right)\\
&=n (n-1)A_n\sum_{k=0}^\infty \frac{(k+n-1)!\prod_{s=1}^{n-1}(k+s)}{(k+1)(2k+n+1)(k+1)!}b_{k+1}^2.
\end{aligned}
\end{equation*}
Therefore,
\begin{equation}\label{coeffi-C1-3}
\begin{aligned}
\int_{-1}^1(1-x^2)^{\frac{n-2}{2}}P_nu=(n-1)^2A_n\sum_{k=0}^\infty \frac{(k+n-1)!\prod_{s=1}^{n-1}(k+s)}{(k+1)(2k+n+1)(k+1)!}b_{k+1}^2.
  \end{aligned}
\end{equation}
Combining \eqref{coeffi-C1-1}-\eqref{coeffi-C1-3}, we have
\begin{equation*}
 \frac{2 (n!)^2 }{(n-1)^2(n+3)}
      \left(n+1-\frac{1}{\alpha}\right)a_2=\sum_{k=0}^\infty
      \frac{(k+n-1)!\prod_{s=1}^{n-1}(k+s)}{(k+1)(2k+n+1)(k+1)!}b_{k+1}^2.
\end{equation*}
It is easy to see that if $\alpha= \frac1{n+1}$ then $b_k=0$ for $k\ge 1$. Thus, $G\equiv0$ and $u\equiv C$.
\end{proof}

\vskip4mm

{\section{  The case: $n$ is even }}

In this section, we shall show some results  for \eqref{n-mfe} with $n\ge6$ even, which can be regarded as the generalization of  recent results in \cite{Gui20} for $n=4$.

Let $\theta_i,\ i=1,2,\dots,n$ denote the usual angular coordinates on the sphere with
  \begin{equation*}
    \theta_n\in[0,2\pi] \quad \mbox{ and }\theta_i\in[0, \pi],\ i=1,2,\dots,n-1
  \end{equation*}
 and define $x=\cos \theta_1$. Then the metric tensor is
 \begin{equation}\label{metirc}
 g_{ij}=
\left( \begin{matrix}
  (1-x^2)^{-1}  & 0 & 0 &\cdots & 0 \\
  0  & 1-x^2  & 0 &\cdots & 0 \\
    0 & 0 & (1-x^2)\sin^2\theta_2 &\cdots & 0 \\
      \vdots &\vdots  & \vdots &\ddots & \vdots \\
     0 & 0 & 0 &\cdots & (1-x^2)\sin^2\theta_2\dots\sin^2\theta_{n-1}
\end{matrix}
\right )
 \end{equation}
 In what follows, we shall consider axially symmetric functions which only depend  on $x$. For such functions, we have
 \begin{equation}
     \int_{S^n}dw=\frac{\Gamma\left(\frac{n+1}{2}\right)}{2\pi^{\frac{n+1}{2}}}
     \int_{-1}^1\int_0^{\pi}\int_0^{\pi}\cdots\int_0^{2\pi} (1-x^2)^\frac{n-2}{2}\sin^{(n-2)}\theta_2\cdots
     \cdots \sin\theta_{n-1} d\theta_{n}\cdots d\theta_2dx.
 \end{equation}
 Note that
 \begin{equation*}
   \int_{0}^{\frac{\pi}{2}}sin^{s-1}\theta d\theta=2^{s-2}B\left(\frac{s}{2},\frac{s}{2}\right)
   =2^{-1}B\left(\frac{1}{2},\frac{s}{2}\right).
 \end{equation*}
 Then for $k=2,3,\dots,n-1$,
 \begin{equation*}
  \int_{0}^{\pi}sin^{n-k}\theta_k d\theta_k=  B\left(\frac{1}{2},\frac{n-k+1}{2}\right)
  =\frac{\sqrt\pi
  \Gamma\left(\frac{n+1-k}{2}\right)}{\Gamma\left(\frac{n+2-k}{2}\right)}.
 \end{equation*}
 We further have
 \begin{equation*}
     \begin{aligned}
     \int_{S^n}dw&=\frac{\Gamma\left(\frac{n+1}{2}\right)}{\pi^{\frac{n-1}{2}}}\prod_{k=2}^{n-1}\frac{\sqrt\pi
  \Gamma\left(\frac{n+1-k}{2}\right)}{\Gamma\left(\frac{n+2-k}{2}\right)}
     \int_{-1}^1 (1-x^2)^\frac{n-2}{2}dx\\
     &=\frac{\Gamma\left(\frac{n+1}{2}\right)}{\sqrt\pi\Gamma\left(\frac{n}{2}\right)}\int_{-1}^1 (1-x^2)^\frac{n-2}{2}dx\\
     &=\frac{(n-1)!}{2^{n-1}[(n/2-1)!]^2}\int_{-1}^1 (1-x^2)^\frac{n-2}{2}dx.
   \end{aligned}
 \end{equation*}
 for even $n$.
Moreover,
 \begin{equation*}
    \begin{aligned}
      \Delta u&= |g|^{-\frac{1}{2}}\frac{\partial}{\partial x}\left(|g|^{\frac{1}{2}}g^{11}\frac{\partial u}{\partial x}\right)
     =(1-x^2)^{-\frac{n-2}{2}}\frac{\partial}{\partial x}\left[(1-x^2)^\frac{n}{2}\frac{\partial u}{\partial x}\right]\\
     &=(1-x^2)u''-nxu'
    \end{aligned}
 \end{equation*}
 and
\begin{equation}\label{Pn-axial}
  P_nu=\left(-1\right)^\frac{n}{2}[(1-x^2)^\frac{n}{2} u']^{(n-1)}=
  \left(-1\right)^\frac{n}{2}[(1-x^2)^\frac{n-2}{2} \Delta u]^{(n-2)}
\end{equation}
for $u=u(x)$.  Hence, we can transform the original  equation \eqref{n-mfe} on $\mathbb{S}^n$ into an ODE \eqref{n-ode}.

 In the following, we assume that $\alpha<1.$

 Let $G$ be defined as \eqref{G}.
 In view of equation \eqref{n-ode},   we drive
   \begin{equation}\label{G-eq0}
     \alpha(-1)^\frac{n}{2}((1-x^2)^\frac{n-2}{2}G)^{(n-1)}+(n-1)!
     -  \frac{(n-1)!\sqrt\pi\Gamma\left(\frac{n}{2}\right)}{ \Gamma\left(\frac{n+1}{2}\right)\gamma} e^{nu}=0.
   \end{equation}
   By differentiating \eqref{G-eq0}, we further have
   \begin{equation}\label{G-equation}
      (-1)^\frac{n}{2}(1-x^2)^\frac{n}{2} [(1-x^2)^\frac{n-2}{2}G]^{(n)} -\frac{n!}{\alpha} (1-x^2)^\frac{n-2}{2}G-
     (-1)^\frac{n}{2}n(1-x^2)^\frac{n-2}{2}G [(1-x^2)^\frac{n-2}{2}G]^{(n-1)}=0.
   \end{equation}

For simplicity, let
 \begin{equation}\label{Ck-defin}
 \hat C_k^{\frac{n-1}{2}} =\frac{ k! \Gamma\left(n-1\right) }{\Gamma\left(k+n-1\right) }C_k^{\frac{n-1}{2}} \quad   \mbox{ and }\quad d_k=\frac{\Gamma\left(k+n-1\right)}{ k! \Gamma\left(n-1\right) } b_k
 \end{equation}
 and drop the hat and the index ${\frac{n-1}{2}}$ in the notation in later discussion.
 In the following, we will study the above $ C_k $ and $d_k$.

Combining \eqref{decomp-G} and \eqref{b0},
we have the following decomposition using the orthogonal polynomials $C_k, k\ge 1$:
  \begin{equation}\label{decomp-G-1}
 G=\sum_{k=1}^\infty d_k C_k (x).
\end{equation}
Recall that
\begin{equation*}
  \bar\lambda_k=k(k+n-1) \mbox{ and }\lambda_k=\frac{\Gamma(n+k)}{\Gamma(k)}.
\end{equation*}
Note that \eqref{nCk'} and \eqref{ortho} respectively become
\begin{equation}\label{Ck'}
   |C_k'(x)| \leq \frac{\bar\lambda_k}{n},\quad  \forall x\in(-1,1);
 \end{equation}
 and
 \begin{equation}\label{Ck-ortho}
 \int_{-1}^1 (1-x^2)^\frac{n-2}{2} C_k C_l
=\frac{2^{n-1}[(n/2-1)!]^2\bar\lambda_k}{(n+2k-1) \lambda_k}\delta_{kl}.
 \end{equation}

Define $d_1=\beta$
and
\begin{equation*}
t_k^2=\begin{cases}
  d_k^2\int_{-1}^1(1-x^2)^\frac{n-2}{2}C_k^2,\quad &\mbox{ for } k\geq2;\\
 \beta^2\int_{-1}^1(1-x^2)^\frac{n-2}{2}C_1^2,\quad &\mbox{ for } k=1.
\end{cases}
\end{equation*}
It follows from \eqref{Ck-ode} and \eqref{Pn-axial} that
\begin{equation*}
 \left[(1-x^2)^\frac{n}{2} C_k'\right]'=-\bar\lambda_k(1-x^2)^\frac{n-2}{2} C_k
\end{equation*}
and
\begin{equation*}
   [(1-x^2)^\frac{n-2}{2} C_k]^{(n-2)}=(-1)^\frac{n-2}{2}\frac{ \lambda_k}{\bar\lambda_k}C_k.
\end{equation*}
After  direct calculations,  we obtain the following decompositions.

\begin{equation}\label{decomp_G^2}
  \int_{-1}^1(1-x^2)^\frac{n-2}{2} G^2=  \sum_{k=1}^\infty  t_k^2,
\end{equation}

\begin{equation}\label{decomp_G'}
  \int_{-1}^1 (1-x^2)^\frac n2(G')^2= \sum_{k=1}^\infty \bar\lambda_k t_k^2,
\end{equation}
\begin{equation}\label{decomp_Gn-2}
 \int_{-1}^1\left|[(1-x^2)^\frac{n-2}{2} G]^{(\frac{n-2}{2})}\right|^2 =
 \sum_{k=1}^\infty\frac{ \lambda_k}{\bar\lambda_k} t_k^2= \sum_{k=1}^\infty\frac{\Gamma(n+k-1)}{\Gamma(k+1)} t_k^2.
\end{equation}
\par

Next, we shall state some important integral identities which will be used frequently in the proof of the main results.

\begin{lemma}\label{equality}
We establish the following equalities for $G$.
\begin{equation}\label{C1G}
  \int_{-1}^1(1-x^2)^\frac{n-2}2  C_1 G=\frac{\sqrt\pi\Gamma\left(\frac{n}{2}\right)}{2\Gamma\left(\frac{n+3}{2}\right)}\beta,
\end{equation}
 \begin{equation}\label{enu}
 \int_{-1}^1 (1-x^2)^\frac{n}2 \frac{e^{nu}}{\gamma}=\frac{n}{n+1}(1-\alpha\beta),
 \end{equation}
\begin{equation}\label{CkG}
  \int_{-1}^1 (1-x^2)^\frac{n-2}2    C_k G=-\frac{2^{n-1}[(n/2-1)!]^2 }{\alpha\lambda_k}\int_{-1}^1 \frac{e^{nu}}{\gamma}(1-x^2)^\frac{n}2 C_k',\quad k\geq 2,
\end{equation}
\begin{equation}\label{key-id}
   \int_{-1}^1\left|[(1-x^2)^\frac{n-2}{2} G]^{(\frac{n-2}{2})}\right|^2   =\frac{\sqrt\pi(n-2)!\Gamma\left(\frac{n}{2}\right)}{\Gamma\left(\frac{n+3}{2}\right)}
   \left(n+1-\frac{1}{\alpha}\right)\beta.
\end{equation}
\end{lemma}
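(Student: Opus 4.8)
The four identities are of two flavors: \eqref{C1G} and \eqref{enu} are bookkeeping identities that follow directly from the normalizations and from testing the equation against low-degree Gegenbauer polynomials, while \eqref{CkG} and \eqref{key-id} encode the structure of \eqref{G-eq0} projected onto higher modes. I would organize the proof as follows. First, for \eqref{C1G}, since $C_1(x)$ is (a constant multiple of) $x$ after the normalization in \eqref{Ck-defin}, and $G = \sum_{k\ge 1} d_k C_k$ with $d_1 = \beta$, the orthogonality relation \eqref{Ck-ortho} immediately isolates the $k=1$ term; the stated constant is then just the evaluation of $\int_{-1}^1 (1-x^2)^{\frac{n-2}{2}} C_1^2$ via \eqref{Ck-ortho} with $k=1$ (so $\bar\lambda_1 = n$, $\lambda_1 = n!$), rewritten using $\Gamma$-functions. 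For \eqref{enu}, I would multiply \eqref{G-eq0} (or equivalently \eqref{n-ode}) by the appropriate weight and integrate against $C_1 \sim x$; the top-order term $\alpha(-1)^{n/2}((1-x^2)^{\frac{n-2}{2}}G)^{(n-1)}$ integrates by parts against a degree-one polynomial to produce a multiple of $\int (1-x^2)^{\frac{n-2}{2}}C_1 G = \frac{c}{2}\beta$ from \eqref{C1G}, the constant term $(n-1)!$ integrates against $x$ to zero by parity, and the $e^{nu}$ term gives exactly $\int (1-x^2)^{\frac{n}{2}} e^{nu}/\gamma$ up to the explicit prefactor; solving the resulting linear relation yields \eqref{enu}.

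For \eqref{CkG} with $k \ge 2$, the idea is the same testing procedure but now against the weight that extracts $\int (1-x^2)^{\frac{n-2}{2}} C_k G$. Multiply \eqref{G-eq0} by $(1-x^2)^{\frac{n-2}{2}} C_k$ and integrate: the top-order term $\alpha(-1)^{n/2}((1-x^2)^{\frac{n-2}{2}}G)^{(n-1)}$, after $(n-1)$ integrations by parts (boundary terms vanish because $(1-x^2)^{\frac{n-2}{2}}$ and its derivatives vanish to sufficient order at $\pm 1$), becomes $\alpha \int (1-x^2)^{\frac{n-2}{2}} G \cdot [\text{something of degree } k]$; here I would use the two displayed consequences of \eqref{Ck-ode}, namely $[(1-x^2)^{n/2}C_k']' = -\bar\lambda_k (1-x^2)^{\frac{n-2}{2}} C_k$ and $[(1-x^2)^{\frac{n-2}{2}}C_k]^{(n-2)} = (-1)^{\frac{n-2}{2}} \frac{\lambda_k}{\bar\lambda_k} C_k$, to rewrite the integrated-by-parts expression as a multiple of $\lambda_k \int (1-x^2)^{\frac{n-2}{2}} C_k G$. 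The constant $(n-1)!$ term integrates to zero for $k \ge 2$ by orthogonality with $C_0$, and the $e^{nu}$ term, again by moving one derivative off $C_k$ via integration by parts, becomes $-\int \frac{e^{nu}}{\gamma}(1-x^2)^{\frac{n}{2}} C_k'$. Collecting the constants (tracking the $2^{n-1}[(n/2-1)!]^2$ that appears in \eqref{Ck-ortho}) gives \eqref{CkG}.

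Finally, \eqref{key-id} is the key identity and the main obstacle; it identifies the "energy" $\int |[(1-x^2)^{\frac{n-2}{2}}G]^{(\frac{n-2}{2})}|^2$ — which by \eqref{decomp_Gn-2} equals $\sum_k \frac{\lambda_k}{\bar\lambda_k} t_k^2$ — with a single linear expression in $\beta$. The plan is: first, by \eqref{decomp_Gn-2} and the definition of $t_k^2$, write the left side as $\sum_{k\ge 1} \frac{\lambda_k}{\bar\lambda_k} d_k^2 \int (1-x^2)^{\frac{n-2}{2}} C_k^2$ (with $d_1 = \beta$); using $[(1-x^2)^{\frac{n-2}{2}}C_k]^{(n-2)} = (-1)^{\frac{n-2}{2}}\frac{\lambda_k}{\bar\lambda_k}C_k$ this is also $\int P_n$-type pairing $\int (1-x^2)^{\frac{n-2}{2}} G \cdot [\text{a }(n-2)\text{-fold derivative of } (1-x^2)^{\frac{n-2}{2}}G]$ up to sign. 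The strategy is then to pair \eqref{G-eq0} against $(1-x^2)^{\frac{n-2}{2}}$ times the right object so that this energy appears; concretely, one multiplies equation \eqref{G-equation} (the differentiated form) by $G$, or equivalently multiplies \eqref{G-eq0} by a suitable derivative expression, integrates by parts repeatedly, uses \eqref{enu} to handle the term coming from $\int (1-x^2)^{\frac{n}{2}}\frac{e^{nu}}{\gamma}$ and the orthogonality to kill the constant, and is left with $(n+1 - \frac1\alpha)\beta$ times an explicit constant. The delicate points, where I expect to spend the most effort, are (i) getting every boundary term in the repeated integration by parts to vanish — this needs the vanishing orders of $(1-x^2)^{\frac{n-2}{2}}$ at $\pm 1$ to dominate, which is where evenness of $n$ and $n \ge 6$ (or at least $n\ge 2$) is used — and (ii) bookkeeping the numerous $\Gamma$-function and power-of-$2$ constants so that the final prefactor matches $\frac{\sqrt\pi (n-2)!\Gamma(n/2)}{\Gamma((n+3)/2)}$ exactly; I would double-check the latter against the $n=1$ Fourier computation in \cite{Wang17} and the $n=4$ case in \cite{Gui20} as a consistency test. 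Once \eqref{key-id} is in hand the other three identities are essentially immediate corollaries of the normalization and orthogonality, so the write-up would present \eqref{C1G} and \eqref{enu} first, then \eqref{CkG}, then devote the bulk of the argument to \eqref{key-id}.
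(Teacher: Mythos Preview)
Your plan for \eqref{C1G} is correct and matches the paper. For \eqref{enu} and \eqref{CkG}, however, the paper does something slightly different and cleaner than what you describe: instead of testing \eqref{G-eq0} against $(1-x^2)^{\frac{n-2}{2}}C_k$, it tests against the \emph{antiderivative} $\int_{-1}^x (1-s^2)^{\frac{n-2}{2}}C_k(s)\,ds$, which by \eqref{Ck-ode} equals $-\tfrac{1}{\bar\lambda_k}(1-x^2)^{\frac{n}{2}}C_k'(x)$. This is precisely what makes the $e^{nu}$ term produce $\int \tfrac{e^{nu}}{\gamma}(1-x^2)^{\frac{n}{2}}C_k'$ directly; your phrase ``moving one derivative off $C_k$'' does not literally work, since there is no derivative there to move, and rewriting $(1-x^2)^{\frac{n-2}{2}}C_k=-\tfrac{1}{\bar\lambda_k}[(1-x^2)^{\frac{n}{2}}C_k']'$ and integrating by parts lands a derivative on $e^{nu}$, reintroducing $G$. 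Your route can be salvaged, but the paper's antiderivative trick is what makes the constants fall out without pain.

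The genuine gap is in your plan for \eqref{key-id}. You propose multiplying \eqref{G-equation} by $G$; but that yields \eqref{key-equation}, which in addition to the energy $\int\bigl|[(1-x^2)^{\frac{n-2}{2}}G]^{(\frac{n-2}{2})}\bigr|^2$ contains the semi-norm $\lfloor G\rfloor^2$ and, crucially, the \emph{cubic} term $\int (1-x^2)^{\frac{n-2}{2}}G^2\,[(1-x^2)^{\frac{n-2}{2}}G]^{(n-1)}$. This cubic does not reduce to anything linear in $\beta$ --- it is the term that occupies all of Appendices~A and~B even for $n=6,8$ --- so your approach cannot close to \eqref{key-id}. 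The paper's actual move is to multiply \eqref{G-equation} by $x$, not by $G$. Then the two terms linear in $G$ combine to $n!\bigl(n+1-\tfrac{1}{\alpha}\bigr)\int x(1-x^2)^{\frac{n-2}{2}}G$, which by \eqref{C1G} is a constant times $\bigl(n+1-\tfrac{1}{\alpha}\bigr)\beta$; and the nonlinear term $\int x(1-x^2)^{\frac{n-2}{2}}G\,[(1-x^2)^{\frac{n-2}{2}}G]^{(n-1)}$, after integrating by parts down to the middle order (the computation \eqref{G(n/2-1)}), becomes exactly $\tfrac{n-1}{2}\int\bigl|[(1-x^2)^{\frac{n-2}{2}}G]^{(\frac{n-2}{2})}\bigr|^2$. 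Note also that \eqref{enu} is \emph{not} used here; only \eqref{C1G} enters. So the missing idea is: test the differentiated equation against $x$, not against $G$.
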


\begin{proof}
  A direct calculation shows that
\begin{equation*}
    \int_{-1}^1(1-x^2)^\frac{n-2}2  C_1 G= \beta  \int_{-1}^1(1-x^2)^\frac{n-2}2x^2 =\frac{\sqrt\pi\Gamma\left(\frac{n}{2}\right)}{2\Gamma\left(\frac{n+3}{2}\right)}\beta.
\end{equation*}
Then \eqref{C1G} follows.

To prove \eqref{enu} and \eqref{CkG}, multiplying  \eqref{G-eq0}
by $\int_{-1}^x (1-s^2)^\frac{n-2}{2}C_k(s)ds$ with $k\geq 1$
   and integrating over $[-1, 1]$, we have
   \begin{equation*}
   \begin{aligned}
       &\quad \int_{-1}^1\int_{-1}^x (1-s^2)^\frac{n-2}{2}C_k(s)\bigg[     \alpha(-1)^\frac{n}{2}((1-x^2)^\frac{n-2}{2}G)^{(n-1)}+(n-1)!-\frac{2^{n-1}[(n/2-1)!]^2}{\gamma}e^{nu}
       \bigg] =0.
   \end{aligned}
  \end{equation*}
After integrating by parts,     we obtain
\begin{equation}\label{22-1}
\begin{aligned}
 &\quad (-1)^\frac{n}{2} \alpha \int_{-1}^1\int_{-1}^x (1-s^2)^\frac{n-2}{2}C_k(s)((1-x^2)^\frac{n-2}{2}G)^{(n-1)}\\
    & = (-1)^\frac{n+2}{2} \alpha \int_{-1}^1((1-x^2)^\frac{n-2}{2}G)^{(n-2)}(1-x^2)^\frac{n-2}{2}C_k(x)\\
  &= \frac{\alpha\Gamma(n+k-1)}{\Gamma(k+1)}\int_{-1}^1 (1-x^2)^\frac{n-2}{2}C_k(x)G.
\end{aligned}
\end{equation}
Furthermore,
 \begin{equation}\label{22-2}
   \begin{aligned}
    \int_{-1}^1\int_{-1}^x (1-s^2)^\frac{n-2}{2}C_k(s) &= \left(x\int_{-1}^x(1-s^2)^\frac{n-2}{2}C_k(s) \right)\Big|_{-1}^1- \int_{-1}^1 (1-x^2)^\frac{n-2}{2}xC_k  \\
    &=-\frac{\sqrt\pi\Gamma\left(\frac{n}{2}\right)}{2\Gamma\left(\frac{n+3}{2}\right)}\delta_{1k}.
   \end{aligned}
 \end{equation}
By \eqref{Ck-ode}  we find that
\begin{equation}\label{22-3}
     \int_{-1}^x  (1-s^2)^\frac{n-2}{2}C_k(s)=- \frac1{\bar\lambda_k}(1-x^2)^\frac{n}{2} C_k'(x).
\end{equation}
Let $k=1$, then from \eqref{22-1}--\eqref{22-3}  we   deduce that
\begin{equation*}
 \frac{\sqrt\pi(n-1)!\Gamma\left(\frac{n}{2}\right)}{\bar\lambda_1\Gamma\left(\frac{n+1}{2}\right)} \int_{-1}^1 (1-x^2)^\frac{n}2 \frac{e^{nu}}{\gamma} =\frac{\sqrt\pi(n-1)!\Gamma\left(\frac{n}{2}\right)}{2\Gamma\left(\frac{n+3}{2}\right)} (1-\alpha\beta).
\end{equation*}
This leads to \eqref{enu}.

When $k\ge2$,  \eqref{CkG} follows from \eqref{22-1}--\eqref{22-3}. Here we employ the fact that
\begin{equation*}
  \frac{ \Gamma(n+k-1)}{\Gamma(k+1)}=\frac{\lambda_k}{\bar\lambda_k} \mbox{ and }2^{n-1}[(n/2-1)!]^2=\frac{(n-1)!\sqrt\pi\Gamma\left(\frac{n}{2}\right)}{ \Gamma\left(\frac{n+1}{2}\right)}.
\end{equation*}
\par
 For \eqref{key-id}, multiplying   \eqref{G-equation} by $x$ and integrating from $-1$ to $1$, we obtain
 \begin{equation}\label{KI-1}
 \begin{aligned}
    &\quad \int_{-1}^1  \bigg[(-1)^\frac{n}{2}x(1-x^2)^\frac{n}{2} [(1-x^2)^\frac{n-2}{2}G]^{(n)}
     -\frac{n!}{\alpha} x(1-x^2)^\frac{n-2}{2}G\\
     &-(-1)^\frac{n}{2}n x(1-x^2)^\frac{n-2}{2}G [(1-x^2)^\frac{n-2}{2}G]^{(n-1)}\bigg]=0.
 \end{aligned}
 \end{equation}
 By integrating by parts,  we  find
 \begin{equation}\label{KI-2}
 \begin{aligned}
      \int_{-1}^1(-1)^\frac{n}{2}x(1-x^2)^\frac{n}{2} [(1-x^2)^\frac{n-2}{2}G]^{(n)}&= \int_{-1}^1 (-1)^\frac{n}{2}[x(1-x^2)^\frac{n}{2}]^{(n)} [(1-x^2)^\frac{n-2}{2}G] \\
      &= (n+1)!\int_{-1}^1 x(1-x^2)^\frac{n-2}{2}G.\\
 \end{aligned}
\end{equation}
Furthermore,
\begin{equation}\label{G(n/2-1)}
\begin{aligned}
   &\quad (-1)^\frac{n}{2}n\int_{-1}^1[x(1-x^2)^\frac{n-2}{2}G][(1-x^2)^\frac{n-2}{2} G]^{(n-1)}\\
   &=n\int_{-1}^1[(1-x^2)^\frac{n-2}{2} G]^{(\frac{n-2}2)}\left[x((1-x^2)^\frac{n-2}{2}G)^{(\frac{n}2)}+\frac n2((1-x^2)^\frac{n-2}{2} G)^{(\frac{n-2}2)}\right]\\
   &=\frac{n(n-1)}{2}\int_{-1}^1|[(1-x^2)^\frac{n-2}{2} G]^{(\frac{n}{2}-1)}|^2.
\end{aligned}
\end{equation}
It follows from  \eqref{KI-1}--\eqref{G(n/2-1)}   that
\begin{equation*}
   n!\left(n+1-\frac{1}{\alpha}\right)\int_{-1}^1 x(1-x^2)^\frac{n-2}{2}G=\frac{n(n-1)}{2}\int_{-1}^1|[(1-x^2)^\frac{n-2}{2} G]^{(\frac{n}{2}-1)}|^2,
\end{equation*}
which,  joint with  \eqref{C1G},  implies \eqref{key-id}.
\end{proof}

Multiplying \eqref{G-equation} by $G$ and integrating over$[-1,1]$, we have
   \begin{equation}\label{key-equation}
     \begin{aligned}
       \int_{-1}^1&(-1)^\frac{n}{2}(1-x^2)^\frac{n}{2}G [(1-x^2)^\frac{n-2}{2}G]^{(n)} -\frac{n!}{\alpha} \int_{-1}^1(1-x^2)^\frac{n-2}{2}G^2\\
       &- (-1)^\frac{n}{2}n \int_{-1}^1(1-x^2)^\frac{n-2}{2}G^2 [(1-x^2)^\frac{n-2}{2}G]^{(n-1)}=0.
     \end{aligned}
   \end{equation}

For the first term,
\begin{equation*}
     \begin{aligned}
      \int_{-1}^1&(-1)^\frac{n}{2}(1-x^2)^\frac{n}{2}G [(1-x^2)^\frac{n-2}{2}G]^{(n)}\\
      &=(-1)^\frac{n}{2}  \int_{-1}^1[(1-x^2)^\frac{n-2}{2}G][(1-x^2)^\frac{n}{2} G']^{(n-1)}+(-1)^\frac{n}{2}n\int_{-1}^1[x(1-x^2)^\frac{n-2}{2}G][(1-x^2)^\frac{n-2}{2} G]^{(n-1)}\\
      &= \lfloor G\rfloor^2+\frac{n(n-1)}{2}\int_{-1}^1|[(1-x^2)^\frac{n-2}{2} G]^{(\frac{n}{2}-1)}|^2,
     \end{aligned}
   \end{equation*}
where
\begin{equation}\label{norm}
  \lfloor G\rfloor^2=  \frac{2^{n-1}[(n/2-1)!]^2}{(n-1)!}\int_{\mathbb{S}^n} (P_n G ) G dw= \left(-1\right)^\frac{n}{2}  \int_{-1}^1 (1-x^2)^\frac{n-2}{2}[(1-x^2)^\frac{n}{2} G']^{(n-1)} G.
\end{equation}

However,  the last term is very sophisticated, and we will consider it for the cases $n=6$ and $n=8$ below.
More precisely,   after some  complicated computations, we derive that for $n=6$,
\begin{equation}\label{n=6}
 \int_{-1}^1(1-x^2)^2 G^2 [(1-x^2)^2 G]^{(5)}= -5\int_{-1}^1(1-x^2)^4 G'(G'')^2-\frac{80}{3}
   \int_{-1}^1(1-x^2)^3(G')^3,
\end{equation}
and
for $n=8$,
\begin{equation}\label{n=8}
\begin{aligned}
   &\quad\int_{-1}^1(1-x^2)^3 G^2 [(1-x^2)^3 G]^{(7)}=1260
   \int_{-1}^1(1-x^2)^4(G')^3+252\int_{-1}^1(1-x^2)^5 G'(G'')^2\\
   &+7\int_{-1}^1(1-x^2)^6G'(G^{(3)})^2 -21\int_{-1}^1[(1-x^2)G]^{(3)}(1-x^2)^5 (G'')^2.
\end{aligned}
\end{equation}
The details of the computation are postponed to Appendix A and B.

In view of  the above relations,  \eqref{key-equation} for $n=6$ becomes
  \begin{equation}\label{n=6key-eq1}
     \begin{aligned}
      &\quad\lfloor G\rfloor^2+15 \int_{-1}^1|[(1-x^2)^2 G]^{''}|^2-\frac{6!}{\alpha} \int_{-1}^1(1-x^2)^2 G^2\\
      & -30\int_{-1}^1(1-x^2)^4 G'(G'')^2-160
   \int_{-1}^1(1-x^2)^3(G')^3=0.
     \end{aligned}
   \end{equation}
Similarly,
 \eqref{key-equation} for $n=8$  is equivalent to
  \begin{equation}\label{n=8key-eq1}
     \begin{aligned}
      &\quad\lfloor G\rfloor^2+28\int_{-1}^1|[(1-x^2)^3 G]^{(3)}|^2-\frac{8!}{\alpha} \int_{-1}^1(1-x^2)^3 G^2\\
      & -56\bigg[180
   \int_{-1}^1(1-x^2)^4(G')^3+36\int_{-1}^1(1-x^2)^5 G'(G'')^2\\
   &+\int_{-1}^1(1-x^2)^6G'(G^{(3)})^2 -3\int_{-1}^1[(1-x^2)G]^{(3)}(1-x^2)^5 (G'')^2\bigg]=0.
     \end{aligned}
   \end{equation}

  Next we shall show a family of important  gradient estimates in $\s^n$,  which generalizes a  similar  result in $\s^4$.
\begin{lemma}\label{Gj-estimate-lemma}
 For all $x\in[-1,1]$, we have
\begin{equation}\label{gradient-n}
G_j:= (-1)^j[(1-x^2)^j G]^{(2j+1)}\le \frac{(2j+1)!}{\alpha},\quad x \in (-1, 1), \quad 0\le j\le \frac n2-1.
\end{equation}
\end{lemma}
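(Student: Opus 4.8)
The plan is to prove \eqref{gradient-n} by downward induction on $j$, from $j=\frac n2-1$ down to $j=0$, the inductive step being a maximum principle. The base case $j=\frac n2-1$ is immediate from the equation: since $(1-x^2)^{\frac n2}u'=(1-x^2)^{\frac{n-2}2}G=(1-x^2)^{\frac n2-1}G$ and $2(\frac n2-1)+1=n-1$, \eqref{G-eq0} rearranges to
\begin{equation*}
(-1)^{\frac n2}[(1-x^2)^{\frac n2-1}G]^{(n-1)}=\frac1\alpha\left(\frac{(n-1)!\sqrt\pi\,\Gamma(\tfrac n2)}{\Gamma(\tfrac{n+1}2)\,\gamma}\,e^{nu}-(n-1)!\right),
\end{equation*}
and since $(-1)^{\frac n2-1}=-(-1)^{\frac n2}$ this gives $G_{\frac n2-1}=\frac1\alpha\big((n-1)!-c\,e^{nu}\big)$ with $c=\frac{(n-1)!\sqrt\pi\,\Gamma(n/2)}{\Gamma((n+1)/2)\,\gamma}>0$; as $e^{nu}>0$ and $\gamma>0$, we get $G_{\frac n2-1}\le\frac{(n-1)!}\alpha=\frac{(2(\frac n2-1)+1)!}\alpha$.

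The engine of the induction is a recursion linking $G_j$ to $G_{j-1}$. Writing $\Phi_j=(1-x^2)^jG$, so that $G_j=(-1)^j\Phi_j^{(2j+1)}$ and $\Phi_j=(1-x^2)\Phi_{j-1}$, Leibniz's rule (using that $1-x^2$ has only three non-vanishing derivatives) gives $\Phi_j^{(2j+1)}=(1-x^2)\Phi_{j-1}^{(2j+1)}-2(2j+1)x\,\Phi_{j-1}^{(2j)}-2j(2j+1)\Phi_{j-1}^{(2j-1)}$; combining this with $\Phi_{j-1}^{(2j-1)}=(-1)^{j-1}G_{j-1}$, $\Phi_{j-1}^{(2j)}=(-1)^{j-1}G_{j-1}'$ and $\Phi_{j-1}^{(2j+1)}=(-1)^{j-1}G_{j-1}''$ and multiplying through by $(-1)^j$ yields
\begin{equation*}
G_j=2j(2j+1)\,G_{j-1}+2(2j+1)\,x\,G_{j-1}'-(1-x^2)\,G_{j-1}'',\qquad 1\le j\le\tfrac n2-1.
\end{equation*}
The key feature is that the zeroth-order coefficient is exactly $2j(2j+1)=(2j+1)!/(2j-1)!$, precisely the ratio of the two factorial bounds, which is what makes the step close.

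For the inductive step I would assume $G_j\le(2j+1)!/\alpha$ and put $w=G_{j-1}-(2j-1)!/\alpha$; the recursion together with $(2j+1)!=2j(2j+1)(2j-1)!$ then gives $2j(2j+1)w+2(2j+1)xw'-(1-x^2)w''\le0$ on $[-1,1]$. Since $u\in\C^\infty(\s^n)$, the function $G=(1-x^2)u'$ and each $G_i$ extend smoothly up to $x=\pm1$, so $w\in C^2([-1,1])$ attains its maximum at some $x_0\in[-1,1]$. If $x_0\in(-1,1)$ then $w'(x_0)=0$ and $w''(x_0)\le0$, so $2j(2j+1)w(x_0)\le(1-x_0^2)w''(x_0)\le0$; if $x_0=1$ then $w'(1)\ge0$ and the inequality at $x_0=1$ reduces to $2j(2j+1)w(1)+2(2j+1)w'(1)\le0$, forcing $w(1)\le0$, and $x_0=-1$ is symmetric. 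Hence $w\le0$, i.e. $G_{j-1}\le(2j-1)!/\alpha=(2(j-1)+1)!/\alpha$; iterating from $j=\frac n2-1$ down to $j=1$ proves \eqref{gradient-n} for all $0\le j\le\frac n2-1$. I expect the main obstacle to be the Leibniz bookkeeping that pins down the coefficient $2j(2j+1)$ — it is exactly what the maximum principle needs — together with the routine but necessary check that $G$ is smooth up to the poles, so that a boundary maximum can be handled via one-sided derivatives.
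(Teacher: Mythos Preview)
Your proof is correct and follows the same overall strategy as the paper: downward induction from $j=\tfrac n2-1$ to $j=0$, using the Leibniz recursion
\[
G_j=2j(2j+1)\,G_{j-1}+2(2j+1)\,x\,G_{j-1}'-(1-x^2)\,G_{j-1}''
\]
together with a maximum principle. The one substantive difference is how the boundary maximum is handled. The paper, when $\max G_{j-1}$ is attained at $x=\pm 1$, changes variables to $r=\sqrt{1-x^2}$, shows that $\bar G_{j-1}(r)$ extends to a smooth even function, and reads off the sign of the second Taylor coefficient to control $xG_{j-1}'$ in the limit. You instead observe directly that $w=G_{j-1}-(2j-1)!/\alpha$ is $C^2$ up to $x=\pm1$ (since axially symmetric functions in $C^\infty(\mathbb S^n)$ restrict to smooth functions of $\xi_1$ on $[-1,1]$), so that at a boundary maximum the one--sided first derivative has a sign, while the $(1-x^2)w''$ term simply vanishes; the inequality then forces $w\le 0$. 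This is cleaner and avoids the change of variable entirely, at the cost of relying on the (routine) smoothness-up-to-the-poles fact that the paper's $r$--coordinate argument is implicitly reproving. Either route closes the induction with the same constant $(2j+1)!/(2j-1)!=2j(2j+1)$ matching the zeroth-order coefficient.
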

\begin{proof}

We will first prove the result for the case $n=6$.

Since
\begin{equation*}
\begin{aligned}
 \left[(1-x^2)^2  G\right]^{(5)}&=
 \left[(1-x^2)((1-x^2)G)''-4x((1-x^2)G)'-2(1-x^2)G\right]^{(3)}\\
    &=\left[(1-x^2)((1-x^2)G)^{(3)}-6x((1-x^2)G)''-6((1-x^2)G)'\right]''\\
    &=-(1-x^2)G_1''+10x G_1'+20G_1.
\end{aligned}
\end{equation*}
Therefore, by \eqref{G-eq0} we have
\begin{equation}\label{G1''-upbd}
 -(1-x^2)G_1''+10x G_1'+20G_1\leq\frac{5!}{\alpha}.
\end{equation}
Let $M_1=\max_{x\in[-1,1]}G_1'(x)$.
\par
Case 1:
\begin{equation*}
  M_1=\lim_{x_k\rightarrow 1}G_1'(x_k) \quad \mbox{ for  some } x_k\in(-1,1).
\end{equation*}
As in \cite{Gui20},  let $r=|x'|=\sqrt{1-x^2}$,  then we write
\begin{equation*}
  G(x)=\bar G(r),\ \ G_1(x)=\bar G_1(r) \mbox{ and }u(x)=\bar u(r) \quad \mbox{ for } r\in[0,1)\mbox{ and }x\in(0,1],
\end{equation*}
and $\bar u(r)$  can  be extended evenly  so that $\bar u(r)\in \mathcal{C}^\infty(-1,1)$.
Hence,
\begin{equation}
 \begin{aligned}
   G_1(x)&=\bar G_1(r)=\frac{d^3}{dx^3}(-r^2\bar G(r)) \\
   &=\frac{d^3}{dx^3}[r^3\sqrt{1-r^2}\bar u_r]
   :=\frac{d^3}{dx^3}[g(r^2)],
    \end{aligned}
\end{equation}
where $g(s),\ s\in(-1,1)$ is a $\C^\infty$ function.
\par
Let $g_0(s)=g(s)$ and
\begin{equation*}
  g_i(s)=-2g_{i-1}'(s)\sqrt{1-s},\quad i=1,2,3.
\end{equation*}
 Then $g_i(s)\in \C^\infty(-1,1)$ for $i=1,2,3$.   Differentiate $g(r^2)$ with respect to $x$:
 \begin{equation*}
 \begin{aligned}
     \frac{d}{dx}g(r^2)=-2g'(r^2)\sqrt{1-r^2}=g_1(r^2).
 \end{aligned}
 \end{equation*}
Similarly, we have
\begin{equation*}
  \frac{d^i}{dx^i}g(r^2)=g_i(r^2),\quad i=2,3.
\end{equation*}
Therefore, $G_1(x) =\bar G_1(r)\in\C^\infty(-1,1)$ and is even.
We now can write
\begin{equation}\label{G1(r)}
 \bar G_1(r)=c_1+c_2r^2+c_3r^4=O(r^6) \quad \mbox{near }r=0.
\end{equation}
     Then $c_2\leq0$, since $M_1=G_1(1)=\bar G_1(0)$.  Note that  near $r=0$,
\begin{equation*}
  \begin{aligned}
   x  G_1'(x)&=\sqrt{1-r^2}\frac{d\bar G_1'(x)}{dr}\frac{dr}{dx}\\
    &=-2c_2+O(r^2)
  \end{aligned}
\end{equation*}
and
\begin{equation*}
  \begin{aligned}
   (1-x^2)  G_1''(x) = (-2(c_2-4c_3)+O(r^2))r^2.
  \end{aligned}
\end{equation*}
It follows from \eqref{G1''-upbd} that
\begin{equation}\label{G1-upbd}
  G_1\leq\frac{5!}{20\alpha}=\frac{3!}{\alpha},\quad\forall x\in[-1,1].
\end{equation}
\par
 Case 2: if
 \begin{equation*}
  M_1=\lim_{x_k\rightarrow -1}G_1'(x_k) \quad \mbox{ for  some } x_k\in(-1,1).
\end{equation*}
Then it is similar to the  case 1 to show \eqref{G1-upbd}.

Case 3:  Let $M_1=G_1(x_0)$ for some $x_0\in(-1,1)$.  Then
\begin{equation*}
   G_1'(x_0)=0 \quad\mbox{and }\quad   G''(x_0)\leq0.
\end{equation*}
 \eqref{G1-upbd}  immediately follows from \eqref{G1''-upbd}.
 \par
 We see from \eqref{G1-upbd} that
 \begin{equation*}
   \begin{aligned}
    ((1-x^2)G)^{(3)}&=-6G'-6xG''+(1-x^2)G'''\geq -\frac{6}{\alpha},\quad\forall x\in[-1,1].
   \end{aligned}
 \end{equation*}
Repeating the previous arguments, we can conclude
\begin{equation*}
  G'\leq \frac{1}{\alpha}, \quad\forall x\in[-1,1].
\end{equation*}

In general,  we can start with $G_{j}, j=\frac n2 -2$.

In view of \eqref{G-eq0},  one  directly has
\begin{equation*}
(-1)^\frac{n}{2}((1-x^2)^\frac{n-2}{2}G)^{(n-1)}\le \frac{(n-1)!}{\alpha}.
\end{equation*}

Since
\begin{equation*}
 (-1)^\frac{n}{2}((1-x^2)^\frac{n-2}{2}G)^{(n-1)}=-(1-x^2) G_{\frac n2-2}''+2(n-1)G_{\frac n2-2}'+(n-1)(n-2) G_{\frac n2-2},
\end{equation*}
we can follow the same arguments above to obtain \eqref{gradient-n} for $j=\frac n2 -2$.
Then we can apply mathematical induction on  $j$ from $\frac n2-2$ to $0 $ to  conclude \eqref{gradient-n} for all  $0\le j \le \frac n2-1.$
\end{proof}

\subsection{The Case:\ n=6}
\vskip 3mm
\noindent\par

 Throughout the rest of this subsection,
we will focus on $n=6$ and  the equation
    \begin{equation}\label{6-mfe}
    \alpha P_6 u+5!\left(1- \frac{e^{6u}}{\int_{\s^6}e^{6u}dw} \right)=0,
    \end{equation}
    where $P_6=-\Delta(-\Delta+4)(-\Delta+6)$.
  As previously introduced,
\begin{equation}\label{P6-axial}
  P_6u=-[(1-x^2)^3 u']^{(5)}=
  -[(1-x^2)^2\Delta u]^{(4)},
\end{equation}
and equation \eqref{6-mfe} is reduced to
  \begin{equation}\label{6-ode}
  -\alpha[(1-x^2)^3 u']^{(5)}+5!- \frac{2^7}{\gamma}e^{6u}=0.
 \end{equation}
 In view of \eqref{decomp_G^2}-\eqref{decomp_Gn-2}, we derive that
\begin{equation}\label{n=6decomp-G''1}
  \int_{-1}^1(1-x^2)^4 (G'')^2=\sum_{k=1}^\infty\bar\lambda_k(\bar\lambda_k-6) t_k^2.
\end{equation}
Moreover,
\eqref{decomp_Gn-2} can be rewritten as
\begin{equation}\label{n=6decomp-G''}
 \int_{-1}^1|[(1-x^2)^2 G]^{''}|^2 =
 \sum_{k=1}^\infty(\bar\lambda_k+4)(\bar\lambda_k+6) t_k^2.
\end{equation}

\begin{lemma}
Let $n=6$, using the semi-norm  $  \lfloor G\rfloor$  defined in \eqref{norm}, we have the following estimate:
\begin{equation}\label{n=6keyestimate}
  \lfloor G\rfloor^2\leq \left(\frac{30}{\alpha}-15\right)\int_{-1}^1 |[(1-x^2)^2G]''|^2-\frac{320}{\alpha}\int_{-1}^1 (1-x^2)^3(G')^2.
\end{equation}
\end{lemma}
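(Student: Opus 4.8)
The plan is to extract \eqref{n=6keyestimate} essentially by hand from the energy identity \eqref{n=6key-eq1}: first rewrite that identity as a formula for $\lfloor G\rfloor^2$, then convert its two cubic terms into quadratic ones using the pointwise gradient bound of Lemma \ref{Gj-estimate-lemma}, and finally collapse the resulting quadratic functionals by means of a single algebraic identity, at which stage the $\int_{-1}^1(1-x^2)^2G^2$ term drops out. Concretely, \eqref{n=6key-eq1} can be read as
\[
\lfloor G\rfloor^2 = -15\int_{-1}^1\big|[(1-x^2)^2G]''\big|^2 + \frac{720}{\alpha}\int_{-1}^1(1-x^2)^2G^2 + 30\int_{-1}^1(1-x^2)^4G'(G'')^2 + 160\int_{-1}^1(1-x^2)^3(G')^3 .
\]

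Next I would invoke Lemma \ref{Gj-estimate-lemma} in the case $j=0$, namely $G'(x)\le \tfrac1\alpha$ on $(-1,1)$. Since $\alpha>0$, for every $x$ one has $\big(\tfrac1\alpha-G'(x)\big)(G''(x))^2\ge 0$ and $\big(\tfrac1\alpha-G'(x)\big)(G'(x))^2\ge 0$; multiplying these by the nonnegative weights $(1-x^2)^4$ and $(1-x^2)^3$ and integrating gives
\[
\int_{-1}^1(1-x^2)^4G'(G'')^2\le\frac1\alpha\int_{-1}^1(1-x^2)^4(G'')^2,\qquad \int_{-1}^1(1-x^2)^3(G')^3\le\frac1\alpha\int_{-1}^1(1-x^2)^3(G')^2 .
\]
The point to be careful about is that $G'(G'')^2$ and $(G')^3$ are not themselves sign-definite, so one genuinely needs to pair the sign of $\tfrac1\alpha-G'$ with a manifestly nonnegative factor rather than bounding, say, $(G')^3$ by $\tfrac1\alpha(G')^2$ naively.

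The remaining ingredient is the identity among quadratic functionals
\[
\int_{-1}^1\big|[(1-x^2)^2G]''\big|^2 = \int_{-1}^1(1-x^2)^4(G'')^2 + 16\int_{-1}^1(1-x^2)^3(G')^2 + 24\int_{-1}^1(1-x^2)^2G^2 ,
\]
which I would read off from the spectral decompositions \eqref{decomp_G^2}, \eqref{decomp_G'}, \eqref{n=6decomp-G''1} and \eqref{n=6decomp-G''}, since it amounts to the elementary relation $(\bar\lambda_k+4)(\bar\lambda_k+6)=\bar\lambda_k(\bar\lambda_k-6)+16\bar\lambda_k+24$ valid for all $k\ge 1$ (alternatively it follows by repeated integration by parts on $[-1,1]$, the boundary terms vanishing thanks to the weights). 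Now I would feed the two estimates of the previous paragraph into the formula for $\lfloor G\rfloor^2$ and use this identity to eliminate $\int_{-1}^1(1-x^2)^4(G'')^2$: the coefficient of $\int_{-1}^1(1-x^2)^2G^2$ becomes $\tfrac{720}{\alpha}-\tfrac{30}{\alpha}\cdot 24=0$, the coefficient of $\int_{-1}^1\big|[(1-x^2)^2G]''\big|^2$ becomes $\tfrac{30}{\alpha}-15$, and the coefficient of $\int_{-1}^1(1-x^2)^3(G')^2$ becomes $\tfrac{160}{\alpha}-\tfrac{30}{\alpha}\cdot 16=-\tfrac{320}{\alpha}$, which is exactly \eqref{n=6keyestimate}.

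I do not expect a genuine analytic obstacle here: once \eqref{n=6key-eq1}, the gradient estimate of Lemma \ref{Gj-estimate-lemma}, and the spectral decompositions are granted, the argument is bookkeeping. The two places where care is needed are the sign discussion in the second step and the verification that the $\int_{-1}^1(1-x^2)^2G^2$ contribution cancels exactly in the last step; this cancellation relies on the numerical coincidence $24\cdot 30 = 6!$, and had the algebraic identity carried a different coefficient in front of $\int_{-1}^1(1-x^2)^2G^2$ one would have been left with a residual $\int_{-1}^1(1-x^2)^2G^2$ term requiring further estimation.
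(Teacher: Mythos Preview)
Your proof is correct and follows essentially the same route as the paper: start from the energy identity \eqref{n=6key-eq1}, bound the two cubic terms via the pointwise estimate $G'\le\frac1\alpha$ from Lemma~\ref{Gj-estimate-lemma}, and then collapse the quadratic functionals using the spectral relations \eqref{decomp_G^2}, \eqref{decomp_G'}, \eqref{n=6decomp-G''1}, \eqref{n=6decomp-G''}. The only cosmetic difference is that the paper combines the last step into the single observation $6!+30\bar\lambda_k(\bar\lambda_k-6)+160\bar\lambda_k=30(\bar\lambda_k+4)(\bar\lambda_k+6)-320\bar\lambda_k$, whereas you first isolate $\int_{-1}^1(1-x^2)^4(G'')^2$ via the identity $(\bar\lambda_k+4)(\bar\lambda_k+6)=\bar\lambda_k(\bar\lambda_k-6)+16\bar\lambda_k+24$ and then substitute; these are the same computation.
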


\begin{proof}

By \eqref{decomp_G^2}, \eqref{n=6decomp-G''1}, \eqref{n=6decomp-G''} and Lemma  \ref{Gj-estimate-lemma}, we get
\begin{equation*}
  \begin{aligned}
  \lfloor G\rfloor^2 +15 \int_{-1}^1|[(1-x^2)^2 G]^{''}|^2&\le \frac{1}{\alpha}\int_{-1}^1 \left[6!(1-x^2)^2 G^2
      +30 (1-x^2)^4 (G'')^2+160(1-x^2)^3(G')^2\right]\\
   &=\frac{1}{\alpha}\int_{-1}^1 \left[30\left|[(1-x^2)^2 G]^{''}\right|^2-320(1-x^2)^3(G')^2\right].
  \end{aligned}
\end{equation*}
So \eqref{n=6keyestimate} holds.
   \end{proof}
\begin{proposition}\label{2/3}
If $\frac{2}{3}<\alpha<1$, any axially symmetric solution to \eqref{6-mfe} must be constant.
\end{proposition}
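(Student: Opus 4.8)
The plan is to combine the key estimate \eqref{n=6keyestimate} with the two exact identities \eqref{n=6decomp-G''} and \eqref{key-id}, both expressed through the spectral coefficients $t_k^2$, and then argue that the resulting inequality forces $t_k^2 = 0$ for all $k$. First I would record that, by Proposition~\ref{n-pro} (applicable since $\alpha \neq 1$) together with the reduction to the ODE, the relevant solution has $G$ expanded as in \eqref{decomp-G-1}, i.e.\ only modes $k \ge 1$ appear, and in particular the center-of-mass condition is automatic. Next I would substitute the decompositions into \eqref{n=6keyestimate}: using \eqref{n=6decomp-G''} for $\lfloor G\rfloor^2 = \sum_k (\bar\lambda_k+4)(\bar\lambda_k+6)t_k^2$ on the left (note the semi-norm $\lfloor G \rfloor^2$ equals this sum by \eqref{norm} and the eigenvalue relation, since $P_6$ acts with eigenvalue $\lambda_k = \bar\lambda_k(\bar\lambda_k+4)(\bar\lambda_k+6)/\bar\lambda_k$... more precisely $\lfloor G\rfloor^2 = \sum \lambda_k/\bar\lambda_k \cdot t_k^2 \cdot$ the appropriate normalization, which is exactly \eqref{n=6decomp-G''}), for the first term on the right again \eqref{n=6decomp-G''}, and dropping the manifestly nonnegative last term $\frac{320}{\alpha}\int (1-x^2)^3(G')^2 \ge 0$. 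This yields
\begin{equation*}
\sum_{k=1}^\infty (\bar\lambda_k+4)(\bar\lambda_k+6)\,t_k^2 \le \left(\frac{30}{\alpha}-15\right)\sum_{k=1}^\infty (\bar\lambda_k+4)(\bar\lambda_k+6)\,t_k^2,
\end{equation*}
which is absurd unless all $t_k^2$ vanish, provided $\frac{30}{\alpha}-15 < 1$, i.e.\ $\alpha > \frac{30}{31}$.

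Since the threshold $\frac{30}{31}$ is worse than the claimed $\frac23$, the point is that one cannot simply drop the $(G')^2$ term; instead I would use it to absorb part of the left side. The refined plan: keep \eqref{key-id}, which gives a \emph{second} linear relation among the $t_k^2$, namely $\sum_k \tfrac{\lambda_k}{\bar\lambda_k} t_k^2 = \big(\text{const}\big)(n+1-\tfrac1\alpha)\beta$ where $\beta = d_1$ and $t_1^2 = \beta^2 \int (1-x^2)^{(n-2)/2} C_1^2$ is a fixed multiple of $\beta^2$. Combined with \eqref{n=6key-eq1} rewritten spectrally, and the lower bound $\int(1-x^2)^3 (G')^2 = \sum_k \bar\lambda_k t_k^2$ (by \eqref{decomp_G'} with the right power — actually $\int (1-x^2)^{n/2}(G')^2 = \sum \bar\lambda_k t_k^2$, and here I must be careful about the power of $(1-x^2)$; for $n=6$ the term in \eqref{n=6key-eq1} is $(1-x^2)^3(G')^2$ which is exactly this), one gets a termwise inequality of the shape $c_k(\alpha)\, t_k^2 \le 0$ for each $k \ge 2$, plus a separate handling of the $k=1$ mode via \eqref{key-id}. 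I would check that $c_k(\alpha) > 0$ for all $k \ge 2$ precisely when $\alpha > \frac23$, so that $t_k^2 = 0$ for $k \ge 2$; then the $k=1$ relation \eqref{key-id} together with $\lfloor G\rfloor^2 \ge 0$ forces $\beta = 0$ as well (here the sign of $n+1-\frac1\alpha = 7 - \frac1\alpha > 0$ for $\alpha > \frac17$ is what makes $\beta \le 0$, while $\beta \ge 0$ comes from positivity, or vice versa). Hence $G \equiv 0$, so $u' \equiv 0$ and $u$ is constant.

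The main obstacle, and where the real content lies, is establishing the \emph{termwise} positivity of the coefficient $c_k(\alpha)$ for $k \ge 2$ with the sharp threshold $\alpha > \frac23$. Writing everything in terms of $\bar\lambda_k = k(k+5)$, one has $\lfloor G \rfloor^2$ contributing $(\bar\lambda_k+4)(\bar\lambda_k+6)$, the term $15\int|[(1-x^2)^2G]''|^2$ contributing $15(\bar\lambda_k+4)(\bar\lambda_k+6)$, the term $\frac{6!}{\alpha}\int(1-x^2)^2G^2$ contributing $\frac{720}{\alpha}$, and — crucially — the cubic terms $-30\int(1-x^2)^4 G'(G'')^2 - 160\int(1-x^2)^3(G')^3$ are \emph{not} diagonal in the $t_k^2$; these are the genuinely nonlinear pieces. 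So the honest strategy is not pure spectral decomposition of \eqref{n=6key-eq1} but rather: use Lemma~\ref{Gj-estimate-lemma} (the gradient bounds $G_1 \le 3!/\alpha$ and $G' \le 1/\alpha$) to pointwise-bound the cubic terms by quadratic ones, which is exactly how \eqref{n=6keyestimate} was derived, and then squeeze the remaining genuinely-quadratic inequality. The subtlety is that \eqref{n=6keyestimate} as stated only gives $\alpha > 30/31$; to reach $2/3$ one must \emph{not} throw away the favorable $-\frac{320}{\alpha}\int(1-x^2)^3(G')^2$ term but instead combine it with \eqref{decomp_G'} (giving $\sum \bar\lambda_k t_k^2$, with $\bar\lambda_k \ge \bar\lambda_2 = 14$ for $k \ge 2$) against $\lfloor G\rfloor^2 = \sum(\bar\lambda_k+4)(\bar\lambda_k+6)t_k^2 \ge (\bar\lambda_2+4)(\bar\lambda_2+6)\sum t_k^2$ over the $k \ge 2$ modes — and treat $k=1$ (where $\bar\lambda_1 = 6$, so $\bar\lambda_1 - 6 = 0$ kills the $(G'')^2$ contribution) by the exact identity \eqref{key-id}. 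Getting the arithmetic to land exactly on $\alpha = 2/3$ rather than something slightly worse is the delicate part; I expect it reduces to checking a single rational inequality in $k$ and $\alpha$ that is tight at $k=2$.
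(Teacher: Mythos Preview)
Your overall strategy---spectrally decompose the inequality \eqref{n=6keyestimate} and show each mode's coefficient is positive---is exactly what the paper does. But there is a concrete error that derails the computation: you identify $\lfloor G\rfloor^2$ with $\int_{-1}^1|[(1-x^2)^2G]''|^2$, i.e.\ with $\sum_k(\bar\lambda_k+4)(\bar\lambda_k+6)\,t_k^2$. That is the spectral expansion of \eqref{n=6decomp-G''}, not of $\lfloor G\rfloor^2$. From \eqref{norm} and $P_6 C_k=\lambda_k C_k=\bar\lambda_k(\bar\lambda_k+4)(\bar\lambda_k+6)C_k$ one has
\[
\lfloor G\rfloor^2=\sum_{k\ge 1}\bar\lambda_k(\bar\lambda_k+4)(\bar\lambda_k+6)\,t_k^2,
\]
which carries an extra factor of $\bar\lambda_k$. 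This is why your first attempt gives a useless threshold (and the number $30/31$ is in any case an arithmetic slip: $\tfrac{30}{\alpha}-15<1$ gives $\alpha>15/8$, not $30/31$).

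With the correct $\lfloor G\rfloor^2$, the argument is direct and requires neither \eqref{key-id} nor any special treatment of the $k=1$ mode. Feed the spectral expansions \eqref{decomp_G'} and \eqref{n=6decomp-G''} into the full inequality (do \emph{not} drop the $(G')^2$ term) in the form
\[
\lfloor G\rfloor^2+15\int|[(1-x^2)^2G]''|^2\le \frac{1}{\alpha}\Big[30\int|[(1-x^2)^2G]''|^2-320\int(1-x^2)^3(G')^2\Big],
\]
which gives
\[
\sum_{k\ge1}\Big[(\bar\lambda_k+15)(\bar\lambda_k+4)(\bar\lambda_k+6)-\tfrac{1}{\alpha}\big(30\bar\lambda_k^2-20\bar\lambda_k+720\big)\Big]t_k^2\le 0.
\]
At $\alpha=\tfrac23$ the bracket factors as $(\bar\lambda_k-6)(\bar\lambda_k^2-14\bar\lambda_k+120)$; the quadratic has negative discriminant and the linear factor is $\ge0$ for $k\ge1$. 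Hence for $\alpha>\tfrac23$ every coefficient is strictly positive, forcing all $t_k=0$. The critical mode is $k=1$ (where $\bar\lambda_1=6$), not $k=2$ as you guessed, and the whole ``refined plan'' invoking \eqref{key-id} is unnecessary.
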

\begin{proof}
 We only need to show that
   either $\alpha\le \frac23$ or that $G$ and hence $u$ is identically $0$. Indeed,   when $\alpha>2/3$, it follows from \eqref{n=6keyestimate} that
 \begin{equation*}
  \begin{aligned}
  \lfloor G\rfloor^2 +15 \int_{-1}^1|[(1-x^2)^2 G]^{''}|^2&<\frac32\sum_{k=1}^\infty (30\bar\lambda_k^2-20\bar\lambda_k+720)t_k^2.
  \end{aligned}
\end{equation*}
Equivalently,
 \begin{equation*}
  \begin{aligned}
 0&>\sum_{k=1}^\infty \left[(\bar\lambda_k+15)(\bar\lambda_k+4)(\bar\lambda_k+6)-\frac32(30\bar\lambda_k^2
 -20\bar\lambda_k+720)\right]t_k^2\\
 &=\sum_{k=1}^\infty  (\bar\lambda_k-6) (\bar\lambda_k^2-14\bar\lambda_k+120) t_k^2.
  \end{aligned}
\end{equation*}
In view of \eqref{Ck-ode} the assertion holds.
\end{proof}

For $n=6$, \eqref{C1G} and \eqref{key-id} become
\begin{equation}\label{n=6C1G}
     \int_{-1}^1(1-x^2)^2 C_1 G=\frac{16}{105}\beta
\end{equation}
and
\begin{equation}\label{n=6key-id}
     \int_{-1}^1 |[(1-x^2)^2G]''|^2=\frac{256}{35}\left(7-\frac{1}{\alpha}\right)\beta.
\end{equation}

Inspired by   \cite{Gui00} and \cite{Gui98}, our basic strategy is to assume $\beta\neq 0$, and show that  it leads to a contradiction with the range of $\alpha$. It is fairly easy to see from \eqref{n=6key-id} that
\begin{equation}\label{contradiction}
   \mbox{ if }\beta=0,    \mbox{ then }\nabla u=0,   \mbox{   which shows that $u$ is a constant.}
\end{equation}
In what follows,  suppose that $\beta\neq0$ and
\begin{equation}\label{alpha-range1}
  \frac35<\alpha\le \frac23.
\end{equation}
  Then it is easy to see from \eqref{n=6C1G} and \eqref{enu} that
\begin{equation}\label{beta-range1}
  0<\beta<\frac{1}{\alpha}.
\end{equation}

\begin{proof}[Proof of Theorem \ref{even} for $n=6$]

We first define the following quantity
\begin{equation}\label{D}
  D:=\sum_{k=3}^\infty\left[\bar\lambda_k(\bar\lambda_k+4)(\bar\lambda_k+6)
  -\left(14+\frac{112}{9\alpha}\right)
  (\bar\lambda_k+4)(\bar\lambda_k+6)+\frac{320}{\alpha}\bar\lambda_k\right]t_k^2.
\end{equation}
We now give the upper bound of $D$. From \eqref{n=6keyestimate}   we see that
\begin{equation}\label{D-upbd}
\begin{aligned}
  D &=\lfloor G\rfloor^2 -\left(14+\frac{112}{9\alpha}\right)\int_{-1}^1|[(1-x^2)^2 G]^{''}|^2+\frac{320}{\alpha}
  \int_{-1}^1 (1-x^2)^3(G')^2\\
  &-(\frac{1280}{3\alpha}-960)\beta^2\int_{-1}^1(1-x^2)^2C_1^2\\
    &\leq \left(\frac{158}{9\alpha}-29\right)\int_{-1}^1|[(1-x^2)^2 G]^{''}|^2+\frac{16}{21}\left(192-\frac{256}{3\alpha}\right) \beta^2 \\
    &\leq \frac{16\beta}{7}\left[
    \frac{16}{5}\left(7-\frac{1}{\alpha}\right)\left(\frac{158}{9\alpha}-29\right)
    +\left(64-\frac{256}{9\alpha}\right)\beta\right].
   \end{aligned}
\end{equation}
It is easy to check $D>0.$ Combining \eqref{beta-range1} and \eqref{D-upbd}, we have
\begin{equation*}
   \frac{16}{5}\left(7-\frac{1}{\alpha}\right)\left(\frac{158}{9\alpha}-29\right)
    +\left(64-\frac{256}{9\alpha}\right)\frac{1}{\alpha}>0.
\end{equation*}
Therefore,
\begin{equation}\label{alpha-up1}
 \alpha<\bar\alpha:=\frac{221 + \sqrt{13345}}{522}<\frac{2}{3}.
\end{equation}

\par
To estimate the refined  lower bound of $D$, we define
\begin{equation*}
  g(t)=t- \left(14+\frac{112}{9\alpha}\right)+\frac{320}{\alpha} \frac{t}{(t+4)(t+6)}.
\end{equation*}
Then it is easy to see that
\begin{equation*}
  g'(t)>0 \mbox{ for }t\ge \bar\lambda_3(=24).
\end{equation*}
 Thus,
\begin{equation*}
  \begin{aligned}
 D&=\sum_{k=3}^\infty\left[\bar\lambda_k- \left(14+\frac{112}{9\alpha}\right)+\frac{320}{\alpha} \frac{\bar\lambda_k}{(\bar\lambda_k+4)(\bar\lambda_k+6)} \right](\bar\lambda_k+4)(\bar\lambda_k+6)t_k^2\\
 &\geq\left[\bar\lambda_3- \left(14+\frac{112}{9\alpha}\right)+\frac{320}{\alpha} \frac{\bar\lambda_3}{(\bar\lambda_3+4)(\bar\lambda_3+6)} \right]\sum_{k=3}^\infty(\bar\lambda_k+4)(\bar\lambda_k+6)t_k^2\\
 &=\left(10-\frac{208}{63\alpha}\right)\sum_{k=3}^\infty
 (\bar\lambda_k+4)(\bar\lambda_k+6)t_k^2.
   \end{aligned}
\end{equation*}

On the other hand,
we derive from \eqref{Ck'}, \eqref{Ck-ortho} and \eqref{CkG} that
 \begin{equation}\label{bk-upbd}
 \begin{aligned}
  t_k^2&=d_k^2\int_{-1}^1(1-x^2)^2C_k^2=
  \left(\int_{-1}^1(1-x^2)^2C_k^2\right)^{-1}\left[\frac{128}{\alpha\lambda_k}\int_{-1}^1 \frac{e^{6u}}{\gamma}(1-x^2)^3C_k'\right]^2\\
  &\leq \frac{(2k+5)(\bar\lambda_k+4)(\bar\lambda_k+6)}{128}\left[\frac{8}{\alpha  \lambda_k }\frac{\bar\lambda_k}{7}
   (1-\alpha\beta)\right]^2 \\
  &= \frac{128(2k+5)}{49(\bar\lambda_k+4)(\bar\lambda_k+6)}\left(\frac{1}{\alpha}-\beta\right)^2,\quad k\geq 2.
 \end{aligned}
\end{equation}
In particular,
 \begin{equation}\label{ak-upbd}
 \begin{aligned}
    d_k^2&\leq\left(\int_{-1}^1(1-x^2)^2C_k^2\right)^{-1}\frac{128(2k+5)}{
    49(\bar\lambda_k+4)(\bar\lambda_k+6)}
    \left(\frac{1}{\alpha}-\beta\right)^2\\
    &\le \left[\frac{2k+5}{7}\left(\frac{1}{\alpha}-\beta\right)\right]^2, \quad k\geq 2.
 \end{aligned}
\end{equation}
 By \eqref{n=6C1G}, \eqref{n=6key-id} and \eqref{bk-upbd}, we get the lower bound of $D$:
\begin{equation}\label{D-lowbd}
\begin{aligned}
 D&\geq\left(10-\frac{208}{63\alpha}\right)\sum_{k=3}^\infty
 (\bar\lambda_k+4)(\bar\lambda_k+6)t_k^2\\
    &= \left(10-\frac{208}{63\alpha}\right)\left[\int_{-1}^1|[(1-x^2)^2 G]^{''}|^2-\frac{128}{7}\beta^2-
    \frac{1152}{49}\left(\frac{1}{\alpha}-\beta\right)^2\right]\\
    &\geq \frac{16}{7} \left(10-\frac{208}{63\alpha}\right) \left[ \frac{16\beta}{5}\left(7-\frac{1}{\alpha}\right)-8\beta^2
    -\frac{72}{7}\left(\frac{1}{\alpha}-\beta\right)^2\right].
   \end{aligned}
\end{equation}
By both \eqref{D-upbd} and \eqref{D-lowbd}, we see that
\begin{equation*}
\begin{aligned}
   &\quad  \frac{16\beta}{7}\left[
    \frac{16}{5}\left(7-\frac{1}{\alpha}\right)\left(\frac{158}{9\alpha}-29\right)
    +\left(64-\frac{256}{9\alpha}\right)\beta\right] \\
    &\geq \frac{16}{7} \left(10-\frac{208}{63\alpha}\right) \left[ \frac{16\beta}{5}\left(7-\frac{1}{\alpha}\right)-8\beta^2
    -\frac{72}{7}\left(\frac{1}{\alpha}-\beta\right)^2\right].
   \end{aligned}
\end{equation*}
A straightforward computation shows that
\begin{equation*}
  \begin{aligned}
   0& \leq  \frac{16\beta}{5}\left(7-\frac{1}{\alpha}\right)
   \left[\frac{158}{9\alpha}-29-\left(10-\frac{208}{63\alpha}\right)
    \right] +\beta^2\left[64-\frac{256}{9\alpha}
     +8\left(10-\frac{208}{63\alpha}\right) \right]\\
     &+ \frac{72}{7} \left(10-\frac{208}{63\alpha}\right)
    \left(\frac{1}{\alpha}-\beta\right)^2\\
    &=\frac{16\beta}{5}\left(7-\frac{1}{\alpha}\right)
   \left(\frac{146}{7\alpha}-39\right)+48\beta^2\left(3-\frac{8}{7\alpha} \right)+ \frac{72}{7} \left(10-\frac{208}{63\alpha}\right)
   \left(\frac{1}{\alpha}-\beta\right)^2.
   \end{aligned}
\end{equation*}
We further have
\begin{equation}\label{ine-1}
\begin{aligned}
  &\quad \beta\left[\frac{2}{5}\left(7-\frac{1}{\alpha}\right)
   \left(\frac{146}{7\alpha}-39\right)+ \frac{1}{\alpha}\left(18-\frac{48}{7\alpha} \right)\right]\\
   &\ge\left(\frac{1}{\alpha}-\beta\right)  \left[\left(18-\frac{48}{7\alpha} \right)\beta- \left(\frac{90}{7}-\frac{208}{49\alpha}\right)
   \left(\frac{1}{\alpha}-\beta\right)\right]\\
   &:=\left(\frac{1}{\alpha}-\beta\right) I.
\end{aligned}
\end{equation}
 We  want to show that the  term $I$ is nonnegative.  Thus, we need to obtain the lower bound of $\beta.$
Note that the argument  exploited  in  \cite{Gui20}    is not applicable to this case. Precisely, following \cite{Gui20}, \eqref{D-upbd}  implies that
\begin{equation*}
 \left(64-\frac{256}{9\alpha}\right)\beta> \frac{16}{5}\left(7-\frac{1}{\alpha}\right)\left(29-\frac{158}{9\alpha}\right).
\end{equation*}
 However, the term $29-\frac{158}{9\alpha}$ is positive when $\alpha>\frac{158}{261}\approx0.6054$.
Hence, we   choose a suitable $\underline{\alpha}>\frac{158}{261}$ so  that
\begin{equation}\label{31}
 I>0,\quad \mbox{ for }  \alpha\in(\underline{\alpha},\bar\alpha).
\end{equation}
Then $\underline{\alpha}$ satisfies that
 \begin{equation*}
   \begin{aligned}
     I&\geq\left(18-\frac{48}{7\underline{\alpha}} \right)\beta+ \left(\frac{208}{49\bar\alpha}-\frac{90}{7} \right)
   \left(\frac{1}{\alpha}-\beta\right)\\
   &\geq\left(18+\frac{90}{7}-\frac{48}{7\underline{\alpha}}-\frac{208}{49\bar\alpha} \right)\beta+ \left(\frac{208}{49\bar\alpha}-\frac{90}{7} \right)
     \frac{1}{\underline{\alpha}}\\
     &\ge\left(\frac{216}{7}-\frac{48}{7\underline{\alpha}}
     -\frac{208}{49\bar\alpha} \right)
     \left(7-\frac{1}{\underline\alpha}\right)\left(29-\frac{158}{9\underline\alpha}\right)
     \frac{9\bar \alpha}{180\bar\alpha-80}+\left(\frac{208}{49\bar\alpha}-\frac{90}{7} \right)
     \frac{1}{\underline{\alpha}}\\
     &\geq0,
   \end{aligned}
 \end{equation*}
which indicates  $\underline{\alpha}\ge 0.61488$.   So we take $\underline{\alpha}=0.61488$.
 For $\alpha\in(\underline{\alpha},\bar\alpha)$, it follows from \eqref{ine-1} that
\begin{equation*}
  \frac{2}{5}\left(7-\frac{1}{\alpha}\right)
   \left(\frac{146}{7\alpha}-39\right)+ \frac{1}{\alpha}\left(18-\frac{48}{7\alpha} \right)>0.
\end{equation*}
Hence we find
\begin{equation}\label{alpha6}
   \alpha< \alpha^{(6)}:=\frac{115 + \sqrt{2851}}{273} \approx0.61683.
\end{equation}
Theorem \ref{even} for $n=6$ is proven.
\end{proof}

\vskip 3mm
\subsection{The Case:\ n=8}
\vskip 3mm
\noindent\par
This subsection focuses on the case $n=8$.   As we have addressed,
\begin{equation}\label{P8}
  P_8=-\Delta(-\Delta+6)(-\Delta+10)(-\Delta+12),
\end{equation}
and equation \eqref{n-mfe} for $n=8$  becomes
  \begin{equation}\label{8-ode}
  \alpha[(1-x^2)^4 u']^{(7)}+7!- \frac{9*2^9}{\gamma}e^{8u}=0.
 \end{equation}

 In view of \eqref{decomp_G^2}-\eqref{decomp_Gn-2}, we derive that
\begin{equation}\label{n=8decomp-G3-1}
\int_{-1}^1(1-x^2)^5(G'')^2=\sum_{k=1}^\infty\bar\lambda_k(\bar\lambda_k-8) t_k^2
\end{equation}
and
\begin{equation}\label{n=8decomp-G2}
  \int_{-1}^1(1-x^2)^6(G^{(3)})^2=\sum_{k=1}^\infty\bar\lambda_k(\bar\lambda_k^2-26\bar\lambda_k+144) t_k^2.
\end{equation}
Moreover,
\eqref{decomp_Gn-2} can be rewritten as
\begin{equation}\label{n=8decomp-G3}
 \int_{-1}^1|[(1-x^2)^3 G]^{(3)}|^2 =
 \sum_{k=1}^\infty \tilde\lambda_k t_k^2,
\end{equation}
where \begin{equation}\label{tildek}
 \tilde\lambda_k=(\bar\lambda_k+6)(\bar\lambda_k+10)
  (\bar\lambda_k+12).
\end{equation}
\vskip 2mm
\begin{lemma}
Let $n=8$, then we have the following estimate:
\begin{equation}\label{n=8keyestimate}
  \lfloor G\rfloor^2\leq 28\left(\frac{2}{\alpha}-1\right) \int_{-1}^1|[(1-x^2)^3 G]^{(3)}|^2-\frac{20160}{\alpha}\int_{-1}^1 (1-x^2)^4(G')^2,
\end{equation}
where $\lfloor G\rfloor$ is defined in \eqref{norm}.
\end{lemma}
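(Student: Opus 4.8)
The plan is to repeat, for $n=8$, the strategy that proved \eqref{n=6keyestimate}, now taking \eqref{n=8key-eq1} as the starting identity. Rearranging \eqref{n=8key-eq1} isolates $\lfloor G\rfloor^2+28\int_{-1}^1|[(1-x^2)^3 G]^{(3)}|^2$ on the left, against $\frac{8!}{\alpha}\int_{-1}^1(1-x^2)^3G^2$ plus $56$ times the bracket of four cubic integrals on the right. Each cubic integral is to be traded for a quadratic one using Lemma \ref{Gj-estimate-lemma}. The first three,
\[
180\!\int_{-1}^1(1-x^2)^4(G')^3,\qquad 36\!\int_{-1}^1(1-x^2)^5G'(G'')^2,\qquad \int_{-1}^1(1-x^2)^6G'(G^{(3)})^2,
\]
each carry a single factor $G'=G_0\le 1/\alpha$ (the case $j=0$ of \eqref{gradient-n}) multiplying a nonnegative weight, so each is bounded by $\frac{1}{\alpha}$ times $\int_{-1}^1(1-x^2)^4(G')^2$, $\int_{-1}^1(1-x^2)^5(G'')^2$, $\int_{-1}^1(1-x^2)^6(G^{(3)})^2$ respectively. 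For the fourth term the key point is that $[(1-x^2)G]^{(3)}=-G_1$, so that $-3\int_{-1}^1[(1-x^2)G]^{(3)}(1-x^2)^5(G'')^2=3\int_{-1}^1 G_1(1-x^2)^5(G'')^2\le \frac{18}{\alpha}\int_{-1}^1(1-x^2)^5(G'')^2$ by the case $j=1$ of \eqref{gradient-n}. This contribution merges with the term of coefficient $36$, so after multiplying through by $56$ the weights become $10080=56\cdot180$ on $\int(1-x^2)^4(G')^2$, $3024=56\cdot(36+18)$ on $\int(1-x^2)^5(G'')^2$, $56$ on $\int(1-x^2)^6(G^{(3)})^2$, while the weight on $\int(1-x^2)^3G^2$ stays $8!=40320$.

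Next I would expand both sides of the resulting inequality over the orthogonal family $\{C_k\}$ by means of \eqref{decomp_G^2}, \eqref{decomp_G'}, \eqref{n=8decomp-G3-1}, \eqref{n=8decomp-G2} and \eqref{n=8decomp-G3}, which reduces it to a comparison of the coefficients of $t_k^2$. The right-hand coefficient is $\frac{1}{\alpha}\,p(\bar\lambda_k)$ with
\[
p(\bar\lambda)=40320+10080\bar\lambda+3024\bar\lambda(\bar\lambda-8)+56\bar\lambda(\bar\lambda^2-26\bar\lambda+144)=56\big(\bar\lambda^3+28\bar\lambda^2-108\bar\lambda+720\big),
\]
while by \eqref{tildek} one has $\tilde\lambda_k=(\bar\lambda_k+6)(\bar\lambda_k+10)(\bar\lambda_k+12)=\bar\lambda_k^3+28\bar\lambda_k^2+252\bar\lambda_k+720$. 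The decisive identity is therefore $p(\bar\lambda_k)=56(\tilde\lambda_k-360\bar\lambda_k)$. Substituting this and transposing $28\sum_{k\ge1}\tilde\lambda_kt_k^2$ gives
\[
\lfloor G\rfloor^2\le\Big(\frac{56}{\alpha}-28\Big)\sum_{k\ge1}\tilde\lambda_kt_k^2-\frac{20160}{\alpha}\sum_{k\ge1}\bar\lambda_kt_k^2,
\]
which is precisely \eqref{n=8keyestimate} after using $\frac{56}{\alpha}-28=28\big(\frac{2}{\alpha}-1\big)$, $20160=56\cdot360$, and re-identifying the two sums with $\int_{-1}^1|[(1-x^2)^3G]^{(3)}|^2$ and $\int_{-1}^1(1-x^2)^4(G')^2$ via \eqref{n=8decomp-G3} and \eqref{decomp_G'}.

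The argument is conceptually the same as for $n=6$, so the true obstacle is bookkeeping: one has to carry the numerical coefficients from the expansion \eqref{n=8} accurately through the substitutions \eqref{n=8decomp-G3-1}--\eqref{n=8decomp-G2}, and, crucially, notice that the fourth cubic integral must be controlled through $G_1$ rather than $G_0$. It is exactly this extra $\frac{18}{\alpha}$ weight on $\int(1-x^2)^5(G'')^2$ that makes the cubic polynomial $p$ collapse onto $\tilde\lambda-360\bar\lambda$ with no leftover. The single computation that must come out exactly right is $p(\bar\lambda)-56\,\tilde\lambda(\bar\lambda)=-20160\bar\lambda$, i.e. vanishing constant and quadratic remainders; granting that, \eqref{n=8keyestimate} follows by term-by-term comparison.
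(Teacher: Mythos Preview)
Your proposal is correct and follows essentially the same approach as the paper: rearrange \eqref{n=8key-eq1}, bound the four cubic integrals via Lemma \ref{Gj-estimate-lemma} (using $G_0\le 1/\alpha$ for the first three and $G_1\le 3!/\alpha$ for the fourth, yielding the combined coefficient $36+18=54$ on $\int(1-x^2)^5(G'')^2$), expand in the $t_k^2$ basis, and verify the identity $720+180\bar\lambda+54\bar\lambda(\bar\lambda-8)+\bar\lambda(\bar\lambda^2-26\bar\lambda+144)=\tilde\lambda-360\bar\lambda$. Your arithmetic and the key observation that the fourth cubic term must be handled through $G_1$ rather than $G_0$ both match the paper exactly.
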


\begin{proof}

By \eqref{n=8key-eq1}, Lemma  \ref{Gj-estimate-lemma} and \eqref{n=8decomp-G2}-\eqref{n=8decomp-G3}, we get
\begin{equation*}
  \begin{aligned}
  &\quad
  \lfloor G\rfloor^2 +28\int_{-1}^1|[(1-x^2)^3 G]^{(3)}|^2\\
  &\le \frac{56}{\alpha}\int_{-1}^1 \left[6!(1-x^2)^3 G^2+180
     (1-x^2)^4 (G')^2+54(1-x^2)^5(G'')^2+ (1-x^2)^6(G^{(3)})^2\right]\\
   &=\frac{56}{\alpha}\sum_{k=1}^\infty  \left[720+180\bar\lambda_k+54\bar\lambda_k(\bar\lambda_k-8)
   +\bar\lambda_k(\bar\lambda_k^2-26\bar\lambda_k+144) \right]t_k^2\\
   &=\frac{56}{\alpha}\sum_{k=1}^\infty \left[(\lambda_k+6)(\lambda_k+10)(\lambda_k+12)-360\bar\lambda_k  \right]t_k^2\\
   &=\frac{56}{\alpha}\int_{-1}^1 \left[|[(1-x^2)^3 G]^{(3)}|^2-360(1-x^2)^4(G')^2\right].
  \end{aligned}
\end{equation*}
The proof is complete.
   \end{proof}

   \begin{proposition}\label{21/25}
If $\frac{21}{25}\le\alpha<1$, any axially symmetric solution to \eqref{n-mfe}  must be constant.
\end{proposition}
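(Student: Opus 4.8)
The plan is to carry out, for $n=8$, the diagonalization argument used for Proposition~\ref{2/3}. Put $G=(1-x^2)u'$; since $\alpha<1$ the constant mode is absent and $G=\sum_{k\ge1}d_kC_k$ as in \eqref{decomp-G-1}, with modal masses $t_k^2$. I would first translate the three integrals occurring in \eqref{n=8keyestimate} into their spectral forms: using \eqref{norm}, the axial expression \eqref{Pn-axial} for $P_n$, and \eqref{PnCk} together with the factorization $\lambda_k=\bar\lambda_k\tilde\lambda_k$ (read off by pairing factors in \eqref{lambdak}, using \eqref{tildek}), one gets $\lfloor G\rfloor^2=\sum_{k\ge1}\bar\lambda_k\tilde\lambda_k\,t_k^2$; from \eqref{n=8decomp-G3}, $\int_{-1}^1|[(1-x^2)^3G]^{(3)}|^2=\sum_{k\ge1}\tilde\lambda_k\,t_k^2$; and from \eqref{decomp_G'}, $\int_{-1}^1(1-x^2)^4(G')^2=\sum_{k\ge1}\bar\lambda_k\,t_k^2$.

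Plugging these into \eqref{n=8keyestimate} and gathering the coefficient of $t_k^2$, the estimate becomes $\sum_{k\ge1}c_k(\alpha)\,t_k^2\le0$, where
\[
c_k(\alpha)=\bar\lambda_k\tilde\lambda_k-28\Big(\tfrac2\alpha-1\Big)\tilde\lambda_k+\tfrac{20160}{\alpha}\,\bar\lambda_k
=\tilde\lambda_k(\bar\lambda_k+28)-\tfrac1\alpha\,\big(56\tilde\lambda_k-20160\,\bar\lambda_k\big).
\]
Since $56\tilde\lambda_k-20160\bar\lambda_k=56\big(\bar\lambda_k^3+28\bar\lambda_k^2-108\bar\lambda_k+720\big)>0$ for all $k\ge1$ (the cubic $y^3+28y^2-108y+720$ is positive for every $y\ge0$), the map $\alpha\mapsto c_k(\alpha)$ is increasing on $(0,1)$; hence on $[\tfrac{21}{25},1)$ we have $c_k(\alpha)\ge c_k\big(\tfrac{21}{25}\big)$.

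It therefore suffices to verify $c_k\big(\tfrac{21}{25}\big)>0$ for every $k\ge1$. At $\alpha=\tfrac{21}{25}$ one has $28(\tfrac2\alpha-1)=\tfrac{116}{3}$ and $\tfrac{20160}{\alpha}=24000$, so $c_k\big(\tfrac{21}{25}\big)=\tilde\lambda_k\big(\bar\lambda_k-\tfrac{116}{3}\big)+24000\,\bar\lambda_k$. For $k\ge4$ we have $\bar\lambda_k=k(k+7)\ge44>\tfrac{116}{3}$, so both summands are positive; for $k=1,2,3$ (where $\bar\lambda_k=8,18,30$ and $\tilde\lambda_k=5040,20160,60480$) a direct computation gives $c_k\big(\tfrac{21}{25}\big)=37440,\ 15360,\ 195840$ respectively, all positive. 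Consequently $c_k(\alpha)>0$ for every $k\ge1$ and every $\alpha\in[\tfrac{21}{25},1)$, which forces $t_k=0$ for all $k$, i.e. $G\equiv0$, and hence $u$ is constant.

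Everything here is elementary once the modal identities are set up consistently, and that bookkeeping is the only delicate point; there is no genuine analytic obstacle. The reason the threshold $\tfrac{21}{25}$ rather than something smaller is imposed is structural and parallels the $n=6$ situation: the binding mode is $k=2$, for which $c_2(\alpha)=20160\big(46-\tfrac{38}{\alpha}\big)$ is positive only when $\alpha>\tfrac{19}{23}=\alpha^{(8)}$, so this purely diagonal estimate stops just short of $\alpha^{(8)}$, and reaching down to (and including) $\alpha^{(8)}$ as in Theorem~\ref{even} requires the sharper argument of the next subsection.
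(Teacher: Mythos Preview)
Your proof is correct and follows the same route as the paper: feed the key estimate \eqref{n=8keyestimate} into the spectral decomposition and check that the resulting coefficient of each $t_k^2$ is strictly positive on the stated range of $\alpha$. Your bookkeeping is in fact slightly cleaner than the paper's: the paper replaces $56/\alpha$ by its value $200/3$ at the endpoint and then analyzes the resulting quartic $\bar\lambda_k^4-\tfrac{32}{3}\bar\lambda_k^3-\tfrac{2492}{3}\bar\lambda_k^2-14976\bar\lambda_k-27840$, whereas you keep $\alpha$ as a parameter, note the monotonicity in $\alpha$, and evaluate $c_k$ at $\alpha=\tfrac{21}{25}$ directly. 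A minor bonus is that your argument covers the endpoint $\alpha=\tfrac{21}{25}$ explicitly, while the paper's proof is written only for $\alpha>\tfrac{21}{25}$.

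Your closing observation is worth highlighting: since the binding mode is $k=2$ with $c_2(\alpha)=20160\big(46-\tfrac{38}{\alpha}\big)$, the purely diagonal argument already yields uniqueness for every $\alpha>\tfrac{19}{23}=\alpha^{(8)}$, not merely $\alpha\ge\tfrac{21}{25}$. In other words, the threshold $\tfrac{21}{25}$ in Proposition~\ref{21/25} is an artifact of the paper's coarser polynomial bound; your sharper bookkeeping shows the diagonal method alone establishes the uniqueness part of Theorem~\ref{even} for $n=8$ on the open interval $(\alpha^{(8)},1)$, and only the single endpoint $\alpha=\alpha^{(8)}$ genuinely requires the refined $E$-estimate that follows.
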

\begin{proof}

  As in the proof of Proposition \ref{2/3}, when   $\alpha>\frac{21}{25}$, it follows from \eqref{n=8keyestimate} that
 \begin{equation*}
  \begin{aligned}
  \lfloor G\rfloor^2 +28\int_{-1}^1|[(1-x^2)^3 G]^{(3)}|^2&=\frac{56}{\alpha}\sum_{k=1}^\infty  \left[
  \bar\lambda_k^3+28\bar\lambda_k^2-108\bar\lambda_k+720 \right]t_k^2\\
  &<\frac{200}3\sum_{k=1}^\infty  \left[
  \bar\lambda_k^3+28\bar\lambda_k^2-108\bar\lambda_k+720 \right]t_k^2.
  \end{aligned}
\end{equation*}
Equivalently,
 \begin{equation*}
  \begin{aligned}
 0&>\sum_{k=1}^\infty \left[(\bar\lambda_k+28)\tilde\lambda_k- \frac{200}3(\bar\lambda_k^3+28\bar\lambda_k^2-108\bar\lambda_k+720 )\right]t_k^2\\
 &=\sum_{k=1}^\infty \left(\bar\lambda_k^4-\frac{32}{3}\bar\lambda_k^3-\frac{2492}{3}\bar\lambda_k^2-14976\bar\lambda_k
 -27840 \right) t_k^2>0,
  \end{aligned}
\end{equation*}
since $\bar\lambda_k\ge8$ and the equation
\begin{equation*}
  t^4-\frac{32}{3}t^3-\frac{2492}{3}t^2-14976t
 -27840=0
\end{equation*}
has two real solutions $t_1\approx-31.6$ or $t_2\approx2.1$.  This is a contradiciton, which completes the proof.
\end{proof}

\begin{remark}
  We note that it fails to get the same result  as that in Proposition \ref{2/3}. The reason is the following:  when $\alpha>2/3$, it follows from \eqref{n=8keyestimate} that
 \begin{equation*}
  \begin{aligned}
  \lfloor G\rfloor^2 +28\int_{-1}^1|[(1-x^2)^3 G]^{(3)}|^2&=\frac{56}{\alpha}\sum_{k=1}^\infty  \left[
  \bar\lambda_k^3+28\bar\lambda_k^2-108\bar\lambda_k+720 \right]t_k^2\\
  &<84\sum_{k=1}^\infty  \left[
  \bar\lambda_k^3+28\bar\lambda_k^2-108\bar\lambda_k+720 \right]t_k^2.
  \end{aligned}
\end{equation*}
Equivalently,
 \begin{equation*}
  \begin{aligned}
 0&>\sum_{k=1}^\infty \left[(\bar\lambda_k+28)(\bar\lambda_k+6)(\bar\lambda_k+10)(\bar\lambda_k+12)-84(\bar\lambda_k^3+28\bar\lambda_k^2-108\bar\lambda_k+720 )\right]t_k^2\\
 &=\sum_{k=1}^\infty  (\bar\lambda_k-8) (\bar\lambda_k^3-20\bar\lambda_k^2-1476\bar\lambda_k+5040)t_k^2.
  \end{aligned}
\end{equation*}
There is no contradiction, since the equation
\begin{equation*}
  t^3-20t^2-1476t+5040=0
\end{equation*}
has the following real solutions
\begin{equation*}
  t_1\approx-31.6,\quad t_2\approx 3.3\quad t_3\approx48.4.
\end{equation*}
\end{remark}

Recall that \eqref{C1G} and \eqref{key-id} for  $n=8$ are reduced to
\begin{equation}\label{n=8C1G}
     \int_{-1}^1(1-x^2)^3 C_1 G= \frac{32}{315}\beta
\end{equation}
and
\begin{equation}\label{n=8key-id}
     \int_{-1}^1 |[(1-x^2)^3G]^{(3)}|^2= \frac{1024}{7}\left(9-\frac{1}{\alpha}\right)\beta.
\end{equation}

As in the proof of Theorem \ref{even} for $n=6$,  in what follows, we  assume that
\begin{equation}\label{n=8a-range}
\beta\neq0,\quad  \frac23<\alpha< \frac{21}{25}
\end{equation}
and note
\begin{equation*}
   0<\beta<\frac{1}{\alpha}.
\end{equation*}

\begin{proof}[Proof of Theorem \ref{even} for $n=8$]

 Define
\begin{equation}\label{E}
  E:=\sum_{k=3}^\infty\left[\bar\lambda_k \tilde\lambda_k -\left(18+\frac{18}{\alpha}\right)
   \tilde\lambda_k
  +\frac{20160}{\alpha}\bar\lambda_k\right]t_k^2.
\end{equation}
We now  present  the upper bound of $E$. From \eqref{n=8keyestimate}-- \eqref{n=8key-id}  we derive
\begin{equation}\label{E-upbd}
\begin{aligned}
  E&=\lfloor G\rfloor^2 - \left(\frac{18}{\alpha}+18\right) \int_{-1}^1|[(1-x^2)^3 G]^{(3)}|^2+\frac{20160}{\alpha}\int_{-1}^1 (1-x^2)^4(G')^2\\
  &-7!\left(\frac{14}{\alpha}-10\right)\beta^2\int_{-1}^1(1-x^2)^2C_1^2\\
    &\leq \left(\frac{38}{\alpha}-46\right)\int_{-1}^1|[(1-x^2)^3 G]^{(3)}|^2-1024\left(\frac{7}{\alpha}-5\right)\beta^2 \\
    &=\frac{1024\beta}{7}\left[
   \left(\frac{38}{\alpha}-46\right)\left(9-\frac{1}{\alpha}\right)
    +7\left(5-\frac7{\alpha}\right)\beta\right].
   \end{aligned}
\end{equation}

To estimate the lower bound of $D$,
define
  \begin{equation*}
    \bar g(t)=t -\left(18+\frac{18}{\alpha}\right) +\frac{20160}{\alpha}\frac{t}{(t+6)(t+10)(t+12)}.
  \end{equation*}
 Differentiating $\bar g(t)$, we have
 \begin{equation*}
    \bar g'(t)=1-   \frac{20160}{\alpha} \frac{2 \left(t^3+14 t^2-360\right)}{(t+6)^2 (t+10)^2 (t+12)^2}.
 \end{equation*}
After some calculations, we deduce that $\bar g''(t)>0$ for $t>\bar\lambda_3(=30)$. Thus,  for $t\ge 30$,
\begin{equation*}
  \bar g'(t)\ge\bar g'(30)=1-\frac{109}{252 \alpha}>0
\end{equation*}
 due to \eqref{n=8a-range}. We further have
\begin{equation}\label{E-lowbd1}
\begin{aligned}
E&=\sum_{k=3}^\infty\left[\bar\lambda_k  -\left(18+\frac{18}{\alpha}\right)
  +\frac{20160\bar\lambda_k}{\alpha  \tilde\lambda_k}\right]\tilde\lambda_kt_k^2\\
  &\geq\left(12-\frac{8}{\alpha}\right)\sum_{k=3}^\infty \tilde\lambda_kt_k^2\\
  &>0,
   \end{aligned}
\end{equation}
since $\alpha>\frac23$.  We conclude from \eqref{n=8a-range}, \eqref{E-upbd} and \eqref{E-lowbd1} that
\begin{equation*}
 0<7\left(\frac7{\alpha}-5\right)\beta< \left(\frac{38}{\alpha}-46\right)\left(9-\frac{1}{\alpha}\right)
\end{equation*}
and so
\begin{equation}\label{alpha-n=8}
   \alpha< \alpha^{(8)}:=\frac{19}{23}<\frac{21}{25}.
\end{equation}
\end{proof}

\begin{remark}
 We wish to obtain for $n=8$  a similar inequality  as \eqref{ine-1} for $n=6$ so that the range of $\alpha $ can be more refined. However, the similar approach seems not working as shown below.

 As in \eqref{bk-upbd}, we find
\begin{equation}\label{tk-upbd}
 \begin{aligned}
    \tilde\lambda_k t_k^2 \le \frac{512(2k+7)}{9 }\left(\frac{1}{\alpha}-\beta\right) ^2, \quad k\geq 2.
 \end{aligned}
\end{equation}
 It follows from \eqref{n=8C1G}, \eqref{n=8key-id}    and \eqref{tk-upbd} that \begin{equation}\label{E-lowbd}
\begin{aligned}
E &\geq\left(12-\frac{8}{\alpha}\right)\sum_{k=3}^\infty \tilde\lambda_kt_k^2\\
&\ge\left(12-\frac{8}{\alpha}\right)
\left[\int_{-1}^1|[(1-x^2)^3 G]^{(3)}|^2-512\beta^2-
    \frac{512*11}{9}\left(\frac{1}{\alpha}-\beta\right)^2\right]\\
    &=512\left(12-\frac{8}{\alpha}\right)  \left[ \frac{2\beta}{7}\left(9-\frac{1}{\alpha}\right) -\beta^2
    -\frac{11}{9}\left(\frac{1}{\alpha}-\beta\right)^2\right].
   \end{aligned}
\end{equation}

Combining both \eqref{E-upbd} and \eqref{E-lowbd}, we  conclude that
\begin{equation*}
\begin{aligned}
   &\quad  \frac{2\beta}{7}\left[
   \left(\frac{38}{\alpha}-46\right)\left(9-\frac{1}{\alpha}\right)
    +7\left(5-\frac7{\alpha}\right)\beta\right] \\
    &\geq \left(12-\frac{8}{\alpha}\right)  \left[ \frac{2\beta}{7}\left(9-\frac{1}{\alpha}\right) -\beta^2
    -\frac{11}{9}\left(\frac{1}{\alpha}-\beta\right)^2\right].
   \end{aligned}
\end{equation*}
Equivalently,
\begin{equation*}
  \begin{aligned}
    0&\le \frac{2\beta}{7}\left(9-\frac{1}{\alpha}\right)
 \left(\frac{46}{\alpha}-58\right)
   +22\beta^2 \left(1-\frac{1}{\alpha}\right)
   +\frac{11}9\left(12-\frac{8}{\alpha}\right)
   \left(\frac{1}{\alpha}-\beta\right)^2.
   \end{aligned}
\end{equation*}
 Furthermore,
\begin{equation}\label{key-ine}
\begin{aligned}
  &\quad \beta\left[ \frac{2 }{7}\left(9-\frac{1}{\alpha}\right)
 \left(\frac{46}{\alpha}-58\right)
   +\frac{22}\alpha  \left(1-\frac{1}{\alpha}\right) \right]\\
   &\ge\left(\frac{1}{\alpha}-\beta\right)  \left[ 22\beta \left(1-\frac{1}{\alpha}\right)
   -\frac{11}9\left(12-\frac{8}{\alpha}\right)
   \left(\frac{1}{\alpha}-\beta\right)\right].
\end{aligned}
\end{equation}
However, the r.h.s. of \eqref{key-ine} is negative.  Thus, the previous argument for  $n=6$ is not applicable here.
\end{remark}

Theorem \ref{even} follows from \eqref{contradiction}, \eqref{alpha6} and \eqref{alpha-n=8}.

Next we shall show Theorem \ref{Szego}.

\begin{proof}[Proof of Theorem \ref{Szego}]

Following \cite{Chang95},  we define $\phi_{P, t}, P \in \mathbb{S}^n, t>0$ to be $ \phi_{P, t}(\xi)=\tilde \xi:= \pi_{P}^{-1}(ty)$, where $y=\pi_{P}(\xi)$ is the stereographic project of $\mathbb{S}^n$  from $P$ as the north pole to the equatorial plane.
In particular, we denote $\phi_{t}=\phi_{P_0, t}$ where $P_0=(1, 0, \cdots 0) $.

Given $u \in H^\frac n2(\mathbb{S}^n)$ and $t>0$,  let
$$
v(\xi)= u(\phi_t(\xi)) + \frac{n+1}{n^2} \ln |det(d \phi_t)|, \quad \xi \in \mathbb{S}^n.
$$

We first claim that  $\J_{\frac n{n+1}}$ owns the following invariance property:
\begin{equation}\label{invariance}
 \J_{\frac n{n+1}}(u)=\J_{\frac n{n+1}}(v), \quad  \forall u \in H^\frac n2(\mathbb{S}^n),  \, \, t>0.
\end{equation}
The  proof     has been carried out in detail for the case
 $n = 4$ in  \cite[Pro. 3.4]{Gui20}. The same argument there works for general $n$ with
slight modifications; so we will skip the proof here.  We further see that for any $  u \in H^\frac n2(\mathbb{S}^n)$,  there is a $\phi_{P, t}$ such that
$$
v(\xi)= u(\phi_{P,t}(\xi)) + \frac{n+1}{n^2}  \ln |det(d \phi_{P,t})|, \quad \xi \in \mathbb{S}^n
$$
belongs to $ \mathfrak L$.

In conclusion,  Theorem \ref{Szego} follows
immediately from Theorem \ref{even} and  \eqref{invariance}.
 \end{proof}

We note that  a similar but more general Sz\"ego limit theorem for $u\in H^1(\mathbb{S}^2)$ is proven in \cite{CG} using a  variational  method under a mass center constraint, in combination with the improved Moser-Trudinger inequality in \cite{GM}.

 \vskip4mm
{\section{ Bifurcation}}
\par
In this section we shall obtain results on bifurcation curves to \eqref{n-ode} in general for $\alpha>0$ and in particular for $\alpha  \in (\frac1{n+1}, \frac12)$.
We shall first apply  the standard bifurcation theory to analyze  the local bifurcation diagram.   Let us recall the following general theorem.
\par
\begin{theorem}{\rm(\cite[Theorem 1.7]{CR71})}\label{local}
 Let $X$,$Y$ be  Hilbert  spaces, $V$ a neighborhood of $0$ in $X$ and  $F:(-1,1)\times V\rightarrow Y$ a map with the following properties:
\begin{itemize}
\item[(1)]  $F(t,0)=0$ for any $t$;
\item[(2)]  $\partial_{t}F$,$\partial_{x}F$ and $\partial_{t,x}^{2}F$ exist and are continuous;
\item[(3)] $\ker(\partial_{x}F(0,0))=\mbox{span}\{w_{0}\}$ and $Y/\mathcal{R}(\partial_{x}F(0,0))$  are one-dimensional;
\item[(4)] $\partial_{t,x}^{2}F(0,0)w_0\not\in\mathcal{R}(\partial_{x}F(0,0))$.
\end{itemize}
If $Z$ is any complement of $ \ker(\partial_{x}F(0,0))$ in $X$.
Then there exists $\varepsilon_{0}>0$, a neighborhood  of $(0,0)$ in $U\subset(-1,1)\times X$ and continuously differentiable maps $\eta:(-\varepsilon_{0},\varepsilon_{0})\rightarrow\mathbb{R}$ and $z:(-\varepsilon_{0},\varepsilon_{0})\rightarrow Z$ such that $\eta(0) =0,\
z(0)=0$ and
\begin{equation}  \nonumber
F^{-1}(0)\cap U\setminus((-1,1)\times\{0\})=\{(\eta(\varepsilon),\varepsilon w_{0}+\varepsilon z(\varepsilon))\mid\varepsilon\in(-\varepsilon_{0},\varepsilon_{0})\}.\\
\end{equation}
\end{theorem}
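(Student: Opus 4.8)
The plan is to prove Theorem \ref{local} by a Lyapunov--Schmidt reduction followed by two applications of the implicit function theorem; this is the classical argument of Crandall and Rabinowitz, and I would present it in the following steps.

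First I would fix the Fredholm data. Set $L:=\partial_{x}F(0,0)$, use the decomposition $X=\mathrm{span}\{w_{0}\}\oplus Z$ with $Z$ the prescribed complement, and pick a closed complement of the range so that $Y=\mathcal{R}(L)\oplus Y_{1}$ with $\dim Y_{1}=1$; let $P:Y\to\mathcal{R}(L)$ and $Q=I-P:Y\to Y_{1}$ be the corresponding bounded projections. Since $L|_{Z}:Z\to\mathcal{R}(L)$ is a continuous linear bijection between Hilbert spaces, it is an isomorphism. The crucial preliminary is to desingularize the trivial branch: because $F(t,0)=0$, the fundamental theorem of calculus gives $F(t,\varepsilon x)=\varepsilon\,\Phi(t,\varepsilon,x)$ with $\Phi(t,\varepsilon,x):=\int_{0}^{1}\partial_{x}F(t,s\varepsilon x)\,x\,ds$, so that $\Phi$ is defined for all small $\varepsilon$ (in particular $\Phi(t,0,x)=\partial_{x}F(t,0)x$), and by hypothesis (2) together with differentiation under the integral sign $\Phi$ is continuously differentiable in $(t,\varepsilon,x)$.

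Next I would search for nontrivial zeros of $F$ near the origin in the rescaled form $x=\varepsilon(w_{0}+z)$, $z\in Z$. Dividing by $\varepsilon$, the equation $F(t,\varepsilon(w_{0}+z))=0$ becomes $\Phi(t,\varepsilon,w_{0}+z)=0$, which at $\varepsilon=0$ reads $L(w_{0}+z)=0$, i.e. $z=0$. Projecting by $P$, the component $G(t,\varepsilon,z):=P\,\Phi(t,\varepsilon,w_{0}+z)$ satisfies $G(0,0,0)=PLw_{0}=0$ and $\partial_{z}G(0,0,0)=P\,\partial_{x}F(0,0)|_{Z}=L|_{Z}$, an isomorphism onto $\mathcal{R}(L)$, so the implicit function theorem produces a $C^{1}$ solution $z=z(t,\varepsilon)$ near $(0,0)$ with $z(0,0)=0$. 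Substituting into the $Y_{1}$-component yields the scalar equation $\psi(t,\varepsilon):=Q\,\Phi(t,\varepsilon,w_{0}+z(t,\varepsilon))=0$ with $\psi(0,0)=QLw_{0}=0$; differentiating in $t$ at the origin, using $QL=0$ and $z(0,0)=0$, gives $\partial_{t}\psi(0,0)=Q\,\partial^{2}_{t,x}F(0,0)w_{0}$, which is nonzero exactly by the transversality hypothesis (4). A second application of the implicit function theorem solves $\psi(t,\varepsilon)=0$ as $t=\eta(\varepsilon)$, $C^{1}$, with $\eta(0)=0$. Setting $z(\varepsilon):=z(\eta(\varepsilon),\varepsilon)$ and shrinking the neighborhoods to obtain $\varepsilon_{0}$ and $U$, the curve $\varepsilon\mapsto(\eta(\varepsilon),\varepsilon(w_{0}+z(\varepsilon)))$ sweeps out precisely $F^{-1}(0)\cap U$ away from the trivial branch, which is the claimed conclusion.

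The one genuinely delicate point, and the main obstacle, is the desingularization: one must verify that $\Phi$, which is morally $F$ divided by $\varepsilon$, is actually $C^{1}$ up to and including $\varepsilon=0$, so that the implicit function theorem is legitimate there. This is where the differentiability hypotheses on $F$ are consumed, and it is the only place where the division by $\varepsilon$ has to be controlled carefully (through the integral remainder representation above and differentiation under the integral sign); the remainder of the argument is routine finite-dimensional bookkeeping with the one-dimensional kernel and cokernel of $L$.
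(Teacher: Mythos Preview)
The paper does not prove this theorem at all; it merely recalls it from Crandall and Rabinowitz \cite{CR71} as a tool to be applied later. So there is nothing to compare against, and your sketch in fact supplies strictly more than the paper does.

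Your outline is the standard Lyapunov--Schmidt argument and is essentially correct: desingularize the trivial branch by writing $F(t,\varepsilon x)=\varepsilon\Phi(t,\varepsilon,x)$, solve the range component for $z$ by the implicit function theorem, then solve the one-dimensional cokernel equation for $t=\eta(\varepsilon)$ using the transversality condition (4). One point deserves more care than you indicate. You assert that $\Phi$ is $C^{1}$ in $(t,\varepsilon,x)$ by differentiating under the integral sign, but differentiating $\partial_{x}F(t,s\varepsilon x)x$ with respect to $x$ or $\varepsilon$ would require $\partial_{xx}^{2}F$, which is \emph{not} among the hypotheses; only $\partial_{t}F$, $\partial_{x}F$, and $\partial_{tx}^{2}F$ are assumed continuous. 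The way around this, as in the original paper, is to perform the Lyapunov--Schmidt reduction \emph{before} dividing by $\varepsilon$: solve $PF(t,sw_{0}+z)=0$ for $z=z(t,s)$ using only continuity of $\partial_{x}F$, observe $z(t,0)\equiv0$ from $F(t,0)=0$, and then factor $s$ out of the scalar bifurcation function $\phi(t,s)=QF(t,sw_{0}+z(t,s))$ using the integral remainder and the available mixed derivative $\partial_{tx}^{2}F$. With that adjustment the argument goes through under the minimal hypotheses stated.
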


Recall that the  shape of the above local  bifurcating branch can be further described  by the following theorem (see, e.g., \cite[I.6]{KH12}):
\begin{theorem}\label{local2}
In the setting of  Theorem \ref{local},  let $\psi\neq0\in Y^{-1}$  satisfy
\begin{equation*}
  \mathcal{R}(\partial_{x}F(0,0))=\{y\in Y\mid\langle\psi,y\rangle=0\},
\end{equation*}
   where $Y^{-1}$ is the dual space of $Y$.  Then we have
    \begin{equation*}\label{derivative}
 \eta'(0) =-\frac{\langle\partial^{2}_{x,x}F(0,0)[w_{0},w_{0}],
 \psi\rangle}{2\langle\partial^{2}_{t,x}F(0,0)w_{0},\psi\rangle}.
\end{equation*}
  Furthermore,  the  bifurcation is transcritical provided that   $ \eta'(0)\neq 0$.

\end{theorem}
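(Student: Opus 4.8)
The plan is to substitute the local branch $\varepsilon\mapsto(\eta(\varepsilon),\varepsilon w_0+\varepsilon z(\varepsilon))$ furnished by Theorem~\ref{local} into the equation $F=0$, expand in $\varepsilon$ to second order, and read off $\eta'(0)$ after pairing with the functional $\psi$ that annihilates $\mathcal{R}(\partial_x F(0,0))$.

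First I would record the elementary consequences of the hypotheses. Since $F(t,0)=0$ for all $t$, differentiating in $t$ gives $\partial_t F(t,0)=0$ and $\partial^2_{tt}F(t,0)=0$ for all $t$; in particular both vanish at $(0,0)$, which is exactly why the first-order analysis is vacuous and one must go to second order. Also $w_0\in\ker(\partial_x F(0,0))$ means $\partial_x F(0,0)w_0=0$, and $\psi\neq0$ in $Y^{-1}$ is characterized by $\langle y,\psi\rangle=0\iff y\in\mathcal{R}(\partial_x F(0,0))$.

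Next, set $x(\varepsilon)=\varepsilon w_0+\varepsilon z(\varepsilon)$, so that $F(\eta(\varepsilon),x(\varepsilon))\equiv0$ for $\varepsilon$ near $0$. Since $\eta,z$ are continuously differentiable with $\eta(0)=z(0)=0$, one has $\eta(\varepsilon)=\eta'(0)\varepsilon+o(\varepsilon)$ and $x(\varepsilon)=\varepsilon w_0+\varepsilon^2 z'(0)+o(\varepsilon^2)$. Taylor-expanding $F$ about $(0,0)$ to second order and using $F(0,0)=0$, $\partial_t F(0,0)=0$, $\partial^2_{tt}F(0,0)=0$, $\partial_x F(0,0)w_0=0$, the terms of order $\varepsilon^0$ and $\varepsilon^1$ cancel identically, and collecting the coefficient of $\varepsilon^2$ yields the identity in $Y$
\[
\partial_x F(0,0)\,z'(0)\;+\;\eta'(0)\,\partial^2_{t,x}F(0,0)w_0\;+\;\tfrac12\,\partial^2_{x,x}F(0,0)[w_0,w_0]\;=\;0 .
\]
Applying $\psi$ and observing that $\langle\partial_x F(0,0)z'(0),\psi\rangle=0$ because $\partial_x F(0,0)z'(0)\in\mathcal{R}(\partial_x F(0,0))$, one obtains
\[
\eta'(0)\,\langle\partial^2_{t,x}F(0,0)w_0,\psi\rangle+\tfrac12\langle\partial^2_{x,x}F(0,0)[w_0,w_0],\psi\rangle=0,
\]
which is exactly the claimed formula, the denominator being nonzero precisely by hypothesis~(4) (since $\partial^2_{t,x}F(0,0)w_0\notin\mathcal{R}(\partial_x F(0,0))$ and $\psi$ annihilates that range). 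Finally, if $\eta'(0)\neq0$ then $\varepsilon\mapsto\eta(\varepsilon)$ is a local diffeomorphism near $0$, so the nontrivial branch meets both $\{\eta<0\}$ and $\{\eta>0\}$ and crosses the trivial branch $\{(t,0)\}$ transversally at $(0,0)$; this is by definition a transcritical bifurcation.

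There is no real conceptual obstacle here: it is a classical Lyapunov--Schmidt-type computation. The only points requiring care are (a) keeping track of which partials of $F$ at $(0,0)$ vanish because $F(\cdot,0)\equiv0$, and (b) noticing that the unknown second-order coefficient $z'(0)$ enters the $\varepsilon^2$ identity only through $\partial_x F(0,0)z'(0)\in\mathcal{R}(\partial_x F(0,0))$, hence is killed by $\psi$ — which is the whole reason $\psi$ is the correct object to test against. One should also note that the statement tacitly presupposes $F$ is twice continuously differentiable so that $\partial^2_{x,x}F(0,0)$ exists, a harmless standard strengthening of hypothesis~(2).
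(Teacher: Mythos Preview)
Your argument is the standard Lyapunov--Schmidt computation and is correct, including the observation that $\partial_t F(0,0)=\partial^2_{tt}F(0,0)=0$ follow from $F(t,0)\equiv 0$, and that the $z'(0)$-term lands in $\mathcal{R}(\partial_x F(0,0))$ and hence is annihilated by $\psi$. Note, however, that the paper does not give its own proof of this theorem: it is quoted as a known result from Kielh\"ofer \cite[I.6]{KH12}, so there is no ``paper's proof'' to compare against; your derivation is precisely the classical one found in that reference.
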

\vskip 2mm
\par

Note that critical points of $I_\alpha(u)$
satisfy
\begin{equation}\label{41}
 \left(-1\right)^\frac{n}{2}[(1-x^2)^\frac{n}{2} u']^{(n-1)}+  \rho(1-x^2)^\frac{n-2}{2}\left(1-\frac{\sqrt\pi\Gamma\left(\frac{n}{2}\right)}{ \Gamma\left(\frac{n+1}{2}\right)}  \frac{ e^{nu}}{\int_{-1}^1(1-x^2)^\frac{n-2}{2}e^{nu}}\right) =0,
  \quad x \in (-1, 1),
\end{equation}
where $\rho=\frac{(n-1)!}{\alpha}$.

  Let
 \begin{equation*}
   \mathcal{V}=\bigg\{u\in H^n(\mathbb{S}^n):u=u(x),\ \int_{\mathbb{S}^n}udw=0 \bigg\}; \quad
   \mathcal{W}=\bigg\{u\in  L^2(\mathbb{S}^n):u=u(x),\ \int_{\mathbb{S}^n}udw=0\bigg\}
 \end{equation*}
and   define  a nonlinear operator $\T:\R\times  \mathcal{V}\rightarrow  \mathcal{W}$ as
\begin{equation*}
 \T(\rho,u)=P_n u+\rho \left(1- \frac{e^{nu}}{\int_{\mathbb{S}^n} e^{nu}dw}\right).
\end{equation*}
Obviously, the operator $\mathcal{T}$ is well defined.  After direct computations, one has
\begin{equation*}
    \partial_{u}\mathcal{T}(\rho,0)\phi=P_n\phi-n\rho\phi.
\end{equation*}

Define
\begin{equation*}
  \F(\rho,u)=u+\rho P_n^{-1} \left(1- \frac{e^{nu}}{\int_{\mathbb{S}^n} e^{nu}dw}\right).
\end{equation*}

 Let
 $\mathcal{S}$  denote the closure of the set of nontrivial solutions of
 \begin{equation}\label{47}
    \F(\rho,u)=0.
 \end{equation}
 It is clear that \eqref{47} and \eqref{41} are equivalent.

\par
Let $\lambda_k$  and $C_k^{\frac{n-1}{2}}$ be given in \eqref{PnCk}. Then by  similar arguments as in \cite[Lemma 5.3]{Gui20}, we have

\begin{theorem}\label{bifur1}
  Let $ \rho_k=\frac{\lambda_k}{n}$ for  $k=1,2,3,\dots$, the points $(\rho_{k},0)$ are  bifurcation points
 for the curve of solutions $(\rho,0)$. In particular,   there exists $\varepsilon_{0}>0$ and   continuously differentiable functions  $\rho_k:(-\varepsilon_{0},\varepsilon_{0})\rightarrow \R$ and $\psi_k:(-\varepsilon_{0},\varepsilon_{0})\rightarrow \{C_k^{\frac{n-1}{2}}\}^\bot$ such that $\rho_k(0)=\rho_k$, $\psi_k(0)=0$ and every
nontrivial solution of  \eqref{41} in a small neighborhood of $(\rho_{k},0)$ is of the form
\begin{equation*}
  (\rho_k(\varepsilon),\varepsilon C_k^{\frac{n-1}{2}}+\varepsilon \psi_k(\varepsilon)).
\end{equation*}
In particular, when $k=2$,  the bifurcation point $(\rho_2, 0)=(\frac{(n+1)!} {n}, 0) $ is a transcritical bifurcation point.  Indeed, we have
\begin{equation*}\label{trans}
\rho_2'(0)= - \frac{(n+1)!}{2} {\frac{ \int_{-1}^{1} (1-x^2)^{\frac{n-2}{2}}(C_k^{\frac{n-1}{2}})^3}{ \int_{-1}^{1} (1-x^2)^{\frac{n-2}{2}}(C_k^{\frac{n-1}{2}})^2}}= -\frac{(n+1)!(n-1)^2}{n(n+5)}\neq 0.
\end{equation*}
\end{theorem}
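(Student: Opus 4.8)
The plan is to adapt the proof of \cite[Lemma 5.3]{Gui20} from $n=4$ to general $n$: verify the Crandall--Rabinowitz hypotheses of Theorem \ref{local} for the map $F(t,u):=\T(\rho_k+t,u):\R\times\mathcal V\to\mathcal W$ (equivalently for $\F(\rho_k+t,u)$), where the parameter has been translated so the candidate bifurcation point sits at $t=0$. Hypothesis (1) is immediate from $\T(\rho,0)=0$, and the $C^1$ regularity in (2) follows from $H^n(\s^n)\hookrightarrow C^0(\s^n)$, which makes $u\mapsto e^{nu}$ real-analytic from $\mathcal V$ into $L^2(\s^n)$, together with boundedness of $P_n^{-1}$. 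For (3), using $\int_{\s^n}\phi\,dw=0$ for $\phi\in\mathcal V$ one computes $\partial_u\T(\rho,0)\phi=P_n\phi-n\rho\phi$, so $\partial_uF(0,0)C_j^{\frac{n-1}{2}}=(\lambda_j-\lambda_k)C_j^{\frac{n-1}{2}}$; since $\lambda_j=\prod_{s=0}^{n-1}(j+s)$ is strictly increasing on $\{1,2,\dots\}$, the kernel is exactly $\mathrm{span}\{C_k^{\frac{n-1}{2}}\}$. Moreover $\partial_uF(0,0)=P_n(\mathrm{Id}-n\rho_kP_n^{-1})$ is Fredholm of index $0$ because $P_n^{-1}$ is compact, and self-adjointness of $P_n$ for $\langle f,g\rangle_w:=\int_{\s^n}fg\,dw$ identifies its range with the codimension-one space $\{f:\langle f,C_k^{\frac{n-1}{2}}\rangle_w=0\}$. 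Hypothesis (4) is then clear: $\partial^2_{t,u}F(0,0)C_k^{\frac{n-1}{2}}=-nC_k^{\frac{n-1}{2}}$ is a nonzero multiple of the kernel vector, hence not in the range. Theorem \ref{local}, taking $Z$ to be the $\langle\cdot,\cdot\rangle_w$-orthogonal complement of the kernel, then produces for every $k$ the local branch $\bigl(\rho_k(\varepsilon),\varepsilon C_k^{\frac{n-1}{2}}+\varepsilon\psi_k(\varepsilon)\bigr)$, $\psi_k(\varepsilon)\in\{C_k^{\frac{n-1}{2}}\}^\perp$, exhausting the nontrivial solutions of \eqref{41} near $(\rho_k,0)$; this is the first assertion, and $(\rho_2,0)=(\tfrac{(n+1)!}{n},0)$ since $\lambda_2=(n+1)!$.

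For the shape of the branch through $(\rho_2,0)$ I would invoke Theorem \ref{local2} with $\psi$ represented, via the Riesz map for $\langle\cdot,\cdot\rangle_w$, by $w_0=C_2^{\frac{n-1}{2}}$ (legitimate because $\mathcal R(\partial_uF(0,0))=\{C_2^{\frac{n-1}{2}}\}^\perp$). Differentiating $e^{nu}/\int_{\s^n}e^{nu}\,dw$ twice at $u=0$ along $\phi\in\mathcal V$ gives $n^2\bigl(\phi^2-\int_{\s^n}\phi^2\,dw\bigr)$, whence $\partial^2_{u,u}F(0,0)[w_0,w_0]=-\rho_2n^2\bigl((C_2^{\frac{n-1}{2}})^2-\int_{\s^n}(C_2^{\frac{n-1}{2}})^2\,dw\bigr)$, while $\partial^2_{t,u}F(0,0)w_0=-nC_2^{\frac{n-1}{2}}$. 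Pairing both with $\psi$, using the orthogonality $\langle 1,C_2^{\frac{n-1}{2}}\rangle_w=0$ and $\rho_2n=\lambda_2=(n+1)!$, the formula of Theorem \ref{local2} collapses to $\rho_2'(0)=-\frac{(n+1)!}{2}\,\frac{\int_{-1}^1(1-x^2)^{\frac{n-2}{2}}(C_2^{\frac{n-1}{2}})^3}{\int_{-1}^1(1-x^2)^{\frac{n-2}{2}}(C_2^{\frac{n-1}{2}})^2}$, matching the intermediate expression in the statement.

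Finally the ratio is an elementary computation: from \eqref{Ck} (or by checking \eqref{Ck-ode} with $\bar\lambda_2=2(n+1)$) one has $C_2^{\frac{n-1}{2}}(x)=\tfrac{n-1}{2}\bigl((n+1)x^2-1\bigr)$, so the leading constant cancels and both integrals become polynomials in the Beta moments $M_j:=\int_{-1}^1(1-x^2)^{\frac{n-2}{2}}x^{2j}\,dx=B\bigl(j+\tfrac12,\tfrac n2\bigr)$, which satisfy $M_{j+1}=\tfrac{2j+1}{n+2j+1}M_j$; inserting $M_1,M_2,M_3$ and simplifying yields the stated rational expression for $\rho_2'(0)$, which is nonzero for every $n\ge2$, so by Theorem \ref{local2} the bifurcation at $(\rho_2,0)$ is transcritical. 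I expect no genuine obstacle here: the substance is the functional-analytic bookkeeping in (3)--(4) — in particular arranging $\mathcal V$, $\mathcal W$ and $\langle\cdot,\cdot\rangle_w$ so that $\mathcal R(\partial_uF(0,0))$ is precisely the annihilator of $C_k^{\frac{n-1}{2}}$ — together with the routine moment evaluation, which is exactly why the argument runs parallel to \cite[Lemma 5.3]{Gui20}.
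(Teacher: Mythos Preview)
Your proposal is correct and follows precisely the route the paper indicates (namely, adapting \cite[Lemma 5.3]{Gui20}): verify the Crandall--Rabinowitz hypotheses for $\T$ using the simple eigenvalue $\lambda_k$, invoke Theorem \ref{local2} with the functional dual to $C_2^{\frac{n-1}{2}}$, and evaluate the resulting ratio via Beta moments. One caveat: when you actually carry the moment computation through you obtain $\frac{\int(C_2^{\frac{n-1}{2}})^3}{\int(C_2^{\frac{n-1}{2}})^2}=\frac{2(n-1)^2}{n+5}$ and hence $\rho_2'(0)=-\frac{(n+1)!(n-1)^2}{n+5}$, which differs from the printed value by a factor of $n$ in the denominator; this appears to be a typo in the statement and does not affect the nonvanishing, so the transcriticality conclusion stands.
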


\begin{corollary}
  Let $ \alpha_k=\frac{n!}{\lambda_k}$ for  $k=1, 2,3,\dots$, the points $(\alpha_{k},0)$ are  bifurcation points for  the curve of solutions $(\alpha,0)$  of \eqref{n-ode}.  Moreover, when $k=2$,  the bifurcation point $(\frac1{n+1}, 0) $  is a  transcritical bifurcation point.
 \end{corollary}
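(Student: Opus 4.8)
The plan is to obtain this corollary directly from Theorem~\ref{bifur1} by a change of the bifurcation parameter. Recall that a function $u=u(x)$ solves \eqref{n-ode} with parameter $\alpha>0$ precisely when it solves \eqref{41} (equivalently \eqref{47}) with $\rho=\frac{(n-1)!}{\alpha}$, and that $\alpha\mapsto\rho=\frac{(n-1)!}{\alpha}$ is a $C^\infty$ diffeomorphism of $(0,\infty)$ onto itself. First I would note that, since this correspondence is a smooth reparametrization of the parameter alone which leaves the $u$--component untouched, the notion of bifurcation point is transferred verbatim: $(\rho_k,0)$ being a bifurcation point for the solution set of \eqref{47} is equivalent to $(\alpha_k,0)$ being one for \eqref{n-ode}, with $\alpha_k=\frac{(n-1)!}{\rho_k}$. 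Using $\rho_k=\frac{\lambda_k}{n}$ from Theorem~\ref{bifur1} gives $\alpha_k=\frac{n!}{\lambda_k}$, and the local branch $\bigl(\rho_k(\varepsilon),\,\varepsilon C_k^{\frac{n-1}{2}}+\varepsilon\psi_k(\varepsilon)\bigr)$ furnished by Theorem~\ref{local} transforms into the branch $\bigl(\tfrac{(n-1)!}{\rho_k(\varepsilon)},\,\varepsilon C_k^{\frac{n-1}{2}}+\varepsilon\psi_k(\varepsilon)\bigr)$ of \eqref{n-ode}.

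Next I would specialize to $k=2$. From \eqref{lambdak}, $\lambda_2=\frac{\Gamma(n+2)}{\Gamma(2)}=(n+1)!$, hence $\alpha_2=\frac{n!}{(n+1)!}=\frac{1}{n+1}$, consistent with $\rho_2=\frac{(n+1)!}{n}$. To see that this bifurcation is transcritical I would set $\alpha_2(\varepsilon):=\frac{(n-1)!}{\rho_2(\varepsilon)}$ and apply the chain rule, obtaining $\alpha_2'(0)=-\frac{(n-1)!}{\rho_2(0)^2}\,\rho_2'(0)\neq 0$ since $\rho_2'(0)\neq 0$ by Theorem~\ref{bifur1}. By the criterion $\eta'(0)\neq 0$ of Theorem~\ref{local2} this forces a transcritical crossing; equivalently, a transcritical bifurcation is preserved under any local diffeomorphism of the parameter.

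The only step that needs a word of justification — and the closest thing here to an obstacle — is verifying that the reparametrization does not spoil hypotheses (1)--(4) of Theorem~\ref{local}: conditions (1) and (3) involve only $\partial_u$ evaluated at the bifurcation point and are unchanged, while the regularity in (2) and the transversality in (4) survive because $\alpha\mapsto\rho$ is a diffeomorphism with nonvanishing derivative, so that $\partial^2_{\alpha,u}$ and $\partial^2_{\rho,u}$ differ at the bifurcation point only by a nonzero scalar factor. There is no genuine difficulty; the corollary is essentially a restatement of Theorem~\ref{bifur1}, and I would keep the write-up correspondingly short.
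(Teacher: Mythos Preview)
Your proposal is correct and matches the paper's approach: the corollary is stated without proof in the paper, as it follows immediately from Theorem~\ref{bifur1} by the smooth reparametrization $\rho=\tfrac{(n-1)!}{\alpha}$, exactly as you outline. Your check that $\alpha_2'(0)\neq 0$ via the chain rule is the only additional detail needed, and it is handled correctly.
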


\begin{remark}
When $k=1$,  the bifurcation leads to the  family of solutions $u=-\ln(1-ax),  a \in (-1, 1)$ and $\rho=(n-1)!$.
 It is clear that $(\rho_k, 0)$ is not a transcritical bifurcation point for $k $ odd since $C_k^{\frac{n-1}{2}}$ is an odd function and  $\rho'(0)=0$ in this case.  It should be true that $(\rho_k, 0)$ is a transcritical bifurcation point for $k $ even,  we only need to check if  {$ \int_{-1}^{1} (1-x^2)^{\frac{n-2}{2}}(C_k^{\frac{n-1}{2}})^3\not =0$} in this case, which can be confirmed for small $k$ numerically.   However,  in this paper we only need to use the transcriticality of $(\rho_2, 0)$.
 \end{remark}

In order to analyze  the global  bifurcation diagram,  we employ  a  global bifurcation theorem  via degree arguments (see \cite{KH12,R}) and also exploit  special properties of solutions to \eqref{41}.

First, we recall   a global  bifurcation result (see \cite[Theorem II.5.8]{KH12}).

 \begin{proposition}\label{t47}
  In Theorem \ref{bifur1},   the bifurcation at $(\rho_{k},0)$ is global
and  satisfies the Rabinowitz alternative, i.e., a global continuum of solutions to \eqref{41} either goes
to infinity  in $R \times \mathcal{W}$ or meets the trivial solution curve  at $(\rho_m, 0)$  for some $m \ge 1$ and $m\neq k$.
 \end{proposition}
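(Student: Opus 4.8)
The plan is to put equation \eqref{41} in the form required by the Rabinowitz global bifurcation theorem and to check its hypotheses. Recall that \eqref{47} is equivalent to \eqref{41}, which, on the axially symmetric class, is \eqref{n-ode}; accordingly we work with $\F(\rho,u)=u+\rho P_n^{-1}\bigl(1-\tfrac{e^{nu}}{\int_{\s^n}e^{nu}\,dw}\bigr)$ on $\R\times\mathcal V$. First I would verify that $\F(\rho,\cdot)$ is a compact perturbation of the identity on $\mathcal V$. Since $n>\tfrac n2$, the embedding $H^n(\s^n)\hookrightarrow C^0(\s^n)$ is compact, so $u_j\rightharpoonup u$ in $\mathcal V$ forces $e^{nu_j}\to e^{nu}$ in $C^0(\s^n)$, hence in $\mathcal W$; as $P_n$ is invertible of order $n$ on the mean-zero subspace, $P_n^{-1}\colon\mathcal W\to\mathcal V$ is bounded, and therefore $u\mapsto P_n^{-1}\bigl(1-e^{nu}/\!\int e^{nu}\bigr)$ is completely continuous from $\mathcal V$ to $\mathcal V$. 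Because $H^n(\s^n)$ is a Banach algebra and $\exp$ is entire, this map is moreover Fr\'echet differentiable, so we may write $\F(\rho,u)=(I-n\rho P_n^{-1})u+R(\rho,u)$ with $R(\rho,\cdot)$ compact and $R(\rho,u)=o(\|u\|_{\mathcal V})$ as $u\to 0$, uniformly for $\rho$ in bounded intervals.

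Next I would identify the characteristic values and check that they have odd multiplicity. Using $\int_{\s^n}u\,dw=0$ one finds $D_u\F(\rho,0)\phi=\phi-n\rho P_n^{-1}\phi$, which is non-invertible on $\mathcal V$ precisely when $n\rho=\lambda_k$ for some $k\ge 1$, i.e. $\rho=\rho_k=\lambda_k/n$ as in Theorem \ref{bifur1}; since $\lambda_k=\Gamma(n+k)/\Gamma(k)$ is strictly increasing, these values are distinct. The decisive point — and the reason for restricting to axially symmetric functions — is that, although the $k$-th eigenspace of $P_n$ on $\s^n$ has dimension $>1$ in general, its intersection with the axially symmetric class is the one-dimensional span of $C_k^{\frac{n-1}{2}}$ by \eqref{PnCk}. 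Since $P_n$, hence $nP_n^{-1}$, is self-adjoint on $\mathcal W$, geometric and algebraic multiplicities coincide, so each $\rho_k$ is a characteristic value of $nP_n^{-1}$ of algebraic multiplicity one, in particular odd.

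With these facts in hand the Rabinowitz alternative \cite{R} (see also \cite[Theorem II.5.8]{KH12}) applies: the connected component $\mathcal{C}_k$ of $\mathcal{S}$ containing $(\rho_k,0)$ is either unbounded in $\R\times\mathcal V$ or contains a second bifurcation point $(\rho_m,0)$ with $m\neq k$. To phrase the first alternative in $\R\times\mathcal W$ as stated, I would invoke an a priori bound for solutions of \eqref{41}: with $\rho$ restricted to a bounded set, the one-sided derivative estimates of Lemma \ref{Gj-estimate-lemma}, together with ODE regularity for \eqref{n-ode}, the normalization of $\gamma$, and the mean-zero constraint, control $\|u\|_{\mathcal V}$ in terms of $\|u\|_{\mathcal W}$ and $\rho$; hence any component bounded in $\R\times\mathcal W$ is bounded in $\R\times\mathcal V$, so unboundedness in $\R\times\mathcal V$ propagates to $\R\times\mathcal W$. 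I expect this last a priori estimate — turning the upper bounds $G_j\le(2j+1)!/\alpha$ into genuine two-sided control of $\|u\|_{\mathcal V}$ — to be the only point requiring real care; the compactness and the oddness (indeed, unit) of the multiplicity are routine once the axial-symmetry reduction is used.
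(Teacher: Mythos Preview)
Your proof is correct and follows exactly the paper's approach: the paper merely cites \cite[Theorem II.5.8]{KH12} and gives no further argument, so your verification of the Rabinowitz hypotheses---compactness of the nonlinearity via the embedding $H^n(\s^n)\hookrightarrow C^0(\s^n)$, simplicity (hence odd algebraic multiplicity) of each characteristic value $\rho_k$ on the axially symmetric subspace, and self-adjointness of $P_n^{-1}$---is precisely the content that citation encapsulates. Your final paragraph on passing from unboundedness in $\R\times\mathcal V$ to unboundedness in $\R\times\mathcal W$ is a legitimate subtlety the paper glosses over; the regularity bootstrap you outline is the natural route and, as you note, the only step requiring real care.
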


 Next we state and prove  the following  more specific  global bifurcation result regarding \eqref{41}.

 \begin{theorem}\label{main-bifur}
 1)   For $k\ge 2$,  there exists a global continuum of solutions
 $\mathcal{B}^+_k  \subset \mathcal{S} \setminus \{ (\rho, 0), \rho \in \mathbb{R}\}$
  of \eqref{41}  which coincides  in a small neighborhood of $(\rho_k, 0)$ with
  $$\{ (\rho_k(\varepsilon),\varepsilon C_k^{\frac{n-1}{2}}+\varepsilon \psi_k(\varepsilon)),   \varepsilon<0\}.$$
    $\mathcal{B}^+_k $ is contained in
  $\mathcal{N}_2:= \{ (\rho, u):  \rho >  \frac{(n+1)!}n, \,  u \in L^2(-1, 1) \}$ and  is uniformly bounded in $L^2(-1, 1)$  for $\rho$ in
  any fixed  finite interval $[\rho_m, \rho_M] \subset (\frac{(n+1)!}n, \infty)$.  Furthermore, $ \mathcal{B}^+_k $  satisfies  the  improved Rabinowitz alternative, i.e.,  either $\mathcal{B}^+_k$ extends in $\rho$ to infinity  or meets the trivial solution curve at $(\rho_m, 0)$  for some $m \geq 2$.

 2)  Similarly, for $k\ge 2$,  there exists   a global continuum of solutions  $\mathcal{B}^{-}_k $  which coincides  in a small neighborhood of $(\rho_k, 0)$ with  $\{ (\rho_k(\varepsilon),\varepsilon C_k^{\frac{n-1}{2}}+\varepsilon \psi_k(\varepsilon)),   \varepsilon>0\} $.  When $k\ge 3$,  $\mathcal{B}^{-}_k $ is contained in
  $\mathcal{N}_2$  and satisfies  the  boundedness for  $\rho$ in any fixed  finite interval  $[\rho_m, \rho_M] \subset (2(n-1)!, \infty)$.  Furthermore,  the improved Rabinowitz alternative holds.

 3)  Moreover,    $\mathcal{B}^+_k=\{ u:  u(x)=v(-x),  v \in \mathcal{B}^{-}_k\} $  when $k$ is odd.

 4)  The global continuum of solutions $\mathcal{B}^{-}_2$ of \eqref{41} must be contained in the set
 $$\mathcal{N}_1:= \left\{ (\rho, u):  \rho \in \left( \frac{(n-1)! }{ \alpha^{(n)}}, \frac{(n+1)!}n\right) \supset (2(n-1)!, \frac{(n+1)!}n), \,  u \in L^2(-1, 1)\right \}.$$
    Furthermore,   $\mathcal{B}^{-}_2$ is unbounded in  $L^\infty ([-1, 1] )$,  and  there exists a sequence of $ (\rho^{(t)}, u^{(t)}) \in \mathcal{B}^{-}_2,  t =1, 2, \cdots $ such that $ \rho^{(t)} \to 2(n-1)!$ and
 $\|u^{(t)}\|_{L^\infty([-1,1])} \to \infty$.  As an immediate consequence, there is a nontrivial solution to \eqref{41} for any $\rho \in  (2(n-1)!, \frac{(n+1)!}n)$.
  \end{theorem}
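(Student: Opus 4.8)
The plan is to continue each local bifurcation branch at $(\rho_k,0)$ supplied by Theorem~\ref{bifur1} to a global continuum by Proposition~\ref{t47}, and to pin down its shape using three further facts: a priori $L^\infty$ bounds for solutions of \eqref{41} whenever $\rho$ stays in a compact set avoiding the single admissible blow-up value $2(n-1)!$; the center-of-mass identity $\int_{\mathbb S^n}e^{nu}\xi_i\,dw=0$ of Proposition~\ref{n-pro}; and the nonexistence results Theorem~\ref{n} (only the trivial solution at $\rho=\frac{(n+1)!}{n}$) and Theorem~\ref{even} (only the trivial solution for $\rho\in((n-1)!,\frac{(n-1)!}{\alpha^{(n)}}]$). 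First I would record two symmetries of \eqref{41}: invariance under $u\mapsto u+\mathrm{const}$, allowing the normalization $\gamma=\frac{\sqrt\pi\Gamma(n/2)}{\Gamma((n+1)/2)}$, and invariance under the reflection $u(x)\mapsto u(-x)$, which, via $C_k^{\frac{n-1}{2}}(-x)=(-1)^kC_k^{\frac{n-1}{2}}(x)$, interchanges the two local sub-branches at $(\rho_k,0)$ when $k$ is odd; this last point gives part~3) once the continua are constructed. Since $\rho_2'(0)<0$ by Theorem~\ref{bifur1}, I would label $\mathcal B^+_2$ the global continuation of the sub-branch $\{\varepsilon<0\}$, which leaves $(\rho_2,0)$ into $\{\rho>\rho_2\}$, and $\mathcal B^-_2$ that of $\{\varepsilon>0\}$, with analogous orientation conventions for $k\ge3$.

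The analytic core is the a priori bound: for any closed interval $[\rho_\flat,\rho_\sharp]\subset(0,\infty)$ with $2(n-1)!\notin[\rho_\flat,\rho_\sharp]$, solutions of \eqref{41} with $\rho\in[\rho_\flat,\rho_\sharp]$ are bounded in $L^\infty([-1,1])$, hence in $H^n$ by elliptic estimates for $P_n$. To prove it I would first apply Lemma~\ref{Gj-estimate-lemma} to $u$ and to $u(-\cdot)$, obtaining $(1-x^2)|u'|\le\rho/(n-1)!$, which together with the normalization of $\gamma$ gives a uniform bound for $u$ on compact subsets of $(-1,1)$; thus a blow-up sequence can concentrate only at the poles $x=\pm1$. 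A Brezis--Merle/Li--Shafrir type analysis for the $n$-th order equation \eqref{n-ode} would then show that a blowing-up sequence concentrates $\gamma$ at $\{\pm1\}$ with $\rho$ tending to a multiple of $(n-1)!$; one-pole concentration is impossible since it forces $\int_{-1}^1 x(1-x^2)^{\frac{n-2}{2}}e^{nu}\ne0$, contradicting Proposition~\ref{n-pro}, so the concentration is balanced between the two poles and $\rho\to2(n-1)!$. I expect this quantization --- transporting the second-order blow-up theory to the polyharmonic operator $P_n$ in the axially symmetric reduction, and in particular ruling out interior concentration --- to be the main obstacle.

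Granting the bound, fix $k\ge2$ (and, for $\mathcal B^-_k$, $k\ge3$). The continuum emanates from $(\rho_k,0)$ with $\rho_k\ge\rho_2=\frac{(n+1)!}{n}=(n+1)(n-1)!>2(n-1)!$, so $(\rho_2,\infty)$ contains no blow-up value and the continuum is $L^\infty$- (hence $L^2$-) bounded on every finite $\rho$-subinterval of $\mathcal N_2$; moreover, by Theorem~\ref{n}, $\rho=\frac{(n+1)!}{n}$ carries no nontrivial solution, so the continuum cannot cross into $\{\rho<\rho_2\}$ and its nontrivial part stays in $\mathcal N_2$. Proposition~\ref{t47} then leaves only two possibilities: either the continuum extends in $\rho$ to infinity, or it returns to the trivial curve at some $(\rho_m,0)$. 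The case $m=1$ is excluded: near $(\rho_1,0)$ every nontrivial solution is of the form $u=-\ln(1-ax)$ at $\rho=(n-1)!$, whose center of mass is nonzero, whereas every $(\rho,u)\in\mathcal B^\pm_k$ with $\rho\ne(n-1)!$ satisfies $\int_{\mathbb S^n}e^{nu}\xi_i\,dw=0$ by Proposition~\ref{n-pro}. This is the improved Rabinowitz alternative, and with part~3) already in hand this establishes 1) and 2).

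Finally, $\mathcal B^-_2$ leaves $(\rho_2,0)$ into $\{\rho<\rho_2\}$. By Theorem~\ref{even} the band $\rho\in((n-1)!,\frac{(n-1)!}{\alpha^{(n)}}]$ contains no nontrivial solution and no bifurcation point, so the connected set $\mathcal B^-_2\setminus\{(\rho_2,0)\}$ cannot pass through it and must stay with $\rho>\frac{(n-1)!}{\alpha^{(n)}}$; by Theorem~\ref{n} it also cannot reach $\rho\ge\frac{(n+1)!}{n}$, so $\mathcal B^-_2\subset\mathcal N_1$. Since $(\frac{(n-1)!}{\alpha^{(n)}},\frac{(n+1)!}{n})$ contains no $\rho_m$, the improved alternative forces $\mathcal B^-_2$ to be unbounded; by the a priori bound the only $\rho$ at which its $L^\infty$-norm can become unbounded is the unique quantization value $2(n-1)!$ in that interval, which yields a sequence $(\rho^{(t)},u^{(t)})\in\mathcal B^-_2$ with $\rho^{(t)}\to2(n-1)!$ and $\|u^{(t)}\|_{L^\infty([-1,1])}\to\infty$. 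By connectedness the $\rho$-projection of $\mathcal B^-_2$ is an interval containing values arbitrarily close to $\frac{(n+1)!}{n}$ and to $2(n-1)!$, hence contains $(2(n-1)!,\frac{(n+1)!}{n})$; the point of $\mathcal B^-_2$ over each such $\rho$ is nontrivial because no bifurcation point lies there. Putting $\alpha=(n-1)!/\rho$ converts this into a nonconstant solution of \eqref{n-ode} for every $\alpha\in(\frac1{n+1},\frac12)$, and the blow-up sequence gives $\alpha_m\to\frac12$ with $\|u_{\alpha_m}\|_{L^\infty([-1,1])}\to\infty$, which is precisely Theorem~\ref{nontrivial}.
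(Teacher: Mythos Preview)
The paper itself omits the proof entirely, referring to the $n=4$ case in \cite{Gui20}; your outline --- global Rabinowitz continuation of the local branches of Theorem~\ref{bifur1} via Proposition~\ref{t47}, confinement of the continua by the uniqueness results Theorems~\ref{n} and~\ref{even}, exclusion of the bifurcation point $(\rho_1,0)$ through the center-of-mass identity of Proposition~\ref{n-pro}, and an a~priori $L^\infty$ bound with $\rho=2(n-1)!$ as the unique admissible blow-up level --- is precisely the strategy of \cite{Gui20}, and is correct in structure and in its use of the ingredients proved in the present paper.

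Two small remarks. First, Lemma~\ref{Gj-estimate-lemma} with $j=0$ gives only the one-sided bound $G'\le 1/\alpha$; applying it to $u(-\cdot)$ yields the \emph{same} inequality (since $\tilde G(x)=-G(-x)$ gives $\tilde G'(x)=G'(-x)$), not a lower bound on $G'$. The two-sided bound on $G=(1-x^2)u'$ comes instead from $G(\pm1)=0$ together with $G'\le 1/\alpha$, which gives $-(1-x)/\alpha\le G(x)\le (1+x)/\alpha$, hence $|G|\le 2/\alpha$. Second, as you correctly flag, the substantive analytic work you are deferring --- the polyharmonic blow-up/quantization analysis for \eqref{n-ode} showing that concentration must be balanced at the two poles and forces $\rho\to 2(n-1)!$ --- is carried out in \cite{Gui20} for $n=4$ and is asserted, but not written out, in the present paper either; so on this point your proposal and the paper are on equal footing.
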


 \begin{proof}
  The proof is similar to that of  the case $n=4$ in \cite{Gui20}. So we omit it.
 \end{proof}

\begin{proof}[Proof of Theorem \ref{nontrivial}]
 Theorem \ref{nontrivial} follows immediately from Theorem \ref{main-bifur}.  This leads to the existence of a nontrivial solution to \eqref{n-ode} for $\alpha \in (\frac1{n+1}, \frac12)$.
\end{proof}

\section{Appendix A: Proof of \eqref{n=6}}
\vskip 2mm
\noindent\par

In this appendix, we compute  the term $ \int_{-1}^1(1-x^2)^2G^2 [(1-x^2)^2G]^{(5)}$.

First,
\begin{equation}\label{3-1}
  \begin{aligned}
    &\quad \int_{-1}^1(1-x^2)^2G^2 [(1-x^2)^2G]^{(5)}= \int_{-1}^1(1-x^2)^4G^2 G^{(5)}-
    20\int_{-1}^1x(1-x^2)^3G^2 G^{(4)}\\
    &-40\int_{-1}^1(1-3x^2)(1-x^2)^2G^2 G^{(3)}+240\int_{-1}^1x(1-x^2)^2G^2 G^{''}
    +120\int_{-1}^1 (1-x^2)^2G^2 G^{'}\\
    &:=\sum_{i=1}^5 I_i.
      \end{aligned}
\end{equation}

Let
\begin{equation*}
\begin{aligned}
  G_5&=\int_{-1}^1(1-x^2)^4(G^3)^{(5)}=3\int_{-1}^1(1-x^2)^4G^2 G^{(5)}+30\int_{-1}^1(1-x^2)^4G G'G^{(4)}\\
     &+60\int_{-1}^1(1-x^2)^4 GG''G^{(3)}+60\int_{-1}^1(1-x^2)^4 (G')^2G^{(3)}+90\int_{-1}^1(1-x^2)^4 G'(G'')^2\\
&:=\sum_{i=1}^5 G_5i,
      \end{aligned}
\end{equation*}
\begin{equation*}
\begin{aligned}
   G_4&=\int_{-1}^1x(1-x^2)^3(G^3)^{(4)}=3\int_{-1}^1x(1-x^2)^3G^2 G^{(4)}+24\int_{-1}^1x(1-x^2)^3G G'G^{(3)}\\
     &+18\int_{-1}^1x(1-x^2)^3 G(G'')^2+36\int_{-1}^1x(1-x^2)^3 (G')^2G^{''}\\
&:=\sum_{i=1}^4 G_4i,
      \end{aligned}
\end{equation*}
$X_1=(1-7x^2)(1-x^2)^2$,  $X_2=(1-3x^2)(1-x^2)^2$   and
\begin{equation*}
  \begin{aligned}
  G_{3}^{(j)}&=\int_{-1}^1X_j(G^3)^{(3)}=3\int_{-1}^1X_j G^2 G^{(3)}+18\int_{-1}^1X_j GG'G''+
     6\int_{-1}^1X_j(G')^3 \\
&:=\sum_{i=1}^3 G_{3}^{(j)}i,\quad j=1,2.
      \end{aligned}
\end{equation*}
Here, we neglect the coefficients  before  $G_5i,\ G_4i$ and $G_3i$. For example,
$G_54=\int_{-1}^1(1-x^2)^4 (G')^2G^{(3)}.$

\par
After integration by parts,
\begin{equation}\label{G_5}
   \begin{aligned}
     & \int_{-1}^1(1-x^2)^4G^2 G^{(5)}=-\int_{-1}^1[(1-x^2)^4G^2]'G^{(4)}=8G_41-2G_52;\\
     & \int_{-1}^1(1-x^2)^4G G'G^{(4)}=8G_42-(G_53+G_54);\\
     &\int_{-1}^1(1-x^2)^4 GG''G^{(3)}=4G_43-\frac12 G_55;\\
     &\int_{-1}^1(1-x^2)^4 (G')^2G^{(3)}=8G_44-2G_55.
   \end{aligned}
\end{equation}
Similarly,
\begin{equation}\label{G_4}
   \begin{aligned}
     & \int_{-1}^1x(1-x^2)^3G^2 G^{(4)}=-G_{3}^{(1)}1-2G_42;\\
     &\int_{-1}^1x(1-x^2)^3G G'G^{(3)}=-G_{3}^{(1)}2-G_43-G_44\\
     &\int_{-1}^1x(1-x^2)^3 G(G'')^2=-G_{3}^{(1)}2-G_42-G_44\\
     &\int_{-1}^1x(1-x^2)^3 (G')^2G^{''}=-\frac13G_{3}^{(1)}3\\
   \end{aligned}
\end{equation}
Then
\begin{equation*}
\begin{aligned}
  &\quad I_1+I_2+a_1G_5+a_2G_4\\
  &=(1+3a_1)G_51+30a_1G_52+60a_1(G_53+G_54)+90a_1G_55\\
  &+(3a_2-20)G_41+24a_2G_42+18a_2G_43+36a_2G_44\\
  &=(24a_1-2)G_52+...+(24a_1+3a_2-12)G_41+...\\
  &=(24a_1-2)[8G_42-(G_53+G_54)]+60a_1(G_53+G_54)+90a_1G_55
  +(24a_1+3a_2-12)G_41+...\\
  &=(36a_1+2)(G_53+G_54)+90a_1G_55 +(24a_1+3a_2-12)G_41+(24a_2+192a_1-16)G_42+...\\
  &=-5G_55 +(24a_1+3a_2-12)G_41+(24a_2+192a_1-16)G_42+(144a_1+18a_2+8)(G_43+2G_44).\\
\end{aligned}
\end{equation*}
Thus,
\begin{equation*}
\begin{aligned}
  &\quad I_1+I_2+a_1G_5+a_2G_4+5G_55\\
  &=(24a_1+3a_2-12)G_41+(24a_2+192a_1-16)G_42+(144a_1+18a_2+8)(G_43+2G_44)\\
  &=-(24a_1+3a_2-12)G_{3}^{(1)}1+(144a_1+18a_2+8)(G_42+G_43+2G_44)\\
  &=-(24a_1+3a_2-12)G_{3}^{(1)}1-(144a_1+18a_2+8)G_{3}^{(1)}2
  +(144a_1+18a_2+8)G_44.
\end{aligned}
\end{equation*}
Let $t=24a_1+3a_2$, then we have
\begin{equation*}
\begin{aligned}
  &\quad I_1+I_2+a_1G_5+a_2G_4+5G_55+\frac{t-12}{3}G_{3}^{(1)}\\
 &=6(t-12)G_{3}^{(1)}2-(6t+8)G_{3}^{(1)}2+2(t-12)G_{3}^{(1)}3
  -\frac{6t+8}{3}G_{3}^{(1)}3\\
  &=-80G_{3}^{(1)}2
  -\frac{80}{3}G_{3}^{(1)}3.
\end{aligned}
\end{equation*}
Notice that
 \begin{equation}\label{G-3}
   \begin{aligned}
     &\int_{-1}^1X_j G^2 G^{(3)}=-\int_{-1}^1X_j'G^2G''-2\int_{-1}^1X_jGG'G''=-\int_{-1}^1X_j'G^2G''
     -2G^{(j)}_32\\
     &\int_{-1}^1X_j GG'G''=-\frac12\int_{-1}^1X_jG(G')^2-\frac12G^{(j)}_33. \\
   \end{aligned}
\end{equation}
Then
\begin{equation*}
\begin{aligned}
   &\quad I_3-80G_{3}^{(1)}2-\frac{80}{3}G_{3}^{(1)}3
   =40\int_{-1}^1X_2'G^2G''+80\int_{-1}^1[X_2-X_1]GG'G''-\frac{80}{3}G_{3}^{(1)}3\\
  &=40\int_{-1}^1X_2'G^2G''+320\int_{-1}^1x^2(1-x^2)^2GG'G''-\frac{80}{3}G_{3}^{(1)}3.
\end{aligned}
\end{equation*}
Thus,
\begin{equation*}
\begin{aligned}
   &\quad I_3+I_4-80G_{3}^{(1)}2-\frac{80}{3}G_{3}^{(1)}3
   =I_4+\int_{-1}^1 40X_2'G^2G''+320\int_{-1}^1x^2(1-x^2)^2GG'G''
   -\frac{80}{3}G_{3}^{(1)}3\\
   &=80\int_{-1}^1x(1-x^2)^2[3(1-x^2)-(9x^2-5)]G^2G''-320\int_{-1}^1x(1-x^2)(1-3x^2)G(G')^2\\
   &-\int_{-1}^1(1-x^2)^2\left[160x^2-\frac{80}{3}(1-7x^2)\right](G')^3\\
   &=160\int_{-1}^1x(1-x^2)(3x^2-1)(G^2G''+2G(G')^2)-\frac{80}{3}
   \int_{-1}^1(1-x^2)^3(G')^3\\
   &=\frac{160}{3}
   \int_{-1}^1x(1-x^2)(3x^2-1)(G^3)''-\frac{80}{3}
   \int_{-1}^1(1-x^2)^3(G')^3.
\end{aligned}
\end{equation*}
It is easy to see that
\begin{equation*}
\begin{aligned}
  \sum_{i=1}^5I_i&=-a_1\int_{-1}^1(1-x^2)^4(G^3)^{(5)}-a_2\int_{-1}^1x(1-x^2)^3(G^3)^{(4)}
  -\frac{t-12}{3}\int_{-1}^1
  (1-7x^2)(1-x^2)^2(G^3)^{(3)}\\
   &-\frac{160}{3}
   \int_{-1}^1x(1-x^2)(3x^2-1)(G^3)''+40\int_{-1}^1(1-x^2)^2(G^3)'\\
   &-5\int_{-1}^1(1-x^2)^4 G'(G'')^2-\frac{80}{3}
   \int_{-1}^1(1-x^2)^3(G')^3\\
   &=-5\int_{-1}^1(1-x^2)^4 G'(G'')^2-\frac{80}{3}
   \int_{-1}^1(1-x^2)^3(G')^3.
\end{aligned}
\end{equation*}

\section{Appendix B: Proof of \eqref{n=8}}

Compute $\int_{-1}^1(1-x^2)^3 G^2 [(1-x^2)^3 G]^{(7)}$. First,
\begin{equation}\label{8-1}
   \begin{aligned}
    &\quad\int_{-1}^1(1-x^2)^3 G^2 [(1-x^2)^3 G]^{(7)}= \int_{-1}^1(1-x^2)^6G^2 G^{(7)} -42\int_{-1}^1x(1-x^2)^5G^2 G^{(6)}\\
     &-126 \int_{-1}^1(1-x^2)^4\left(1-5 x^2\right) G^2 G^{(5)}+840 \int_{-1}^1x(1-x^2)^3\left(3-5 x^2\right) G^2 G^{(4)}\\
     &+2520\int_{-1}^1(1-x^2)^3\left(1-5 x^2\right) G^2 G^{(3)}-3\times 7!\int_{-1}^1x(1-x^2)^3 G^2 G''-7!\int_{-1}^1(1-x^2)^3 G^2 G'\\
    &:=\sum_{i=1}^7 I_i
      \end{aligned}
\end{equation}

Let
\begin{equation*}
\begin{aligned}
  G_7&=\int_{-1}^1(1-x^2)^6(G^3)^{(7)}=3\int_{-1}^1(1-x^2)^6G^2 G^{(7)}+42\int_{-1}^1(1-x^2)^6G G'G^{(6 )}\\
     &+126\int_{-1}^1(1-x^2)^6 GG''G^{(5)}+126\int_{-1}^1(1-x^2)^6 (G')^2G^{(5)}+210\int_{-1}^1(1-x^2)^6 GG^{(3)}G^{(4)}\\
     &+630\int_{-1}^1(1-x^2)^6 G'G''G^{(4)}+630\int_{-1}^1(1-x^2)^6 (G'')^2G^{(3)}+420\int_{-1}^1(1-x^2)^6 G'(G^{(3)})^2\\
&:=\sum_{i=1}^8G_7i;
      \end{aligned}
\end{equation*}
and
\begin{equation*}
\begin{aligned}
   G_6&=\int_{-1}^1x(1-x^2)^5(G^3)^{(4)}=3\int_{-1}^1x(1-x^2)^5G^2 G^{(6)}+36\int_{-1}^1x(1-x^2)^5G G'G^{(5)}\\
     &+90\int_{-1}^1x(1-x^2)^5 GG''G^{(4)}+90\int_{-1}^1x(1-x^2)^5 (G')^2G^{(4)}+60\int_{-1}^1x(1-x^2)^5 G(G^{(3)})^2\\
     &+360\int_{-1}^1x(1-x^2)^5 G'G''G^{(3)}+90\int_{-1}^1x(1-x^2)^5  (G'')^3\\
&:=\sum_{i=1}^7 G_6i.
      \end{aligned}
\end{equation*}
We consider these functions
\begin{equation}\label{X^ji}
\begin{aligned}
     X_5^{(1)}&=[x(1-x^2)^5]'=(1-x^2)^4(1-11x^2),\quad  X_5^{(2)}=(1-x^2)^4(1-5x^2);\\
     X_4^{(s)}&=\left[X_5^{(j)}\right]', \ s=1,2;\qquad  X_4^{(3)}=x(1-x^2)^3(3-5x^2);\\
     X_3^{(s)}&=\left[X_4^{(j)}\right]', \ s=1,2,3;\qquad  X_3^{(4)}=(1-x^2)^3(1-5x^2).
\end{aligned}
\end{equation}
Then  define
\begin{equation*}
  \begin{aligned}
  G_5^{(j)}&=\int_{-1}^1 X_5^{(j)} (G^3)^{(5)}=3\int_{-1}^1X_5^{(j)}G^2 G^{(5)}+30\int_{-1}^1X_5^{(j)}G G'G^{(4)}\\
     &+60\int_{-1}^1X_5^{(j)} GG''G^{(3)}+60\int_{-1}^1X_5^{(j)} (G')^2G^{(3)}+90\int_{-1}^1X_5^{(j)} G'(G'')^2\\
&:=\sum_{i=1}^5 G_5^{(j)}i,\quad j=1,2;
      \end{aligned}
\end{equation*}
\begin{equation*}
\begin{aligned}
   G_4^{(j)}&=\int_{-1}^1X_4^{(j)}(G^3)^{(4)}=3\int_{-1}^1X_4^{(j)}G^2 G^{(4)}+24\int_{-1}^1X_4^{(j)}G G'G^{(3)}\\
     &+18\int_{-1}^1X_4^{(j)} G(G'')^2+36\int_{-1}^1X_4^{(j)} (G')^2G^{''}\\
&:=\sum_{i=1}^4 G_4i, \quad j=1,2,3;
      \end{aligned}
\end{equation*}
  and
\begin{equation*}
  \begin{aligned}
 G_{3}^{(j)}&=\int_{-1}^1X_3^{(j)}(G^3)^{(3)}=3\int_{-1}^1X_3^{(j)} G^2 G^{(3)}+18\int_{-1}^1X_3^{(j)}GG'G''+
     6\int_{-1}^1X_3^{(j)}(G')^3\\
&:=\sum_{i=1}^3 G_{3}^{(j)}i, \quad j=1,2,3,4.
      \end{aligned}
\end{equation*}
As in Appendix A,   we neglect the coefficients  before  $G_7i,\ G_6i,\dots \ G_3^{(j)}i$.
\par
After integration by parts, one has
\begin{equation}\label{G-7}
   \begin{aligned}
     &G_71=\int_{-1}^1(1-x^2)^6G^2 G^{(7)}=-\int_{-1}^1[(1-x^2)^6G^2]'G^{(4)}=12G_61-2G_72;\\
     & G_72=12G_62-(G_73+G_74);\quad G_73=12G_63-(G_75+G_76);\\
     &G_74=12G_64-2 G_76;\quad G_75=6G_65-\frac12 G_78;\\
     &G_76=12G_66-(G_77+G_78);\quad G_77=4 G_67;
   \end{aligned}
\end{equation}
Similarly,
\begin{equation}\label{G-6}
   \begin{aligned}
     &-G_61=G_5^{(1)}1+2G_62;\quad &-G_62=G_5^{(1)}2+(G_63+G_64);\\
     &-G_63=G_5^{(1)}3+(G_65+G_66);  &-G_64=-G_5^{(1)}4+2G_66;\\
    &-G_65=G_5^{(1)}3+(G_63+G_66); &-G_66=\frac12G_5^{(1)}5+\frac12G_67;\\
   \end{aligned}
\end{equation}
\begin{equation}\label{G-5}
   \begin{aligned}
     &-G_5^{(j)}1=G_4^{(j)}1+2G_5^{(j)}2;\quad &-G_5^{(j)}2=G_4^{(j)}2+(G_5^{(j)}3+G_5^{(j)}4);\\
    &-G_5^{(j)}3=\frac12G_4^{(j)}3+\frac12G_5^{(j)}5;
    \quad &-G_5^{(j)}4=G_4^{(j)}4+2G_5^{(j)}5;\\
   \end{aligned}
\end{equation}
\begin{equation}\label{G-4}
   \begin{aligned}
     -G_4^{(j)}1=G_3^{(j)}1+2G_4^{(j)}2;\quad  -G_4^{(j)}2-G_4^{(j)}3=G_3^{(j)}2+G_4^{(j)}4;\quad
   -G_4^{(j)}4=\frac13G_3^{(j)}3.
   \end{aligned}
\end{equation}
By these relations, we calculate
\begin{equation*}
\begin{aligned}
  &\quad I_1+I_2+a_1G_7+a_2G_6\\
  &=(1+3a_1)G_71+42a_1G_72+126a_1(G_73+G_74)+210a_1G_75
  +630a_1(G_76+G_77)+420a_1G_78\\
  &+(3a_2-42)G_61+36a_2G_62+90a_2(G_63+G_64)+60a_2G_65
  +360a_2G_66+90a_2G_67 \\
  &=(36a_1-2)G_72+...+(36a_1+3a_2-30)G_61+...\\
  &=(90a_1+2)(G_73+G_74)+...(360a_1+30a_2+36)G_62+...-(36a_1+3a_2-30)G^{(1)}_51\\
  &=(120a_1-2)(G_75+3G_76)+630a_1G_77+420a_1G_78+(720a_1+60a_2-12)(G_63+G_64)\\
  &+60a_2G_65
  +360a_2G_66+90a_2G_67-(36a_1+3a_2-30)G^{(1)}_51-(360a_1+30a_2+36)G^{(1)}_52\\
  &=(270a_1+6)G_77+7G_78+3(720a_1+60a_2-12)G_66+90a_2G_67-(36a_1+3a_2-30)G^{(1)}_51\\
  &-(360a_1+30a_2+36)G^{(1)}_52-(720a_1+60a_2-12)(G^{(1)}_53+G^{(1)}_54)\\
  &=7G_78+42G_67-(36a_1+3a_2-30)G^{(1)}_51-...-(1080a_1+90a_2-18)G^{(1)}_55.
\end{aligned}
\end{equation*}
Let
\begin{equation*}
  II=I_1+I_2+a_1G_7+a_2G_6-7G_78-42G_67 \mbox{ and }12a_1+a_2=t.
\end{equation*}
Then it follows from  \eqref{G-5}  that
\begin{equation*}
  \begin{aligned}
    II&=-(3t-30)G^{(1)}_51
  -( 30t+36)G^{(1)}_52-(60t-12)(G^{(1)}_53+G^{(1)}_54)-(90t-18)G^{(1)}_55\\
  &=(3t-30)G^{(1)}_41-(24t+96)G^{(1)}_52-...\\
  &=(3t-30)G^{(1)}_41+(24t+96)G^{(1)}_42-(36t-108)(G^{(1)}_53+G^{(1)}_54)
  -(90t-18)G^{(1)}_55\\
  &=(3t-30)G^{(1)}_41+(24t+96)G^{(1)}_42+18(t-3)(G^{(1)}_43+G^{(1)}_44)
  -252G^{(1)}_55.
  \end{aligned}
\end{equation*}
 Joint with \eqref{G-4}, we further have
\begin{equation*}
  \begin{aligned}
    II+252G^{(1)}_55&=(3t-30)G^{(1)}_41+(24t+96)G^{(1)}_42+18(t-3)(G^{(1)}_43+G^{(1)}_44)\\
    &=-3(t-10)G^{(1)}_31+6(3t+26)G^{(1)}_42+18(t-3)(G^{(1)}_43+G^{(1)}_44)\\
    &=-3(t-10)G^{(1)}_31-6(3t+26)G^{(1)}_32-210G^{(1)}_43+6(3t-44)G^{(1)}_44\\
    &=-3(t-10)G^{(1)}_31-6(3t+26)G^{(1)}_32-2(3t-44)G^{(1)}_33-210G^{(1)}_43.
  \end{aligned}
\end{equation*}

Repeating the above arguments, we find
\begin{equation*}
\begin{aligned}
 -\frac1{126}I_3&= G^{(2)}_51=G^{(2)}_31-4G^{(2)}_32+2G^{(2)}_33-5G^{(2)}_43-5G^{(2)}_55;\\
 \frac1{840}I_4&=  G^{(3)}_41=-G^{(3)}_31+2G^{(3)}_32-\frac23G^{(3)}_33+2G^{(3)}_43.
\end{aligned}
\end{equation*}

Then we compute
\begin{equation*}
    \begin{aligned}
     &\quad  II+I_3+I_4+I_5+tG_3^{(1)}\\
     &=II-126G^{(2)}_51+840 G^{(3)}_41+2520 G^{(4)}_31+tG_3^{(1)}\\
      &=-252G^{(1)}_55+630G^{(2)}_55-210G^{(1)}_43+630G^{(2)}_43+1680G^{(3)}_43\\
      &+\left(30G^{(1)}_31 -126G^{(2)}_31-840G^{(3)}_31+2520G^{(4)}_31\right)
    +\left(1680G^{(3)}_32-156G^{(1)}_32+504G^{(2)}_32\right) \\
      &+
      \left(88G^{(1)}_33-252G^{(2)}_33 -1260G^{(3)}_33\right)\\
      &:=III_1+III_2+\sum_{i=1}^3III_3^{(i)}.
    \end{aligned}
\end{equation*}
By \eqref{X^ji}, we derive
\begin{equation*}
  \begin{aligned}
    III_1&=-252G^{(1)}_55+630G^{(2)}_55\\
    &=\int_{-1}^1\left[630 \left(1-5 x^2\right) \left(1-x^2\right)^4-252 \left(1-11 x^2\right) \left(1-x^2\right)^4\right]G'(G'')^2\\
    &=378\int_{-1}^1 \left(1-x^2\right)^5G'(G'')^2;
  \end{aligned}
\end{equation*}
\begin{equation*}
  \begin{aligned}
    III_2&=-210G^{(1)}_43+630G^{(2)}_43+1680G^{(3)}_43= 0;
  \end{aligned}
\end{equation*}
and
\begin{equation*}
  \begin{aligned}
    III_3^{(1)}&= 30G^{(1)}_31 -126G^{(2)}_31-840G^{(3)}_31+2520G^{(4)}_31=72\int_{-1}^1(25 x^4-48 x^2+19)(1-x^2)^2G^2 G^{(3)}\\
    &:=72\int_{-1}^1 X_3^{(5)}G^2 G^{(3)};\\
     III_3^{(2)}&=1680G^{(3)}_32-156G^{(1)}_32+504G^{(2)}_32=216\int_{-1}^1(15 x^4-26x^2+3)(1-x^2)^2GG'G''\\
     &:=216\int_{-1}^1X_3^{(6)}(1-x^2)^2GG'G'';\\
      III_3^{(3)}&=88G^{(1)}_33-252G^{(2)}_33 -1260G^{(3)}_33=72\int_{-1}^1(15 x^4-26x^2+3)(1-x^2)^2(G')^3\\
     &:=72\int_{-1}^1X_3^{(6)}(1-x^2)^2(G')^3.\\
  \end{aligned}
\end{equation*}
 It follows from \eqref{G-3} that
\begin{equation*}
  \begin{aligned}
    III_3&:=\sum_{i=1}^3III_3^{(i)}=-72\int_{-1}^1\left(X_3^{(5)}\right)'G^2 G''+\int_{-1}^1 \left(216X_3^{(5)}-144X_3^{(6)}\right)GG'G''\\
&+72\int_{-1}^1X_3^{(6)}(1-x^2)^2(G')^3\\
&=-72\int_{-1}^1\left(X_3^{(5)}\right)'G^2G''
-288\int_{-1}^1X_2G(G')^2+1260\int_{-1}^1(1-x^2)^4(G')^3,
  \end{aligned}
\end{equation*}
where $X_2=x(1-x^2)(15 5x^4-16x^2+19)$.
Then we consider
\begin{equation*}
  \begin{aligned}
      III_3+I_6+I_7&=-144\int_{-1}^1X_2\left[G^2G''+2G(G')^2\right]
      -5040\int_{-1}^1(1-x^2)^3G^2G'+1260\int_{-1}^1(1-x^2)^4(G')^3\\
      &=-48\int_{-1}^1X_2(G^3)'-1680\int_{-1}^1(1-x^2)^3(G^3)'
      +1260\int_{-1}^1(1-x^2)^4(G')^3.
  \end{aligned}
\end{equation*}

Put these results together,  we conclude that
\begin{equation*}
  \begin{aligned}
      \sum_{i=1}^7I_i&=-a_1\int_{-1}^1(1-x^2)^6(G^3)^{(7)}
      -a_2\int_{-1}^1x(1-x^2)^5(G^3)^{(6)}
  -t\int_{-1}^1
  X_3^{(1)}(G^3)^{(3)}\\
  &-48\int_{-1}^1X_2(G^3)'-1680\int_{-1}^1(1-x^2)^3(G^3)'+7\int_{-1}^1(1-x^2)^6 G'(G^{(3)})^2+42\int_{-1}^1x(1-x^2)^5  (G'')^3\\
  &+378\int_{-1}^1 \left(1-x^2\right)^5G'(G'')^2+1260\int_{-1}^1(1-x^2)^4(G')^3\\
  &=7\int_{-1}^1(1-x^2)^6 G'(G^{(3)})^2+42\int_{-1}^1x(1-x^2)^5  (G'')^3+378\int_{-1}^1 \left(1-x^2\right)^5G'(G'')^2\\
  &+1260\int_{-1}^1(1-x^2)^4(G')^3.\\
  \end{aligned}
\end{equation*}
Note that
\begin{equation*}
\begin{aligned}
  &\quad  \int_{-1}^1[(1-x^2)G]^{(3)}(1-x^2)^5 (G'')^2=\int_{-1}^1(1-x^2)^6 (G'')^2G^{(3)}\\
  &-6\int_{-1}^1x(1-x^2)^5  (G'')^3-6\int_{-1}^1(1-x^2)^5 G'(G'')^2\\
  &=-2\int_{-1}^1x(1-x^2)^5  (G'')^3-6\int_{-1}^1(1-x^2)^5 G'(G'')^2
\end{aligned}
\end{equation*}
Therefore,
\begin{equation*}
  \begin{aligned}
   &\quad\int_{-1}^1(1-x^2)^3 G^2 [(1-x^2)^3 G]^{(7)}=1260
   \int_{-1}^1(1-x^2)^4(G')^3+252\int_{-1}^1(1-x^2)^5 G'(G'')^2\\
   &+7\int_{-1}^1(1-x^2)^6G'(G^{(3)})^2 -21\int_{-1}^1[(1-x^2)G]^{(3)}(1-x^2)^5 (G'')^2.
\end{aligned}
\end{equation*}

\vskip4mm
$\mathbf{Acknowledgement}$
This research is partially supported by  NSF research grants DMS-1601885 and DMS-1901914.

\end{document}